\documentclass{article}

\usepackage{amsmath}
\usepackage{amsthm}
\usepackage{amssymb}
\usepackage{amscd}
\usepackage{txfonts}
\usepackage[dvipdfm]{graphicx}
\numberwithin{equation}{section}

\newtheorem{thm}{Theorem}[section]
\newtheorem{lem}{Lemma}[section]
\newtheorem{pro}{Proposition}[section]

\newtheorem{Def}{Definition}[section]

\begin{document}

\title{Hydrodynamic limit for weakly asymmetric simple exclusion processes in crystal lattices}
\author{Ryokichi Tanaka
\footnote{Department of Mathematics, Kyoto University, Kitashirakawa Oiwake-cho, Sakyo-ku, Kyoto, 606-8502, Japan}
\footnote{E-mail: rtanaka@math.kyoto-u.ac.jp}
\footnote{Present address: Advanced Institute for Materials Research and Mathematical Institute, Tohoku University, 2-1-1 Katahira, Aoba-ku, Sendai, 980-8577, Japan}
\footnote{E-mail: rtanaka@wpi-aimr.tohoku.ac.jp}}
\date{}

\maketitle

\begin{abstract}
We investigate the hydrodynamic limit for weakly asymmetric simple exclusion processes in crystal lattices.
We construct a suitable scaling limit by using a discrete harmonic map.
As we shall observe,
the quasi-linear parabolic equation in the limit is defined on a flat torus and depends on both the local structure of the crystal lattice and the discrete harmonic map.
We formulate the local ergodic theorem on the crystal lattice by introducing the notion of local function bundle, which is a family of local functions on the configuration space.
The ideas and methods are taken from the discrete geometric analysis to these problems.
Results we obtain are extensions of ones by Kipnis, Olla and Varadhan to crystal lattices. 
\end{abstract}

\section{Introduction}

The purpose of this paper is to discuss the hydrodynamic limit for interacting particle systems in the crystal lattice.
Problems of the hydrodynamic limit have been studied intensively in the case where the underlying space is the Euclidean lattice.
We extend problems to the case where the underlying space has geometric structures: the {\it crystal lattice}.
The crystal lattice is a generalization of classical lattice, the square lattice, the triangular lattice, the hexagonal lattice, the Kagom\'{e} lattice (Figure\ref{crystals}) and the diamond lattice. 
Before explaining difficulties for this extension and entering into details, we motivate to study these problems.

\begin{figure}[h]
	\begin{center}
	\includegraphics[width=129mm]{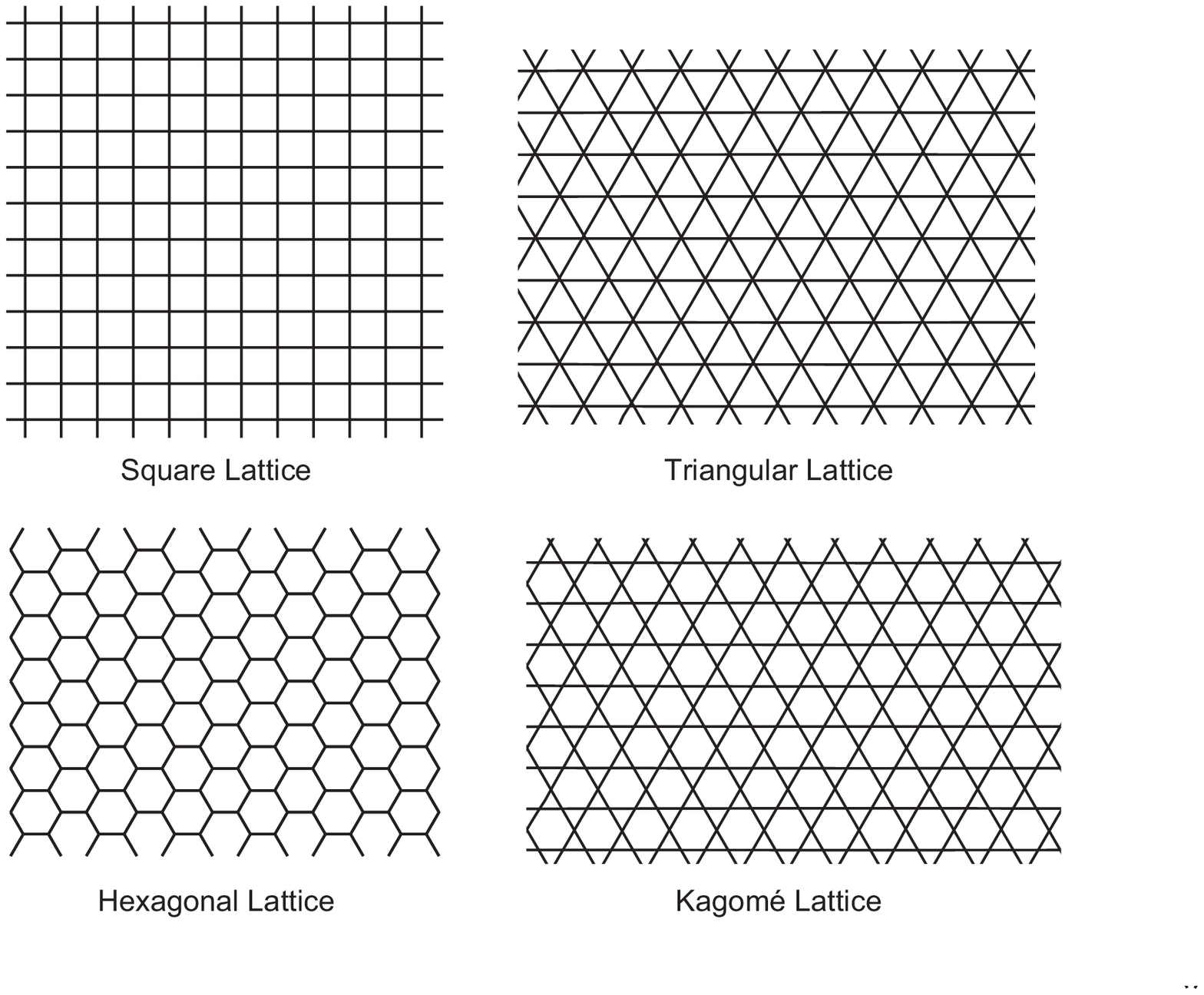}
	\end{center}
	\caption{Crystal Lattices}
	\label{crystals}
\end{figure}

There are many problems on the scaling limit of interacting particle systems, which have their origins in the statistical mechanics and the hydrodynamics.
(See \cite{KL}, \cite{S} and references therein.)
The hydrodynamic limit for the exclusion process is one of the most studied models in this context.
Here we give only one example for exclusion processes in the integer lattice, which is a prototype of our results, due to Kipnis, Olla and Varadhan (\cite{KOV}).
From the view point of physics and mathematics, it is natural to ask for the scaling limit of interacting particle systems evolving in more general spaces and to discuss the relationship between macroscopic behaviors of particles and geometric structures of the underlying spaces.
In this paper, we deal with the crystal lattice, which is the simplest extension of the Euclidean lattice $\mathbb{Z}^{d}$.
Although the crystal lattice has periodic global structures, it has inhomogeneous local structures.

On the other hand, crystal lattices have been studied in view of discrete geometric analysis by Kotani and Sunada (\cite{KS01}, \cite{KS02}, \cite{KS03}, and the expository article \cite{Su}).
They formulate a crystal lattice as an abelian covering graph, and then
they study random walks on crystal lattices and discuss the relationship between asymptotic behaviors of random walks and geometric structures of crystal lattices.
In \cite{KS02}, they introduce the {\it standard realization}, which is a discrete harmonic map from a crystal lattice into a Euclidean space to characterize an equilibrium configuration of crystals.
In \cite{KS01}, they discuss the relationship between the {\it Albanese metric} which is introduced into the Euclidean space, associated with the standard realization and the central limit theorem for random walks on the crystal lattice.
Considering exclusion processes on the crystal lattice,
one is interested to ask what geometric structures appear in the case where the interactions depend on the local structures.

Given a graph, the exclusion process  on it describes the following dynamics:
Particles attempt to jump to nearest neighbor sites, however, they are forbidden to jump to sites which other particles have already occupied.
So, particles are able to jump to nearest neighbor vacant sites.
Then, the problem of the hydrodynamic limit is to capture the collective behavior of particles via the scaling limit.
If we take a suitable scaling limit of space and time,
then we observe that the density of particles is governed by a partial differential equation as a macroscopic model.
Here it is necessary to construct a suitable scaling limit for a graph and to know some analytic properties of the limit space.

A crystal lattice is defined as an infinite graph $X$ which admits a free action of a free abelian group $\Gamma$ with a finite quotient graph $X_{0}$.
We construct a scaling limit of a crystal lattice as follows:
Let $N$ be a positive integer. 
Take a finite index subgroup $N\Gamma$ of $\Gamma$, which is isomorphic to $N\mathbb{Z}^{d}$ when $\Gamma$ is isomorphic to $\mathbb{Z}^{d}$. 
Then we take the quotient of $X$ by $N\Gamma$-action: $X_{N}$.
We call this finite quotient graph $X_{N}$ the {\it $N$-scaling finite graph}.
The quotient group $\Gamma_{N}:=\Gamma/N\Gamma$ acts freely on $X_{N}$.
Here we consider exclusion processes on $X_{N}$.
To observe these processes in the continuous space, we embed $X_{N}$ into a torus.
We construct an embedding map $\Phi_{N}$ from $X_{N}$ into a torus by using a harmonic map $\Phi$ in the discrete sense in order that the image $\Phi_{N}(X_{N})$ converges to a torus as $N$ goes to the infinity.
(Here the convergence of metric spaces is verified by using the Gromov-Hausdorff topology, however, we do not need this notion in this paper.)
Then we obtain exclusion processes embedded by $\Phi_{N}$ into the torus.

In this paper, we deal with the simplest case among exclusion processes: {\it the symmetric simple exclusion process} and its perturbation: {\it the weakly asymmetric simple exclusion process}.
In the latter case, we obtain a heat equation with nonlinear drift terms on torus as the limit of process of empirical density  (Theorem\ref{main} and Examples below).
We observe that the diffusion coefficient matrices and nonlinear drift terms can be computed by data of a finite quotient graph $X_{0}$ and a harmonic map $\Phi$. (See also examples in Section \ref{harmonic}.)
The hydrodynamic limit for these processes on the crystal lattice is obtained as an extension of the one on $\mathbb{Z}^{d}$.
So, first, we review the outline of the proof for $\mathbb{Z}^{d}$, following the method by Guo, Papanicolaou and Varadhan in \cite{GPV}.
Since the lattice $\mathbb{Z}^{d}$ is naturally embedded into $\mathbb{R}^{d}$, 
the combinatorial Laplacian on the scaled discrete torus converges to the Laplacian on the torus according to this natural embedding.
The local ergodic theorem is the key step of the proof since it enables us to replace local averages by global averages and to verify the derivation of the limit partial differential equation.
It is formulated by using local functions on the configuration space and the shift action on the discrete torus.
The proof of the local ergodic theorem is based on the one-block estimate and the two-blocks estimate.
Roughly speaking, the one-block estimate is interpreted as the local law of large numbers and the two-blocks estimate is interpreted as the asymptotic independence of two different local laws of large numbers.

Second, we look over the outline of the proof for the crystal lattice.
There are two main points with regard to the difference between $\mathbb{Z}^{d}$ and the crystal lattice, that are the convergence of the Laplacian and the local ergodic theorem.
Although the crystal lattice $X$ is embedded into an Euclidean space by a harmonic map $\Phi$,
the combinatorial Laplacian on the image of the $N$-scaling finite graph $\Phi_{N}(X_{N})$ does not converge to the Laplacian on the torus straightforwardly.
It is proved by averaging each fundamental domain by $\Gamma$-action because of the local inhomogeneity of the crystal lattice.
Thus, it is necessary to obtain the local ergodic theorem compatible with the convergence of the Laplacian.
Furthermore, it is also necessary to obtain the local ergodic theorem compatible with the local inhomogeneity of the crystal lattice.
For these reasons, we have to modify the local ergodic theorem in the case of crystal lattices.
To formulate the local ergodic theorem in the crystal lattice,
we introduce the notion of {\it $\Gamma$-periodic local function bundles}.
A $\Gamma$-periodic local function bundle is a family of local functions on the configuration space which is parametrized by vertices periodically.
Moreover,
we introduce two different ways to take local averages of a $\Gamma$-periodic local function bundle.
The first one is to take averages per each fundamental domain as a unit.
The second one is to take averages on each $\Gamma$-orbit.
The local ergodic theorem in the crystal lattice is formulated by using $\Gamma$-periodic local function bundles, two types of local averages and the $\Gamma_{N}$-action on the $N$-scaling finite graph $X_{N}$.
In fact, 
we use only special $\Gamma$-periodic local function bundles to handle the weakly asymmetric simple exclusion process.
The proof of this local ergodic theorem is also based on the one-block estimate and the two-blocks estimate.
Proofs of these two estimates are analogous to the case of the discrete torus since we use the fact that the whole crystal lattice is covered by the $\Gamma$-action of a fundamental domain in the first type of the local average and we restrict to a $\Gamma$-orbit in the second type of the local average.
In this paper, 
we call the local ergodic theorem the {\it replacement theorem} and prove it in the form of the super exponential estimate.
The derivation of the hydrodynamic equation is the same manner as the case of the discrete torus.

Let us mention related works.
Interacting particle systems are categorized into the gradient system and the non-gradient system, according to types of interactions.
We call the system the gradient system when the interaction term is represented by the difference of local functions.
Otherwise,
we call the system the non-gradient system.
We mention a recent work on the non-gradient system by Sasada \cite{Sa}.
The symmetric simple exclusion process is a model of the gradient system.
Our problems essentially correspond to problems for the gradient system since the hydrodynamic limit for the weakly asymmetric simple exclusion process is reduced to the one for the symmetric simple exclusion process, following \cite{KOV}.
As for the hydrodynamic limit on spaces other than the Euclidean lattice, 
Jara investigates the hydrodynamic limit for zero-range processes in the Sierpinski gasket (\cite{J}).
As for the crystal lattice, there is another type of the scaling limit.
In \cite{SS}, Shubin and Sunada study lattice vibrations of crystal lattices and calculate one of the thermodynamic quantities: the specific heat. 
They derive the equation of motion by taking the continuum limit of the crystal lattice.
As a further problem, we mention the following problem:
Recently, attentions have been payed for interacting particle systems evolving in random environments (e.g., \cite{BB}, \cite{F} and \cite{K}).
For example, the quenched invariance principle for the random walk on the infinite cluster of supercritical percolation of $\mathbb{Z}^{d}$ with $d \ge 2$ is proved by Berger and Biskup (\cite{BB}).
Their argument is based on a harmonic embedding of percolation cluster into $\mathbb{R}^{d}$.
Our use of the harmonic map $\Phi$ and local function bundles will play a role in the systematic treatment of particle systems in more general random graphs.
Furthermore, the hydrodynamic limit on the inhomogeneous crystal lattice is considered as the case where the crystal lattice has topological defects.
This problem would be interesting in connection with material sciences.

This paper is organized as follows:
In Section \ref{crystal lattices},
we introduce the crystal lattice and construct the scaling limit by using discrete harmonic maps.
In Section \ref{hydrodynamic limit},
we formulate the weakly asymmetric simple exclusion process on the crystal lattice and state the main theorem (Theorem \ref{main}).
In Section \ref{replacement theorem},
we introduce $\Gamma$-periodic local function bundles and
show the replacement theorem (Theorem \ref{super exponential estimate}).
We prove the one-block estimate and the two-blocks estimate.
In Section \ref{the proof of the main theorem},
we derive the quasi-linear parabolic equation, applying the replacement theorem and complete the proof of Theorem \ref{main}.
Section \ref{appendixA} is Appendix;A. 
We prove some lemmas related to approximation by combinatorial metrics to complete the scaling limit argument.
Section \ref{appendixB} is Appendix;B.
We refer an energy estimate of a weak solution and a uniqueness result for the partial differential equation to this appendix.

{\it Landau asymptotic notation.}
Throughout the paper, we use the notation $a=o_{N}$ to mean that $a \to 0$ as $N \to \infty$.
We also use the notation $a=o_{\epsilon}$ to mean that $a \to 0$ as $\epsilon \to 0$.

\section{The crystal lattice and the harmonic realization}\label{crystal lattices}

In this section, we introduce the crystal lattice as an infinite graph and its realization into the Euclidean space.

\subsection{Crystal lattices}

Let $X=(V,E)$ be a locally finite connected graph, where $V$ is a set of vertices and $E$ a set of all oriented edges. The graph $X$ may have loops and multiple edges.
For an oriented edge $e \in E$, we denote by $oe$ the origin of $e$, by $te$ the terminus and by $\overline e$ the inverse edge of $e$.
Here we regard $X$ as a weighted graph, whose weight functions on $V$ and $E$ are all equal to one.

We call a locally finite connected graph $X=(V,E)$ a {\it $\Gamma$-crystal lattice} if a free abelian group $\Gamma$ acts freely on $X$ and the quotient graph $\Gamma \backslash X$ is a finite graph $X_{0}=(V_{0},E_{0})$. 
More precisely, each $\sigma \in \Gamma$ defines a graph isomorphism $\sigma : X \to X$ and the graph isomorphism is fixed point-free except for $\sigma=id$.
In other words, a $\Gamma$-crystal lattice $X$ is an abelian covering graph of a finite graph $X_{0}$ whose covering transformation group is $\Gamma$.

\subsection{Harmonic maps}\label{harmonic}

Let us construct an embedding of a $\Gamma$-crystal lattice $X$ into the Euclidean space $\mathbb{R}^{d}$ of dimension $d=rank \Gamma$.

Given an injective homomorphism $\phi: \Gamma \to \mathbb{R}^{d}$ such that
there exits a basis $u_{1}, \dots, u_{d} \in \mathbb{R}^{d}$, 
$$
\phi(\Gamma)=\left\{\sum_{i=1}^{d}k_{i}u_{i} \ | \ \text{$k_{i}$ integers}\right\},
$$

then we define a harmonic map associated with $\phi$.

\begin{Def}
Fix an injective homomorphism $\phi$ as above. We call an embedding $\Phi:X \to \mathbb{R}^{d}$, a {\it $\phi$-periodic harmonic map} if $\Phi$ satisfies the followings:
{\it $\Phi$ is $\Gamma$-periodic}, i.e., for any $x \in V$ and any $\sigma \in \Gamma$, $\Phi(\sigma x)=\Phi(x) + \phi(\sigma)$ and {\it $\Phi$ is harmonic}, i.e., for any  $x \in V$, $\sum_{e \in E_{x}}[\Phi(te)-\Phi(oe)]=0$, where $E_{x}=\{e \in E \ | \ oe=x \}$.
\end{Def}

We note that a $\phi$-periodic harmonic map $\Phi$ depends on $\phi$ and call $\Phi$ a {\it periodic harmonic map} in short when we fix some $\phi$.

For $e \in E_{0}$, we take a lift $\tilde e \in E$ of $e$, and define ${\bf v}(e):=\Phi(t \tilde e)-\Phi(o \tilde e) \in \mathbb{R}^{d}$.
By the $\Gamma$-periodicity, ${\bf v}(e)$ does not depend on the choices of lifts.
For ${\bf v}(e)=(v_{1}(e),\dots,v_{d}(e)) \in \mathbb{R}^{d}$, let us define a $d \times d$-matrix by
$$
\mathbb{D}:=\frac{1}{4|V_{0}|}\left(\sum_{e \in E_{0}}v_{i}(e)v_{j}(e)\right)_{i,j=1,\dots,d}.
$$
Here the matrix is symmetric and positive definite.
We call the matrix $\mathbb{D}$ the {\it diffusion coefficient matrix}.

{\it Examples}

\begin{enumerate}

\item[0.]
The one dimensional standard lattice.

\item[0a.]

The one dimensional standard lattice $X$ which we identify the set of vertices $V$ with $\mathbb{Z}$ and the set of (unoriented) edges with the set of pairs of vertices
$\{(x, x+1) \ | \ x \in \mathbb{Z}\}$.
Now $\mathbb{Z}$ acts freely on $X$ by the additive operation in $\mathbb{Z}$ and the quotient finite graph consists of one vertex and one loop as un oriented graph.
When we regard $X$ as an oriented graph,
we add both oriented edges to $X$ and the quotient graph consists of one vertex and two oriented loops (Figure \ref{one-dim}).

Let us choose a canonical injective homomorphism $\phi: \mathbb{Z} \to \mathbb{R}$.
In our formulation, 
choose a basis $e_{1}=1$ in $\mathbb{R}$ and define 
$\phi: \mathbb{Z} \to \mathbb{R}$ by setting $\phi(n)=ne_{1}$ for $n \in \mathbb{Z}$ 
so that $\phi(\mathbb{Z})=\{ne_{1} \ | \ n \in \mathbb{Z}\}$.
By identifying the set of vertices of $X$ with $\mathbb{Z}$,
we define an embedding map $\Phi(x)=\phi(x)$, $x \in \mathbb{Z}$.
This embedding map $\Phi$ is a $\mathbb{Z}$-periodic harmonic map.
In this case,
$\mathbb{D}=1/2$.

\begin{figure}[h]
	\begin{center}
	\includegraphics[width=129mm]{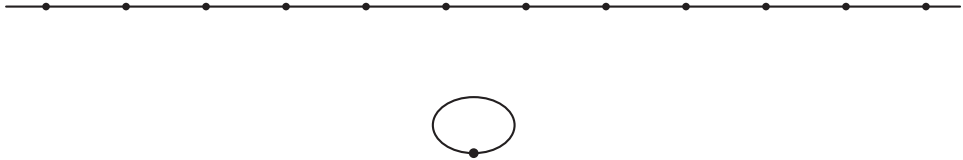}
	\end{center}
	\caption{The one dimensional standard lattice and the quotient graph in Example 0a.}
	\label{one-dim}
\end{figure}

\item[0b.]

Let us give another example of periodic harmonic map for the one dimensional standard lattice $X$.
Now we define a $\mathbb{Z}$-action on $X$ in the following way:
For $\sigma \in \mathbb{Z}$, $x \in V$, define $\sigma x:=2\sigma + x$.
Then this induces a free $\mathbb{Z}$-action on $X$ and the quotient graph consists of two vertices and two edges between them as an unoriented graph.
Let $\phi$ be the injective homomorphism as the same as in Example 0a.
We define an embedding map $\Phi : X \to \mathbb{R}$ by setting 
$\Phi(\sigma 0):=0 + \phi(\sigma)$, $\Phi(\sigma 1):= -1 +\phi(\sigma)$.
Then $\Phi$ is a periodic harmonic map.
The image of $\Phi$ is not isomorphic to the previous one (Figure \ref{one-dim2}).
In this case, $\mathbb{D}=5/4$.

\begin{figure}[h]
	\begin{center}
	\includegraphics[width=129mm]{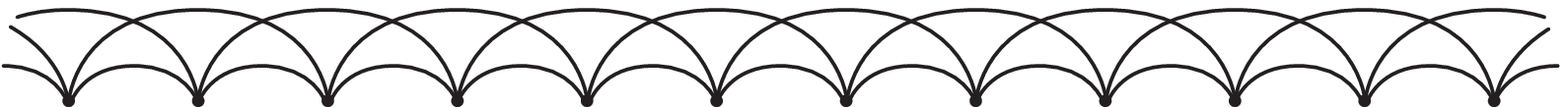}
	\end{center}
	\caption{The image of  $\Phi$ in Example 0b.}
	\label{one-dim2}
\end{figure}

\item[1.] The square lattice.

\item[1a.]
The square lattice has the standard embedding in $\mathbb{R}^{2}$ and this embedding is shown to be periodic and harmonic in our sense in the following.
We identify the set of vertices of the square lattice $X$ with $\mathbb{Z}^{2}$ and the set of edges with the set of pairs of vertices $\{(x, x + (1, 0)), (x, x + (0, 1)) \ | \ x \in \mathbb{Z}^{2})\}$.
Now $\mathbb{Z}^{2}$ acts freely on $X$ by the additive operation in $\mathbb{Z}^{2}$
and the quotient graph is the bouquet graph with one vertex and two unoriented loops.
When we regard $X$ as an oriented graph, we add both oriented edges to $X$ and the quotient finite graph is the bouquet graph with one vertex and four oriented loops.

Let us choose a canonical injective homomorphism $\phi: \mathbb{Z}^{2} \to \mathbb{R}^{2}$.
That is, choose a basis $\{e_{1}=(1, 0), e_{2}=(0, 1)\}$ in $\mathbb{R}^{2}$ and define $\phi: \mathbb{Z}^{2} \to \mathbb{R}^{2}$ by setting $\phi((m, n))=m e_{1} + n e_{2}$ for $(m, n) \in \mathbb{Z}^{2}$ so that
$\phi(\mathbb{Z}^{2})=\{\sum_{i=1}^{2}k_{i}e_{i} \ | \ k_{i} \ \text{integers}\}$.
By identifying the set of vertices of $X$ with $\mathbb{Z}^{2}$, we define an embedding map $\Phi(x)=\phi(x)$, $x \in \mathbb{Z}^{2}$.
This embedding map $\Phi$ is a $\mathbb{Z}^{2}$-periodic harmonic map.
In this case,
$\mathbb{D}=
\begin{pmatrix}
1/2	& 0 \\
0	& 1/2
\end{pmatrix}.
$

\item[1b.]
Let us give another example of periodic harmonic map for the square lattice $X$.
Choose a basis $\{u_{1}=(1, 0), u_{2}=(1/2, 1)\}$ in $\mathbb{R}^{2}$ and define $\phi: \mathbb{Z}^{2} \to \mathbb{R}^{2}$ by setting $\phi((m, n))=m u_{1} + n u_{2}$ for $(m, n) \in \mathbb{Z}^{2}$ so that
$\phi(\mathbb{Z}^{2})=\{\sum_{i=1}^{2}k_{i}u_{i} \ | \ k_{i} \ \text{integers}\}$.
In the same way as above Example 1a, we define an embedding map $\Phi(x)=\phi(x)$, $x \in \mathbb{Z}^{2}$.
 (Figure \ref{square}.)
This embedding map $\Phi$ is a $\mathbb{Z}^{2}$-periodic harmonic map.
In this case, 
$\mathbb{D}=
\begin{pmatrix}
5/8	& 1/4 \\
1/4	& 1/2
\end{pmatrix}.
$

\begin{figure}[h]
	\begin{center}
	\includegraphics[width=129mm]{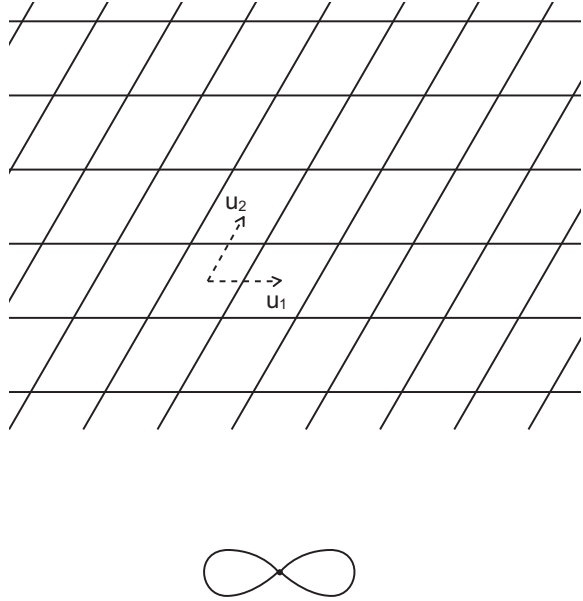}
	\end{center}
	\caption{The square lattice embedded as in Example 1b. and the quotient graph.}
	\label{square}
\end{figure}

\item[1c.]
Let us give an example an embedding map $\Phi$ which is periodic but not harmonic.
We choose an action of $\mathbb{Z}^{2}$ on the square lattice $X$ in the following way:
Again, we identify the set of vertices $V$ of $X$ with $\mathbb{Z}^{2}$.
For $\sigma = (\sigma_{1}, \sigma_{2}) \in \mathbb{Z}^{2}$, $x=(x_{1}, x_{2}) \in V$, define $\sigma x:=(\sigma_{1}+x_{1}, 2\sigma_{2} + x_{2})$.
Then this induces a free $\mathbb{Z}$-action and the quotient graph consists of two vertices, two edges between them and one loop on each vertex (two loops) as an unoriented graph.
Let $\phi$ be the same as in Example 1a. 
We define an embedding map $\Phi': X \to \mathbb{R}^{2}$ by setting
$\Phi'(\sigma (0,0))=(0,0) + \phi(\sigma)$, $\Phi'(\sigma (0,1)) =(1, 1/2) + \phi(\sigma)$ for $\sigma \in \mathbb{Z}^{2}$.
Then $\Phi'$ is periodic but not harmonic since for $x=(0,0) \in \mathbb{Z}^{2}$,
$\sum_{e \in E_{x}}\left[\Phi'(te)- \Phi'(oe)\right] = (1, 0)+(-1, 0) +(1, 1/2) + (1, -1/2)=(2, 0) \neq (0,0)$.

\item[2.]
The hexagonal lattice.

The hexagonal lattice admits a free $\mathbb{Z}^{2}$-action with the quotient graph consisting of two vertices and three edges as an unoriented graph.
We define a fundamental subgraph $D$ by setting the set of vertices $\{x_{0}, x_{1}, x_{2}, x_{3}\}$ and the set of (unoriented) edges $\{(x_{0}, x_{1}), (x_{0}, x_{2}), (x_{0}, x_{3})\}$.
Then the hexagonal lattice has a subgraph isomorphic to $D$ and is covered by copies of the subgraph translated by the $\mathbb{Z}^{2}$-action. 
Choose a basis $\{u_{1}=(\sqrt{3}, 0), u_{2}=(\sqrt{3}/2, 3/2)\}$ in $\mathbb{R}^{2}$ and define $\phi: \mathbb{Z}^{2} \to \mathbb{R}^{2}$ by setting $\phi((m, n))=m u_{1} + n u_{2}$ for $(m, n) \in \mathbb{Z}^{2}$ so that
$\phi(\mathbb{Z}^{2})=\{\sum_{i=1}^{2}k_{i}u_{i} \ | \ k_{i} \ \text{integers}\}$.
We define an embedding map $\Phi$ by setting $\Phi(\sigma x_{0})=(0,0) + \phi(\sigma)$, $\Phi(\sigma x_{1})=(-\sqrt{3}/2, 1/2) + \phi(\sigma)$, $\Phi(\sigma x_{2})=(\sqrt{3}/2, 1/2) +\phi(\sigma)$ and $\Phi(\sigma x_{3})=(0, -1) + \phi(\sigma)$ for $\sigma \in \mathbb{Z}^{2}$.
(Figure \ref{hexagonal}.)
Then $\Phi$ is a periodic harmonic map.
In this case,  
$\mathbb{D}=
\begin{pmatrix}
3/8	& 0 \\
0	& 3/8
\end{pmatrix}.
$

\begin{figure}[h]
	\begin{center}
	\includegraphics[width=100mm]{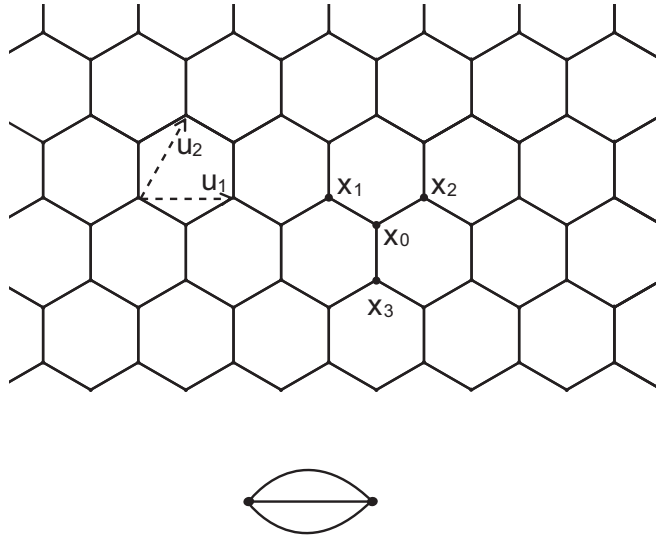}
	\end{center}
	\caption{The hexagonal lattice embedded as in Example 2. and the quotient graph.}
	\label{hexagonal}
\end{figure}

\item[3.]
The Kagom\'{e} lattice.

The Kagom\'{e} lattice admits a free $\mathbb{Z}^{2}$-action with the quotient graph consisting of three vertices and six edges (two edges between each pair of vertices) as an unoriented graph.
We define a fundamental subgraph $D$ by setting the set of vertices $\{x_{0}, x_{1}, x_{2}, x_{3}, x_{4}\}$ and the set of (unoriented) edges $\{(x_{0}, x_{1}), (x_{0}, x_{2}), (x_{0}, x_{3}), (x_{0}, x_{4})\}$.
Then the Kagom\'{e} lattice has a subgraph isomorphic to $D$ and is covered by copies of the subgraph translated by the $\mathbb{Z}^{2}$-action.
Choose a basis $\{u_{1}=(\sqrt{3}, 0), u_{2}=(\sqrt{3}/2, 3/2)\}$ in $\mathbb{R}^{2}$
and define $\phi: \mathbb{Z}^{2} \to \mathbb{R}^{2}$ as the same as in Example 2.
We define an embedding map $\Phi$ by setting
$\Phi(\sigma x_{0})=(0,0)+\phi(\sigma)$,
$\Phi(\sigma x_{1})=(-\sqrt{3}/2,0)+\phi(\sigma)$,
$\Phi(\sigma x_{2})=(-\sqrt{3}/4,-3/4)+\phi(\sigma)$,
$\Phi(\sigma x_{3})=(\sqrt{3}/2, 0)+\phi(\sigma)$,
$\Phi(\sigma x_{4})=(\sqrt{3}/4, 3/4)+\phi(\sigma)$
for $\sigma \in \mathbb{Z}^{2}$.
 (Figure \ref{kagome}.)
Then $\Phi$ is a periodic harmonic map.
In this case,
$\mathbb{D}=
\begin{pmatrix}
3/8	& 0 \\
0	& 3/8
\end{pmatrix}.
$

\begin{figure}[h]
	\begin{center}
	\includegraphics[width=100mm]{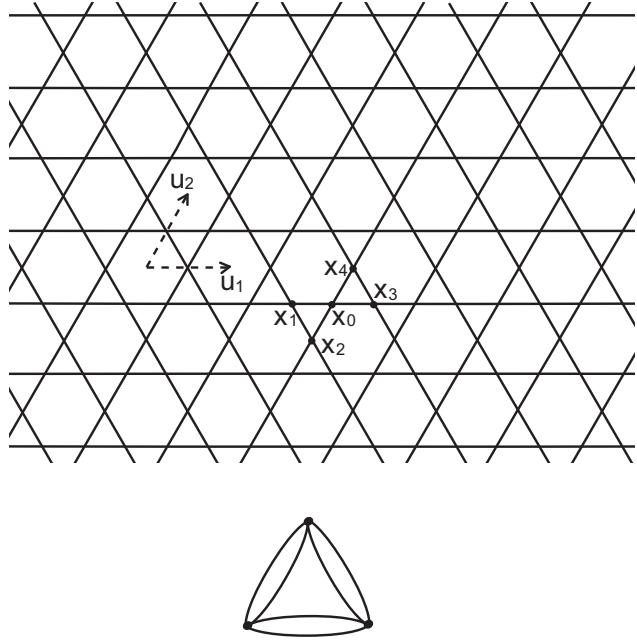}
	\end{center}
	\caption{The Kagom\'{e} lattice embedded as in Example 3. and the quotient finite graph.}
	\label{kagome}
\end{figure}

\end{enumerate}

{\it Remark}.

The notion of periodic harmonic map on $\Gamma$-crystal lattice is studied by Kotani and Sunada and including the standard realization which they introduced in \cite{KS02} as a special case.
They use harmonic maps to characterize {\it equilibrium configurations} of crystals.
In fact, a periodic harmonic map is characterized by a critical map for some discrete analogue of energy functional.
The standard realization is not only a critical map but also the map whose energy itself is minimized by changing flat metrics on torus with fixed volume. 
(More precisely, see\cite{KS02}).
The existence of periodic harmonic map for every injective homomorphism producing lattices in $\mathbb{R}^{d}$ and the uniqueness up to translation is proved in Theorem 2.3 and Theorem 2.4 in \cite{KS02}.

\subsection{Scaling Limits}\label{scaling limit}

Let us construct the scaling limit of the crystal lattice.
Suppose that $\Gamma$ is isomorphic to $\mathbb{Z}^{d}$.
Let $N \ge 1$ be an arbitrary positive integer and $N\Gamma$ the subgroup isomorphic to $N\mathbb{Z}^{d}$. 
The subgroup $N\Gamma$ acts also freely on $X$ and its quotient graph $N\Gamma \backslash X$ is also a finite graph $X_{N}=(V_{N},E_{N})$. 
Then $\Gamma \slash N\Gamma \cong \mathbb{Z}^{d}\slash N\mathbb{Z}^{d}$ acts freely on $X_{N}$. 
We call $X_{N}$ the {\it $N$-scaling finite graph}.
The map 
$$
\frac{1}{N}\Phi:X \to \mathbb{R}^{d},
$$
satisfies that $(1/N)\Phi(\sigma^{N} x )=(1/N)\Phi(x)+\psi(\sigma)$ for all $x \in V$ and all $\sigma \in \Gamma$ since $\Phi$ is $\Gamma$-equivariant. 
We have the torus $\mathbb{T}^{d}:=\mathbb{R}^{d}\slash \psi(\Gamma)$, equipped with the flat metric induced from the Euclidean metric.
The torus depends on $\psi$, however, we do not specify it in the following.
Then the map $(1/N)\Phi:X \to \mathbb{R}^{d}$ induces the map 
$$
\Phi_{N}:X_{N} \to \mathbb{T}^{d}.
$$
We call $\Phi_{N}$ the {\it $N$-scaling map}. (Figure\ref{scaling}.)

$$
\begin{CD}
X @> \frac{1}{N}\Phi >> 	\mathbb{R}^{d} \\
@VVV		@VVV	\\
X_{N}	@>> \Phi_{N} > \mathbb{T}^{d}
\end{CD}
$$

\begin{figure}[h]
	\begin{center}
	\includegraphics[width=129mm]{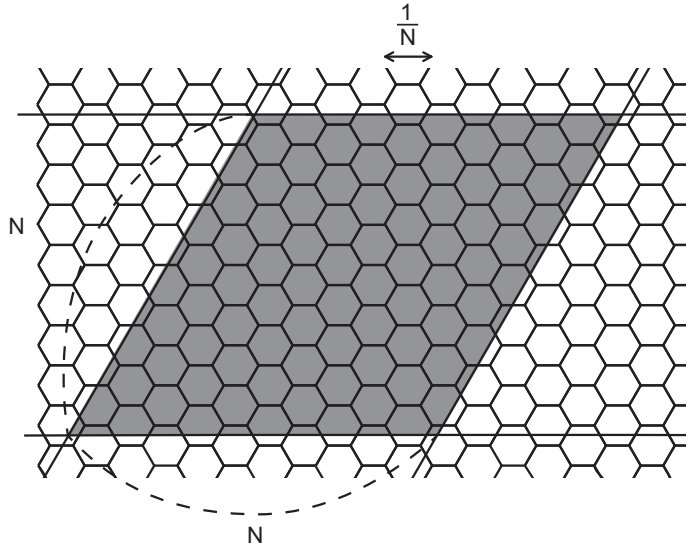}
	\end{center}
	\caption{The image of the $N$-scaling finite graph by a harmonic map in the covering space}
	\label{scaling}
\end{figure}

Next, we observe convergence of the combinatorial Laplacian on $X_{N}$.
Since the degrees of $x \in V_{N}$ might be different, depending on each $x$,
we consider ``average" of the combinatorial Laplacian on a fundamental domain.

Let $d(x)$ be the degree of a vertex $x \in V_{N}$, i.e., the cardinality of the set $E_{N, x}=\{e \in E_{N} \ | \ oe=x\}$. Define the combinatorial Laplacian $\Delta^{c}_{N}$ associated with $X_{N}=(V_{N}, E_{N})$ acting on the space of continuous functions $C(\mathbb{T}^{d})$ by
$$
\Delta^{c}_{N}J(\Phi_{N}(x)):=\frac{1}{d(x)}\sum_{e \in E_{N, x}}\left[J(\Phi_{N}(te)) - J(\Phi_{N}(oe))\right],
$$
for $J \in C(\mathbb{T}^{d})$ and $x \in V_{N}$.
We show that the combinatorial Laplacian converges to the Laplacian on $\mathbb{T}^{d}$ in the following sense:
For every twice continuous derivative functions $J \in C^{2}(\mathbb{T}^{d})$, for every ${\bf x} \in \mathbb{T}^{d}$, for each $x \in V_{0}$, take an arbitrary sequence of vertices $x_{N} \in V_{N}$ such that $x_{N}$ is a lift of $x$ and $\Phi_{N}(x_{N}) \to {\bf x}$ as $N \to \infty$, then by the Taylor formula,
\begin{align*}
&N^{2}\Delta^{c}_{N}J(\Phi_{N}(x_{N})) 	\\
&= \frac{1}{d(x)}\sum_{e \in E_{N,x_{N}}}\left(N\sum_{i=1}^{d}\frac{\partial J}{\partial x_{i}}(\Phi_{N}(x_{N}))v_{i}(e)+\frac{1}{2}\sum_{i,j=1}^{d}\frac{\partial^{2}J}{\partial x_{i}\partial x_{j}}(\Phi_{N}(x_{N}))v_{i}(e)v_{j}(e)\right) + o_{N}.
\end{align*}
Since $\Phi$ is harmonic, 
$$
\frac{1}{|V_{0}|}\sum_{x \in V_{0}}d(x)N^{2}\Delta^{c}_{N}J\left(\Phi_{N}(x_{N})\right) = \frac{1}{2|V_{0}|}\sum_{x \in V_{0}}\sum_{e \in E_{N,x_{N}}}\sum_{i,j=1}^{d}\frac{\partial^{2}J}{\partial x_{i}\partial x_{j}}({\bf x})v_{i}(e)v_{j}(e)  + o_{N}.
$$
Since $\sum_{x \in V_{0}}\sum_{e \in E_N,x_{N}}v_{i}(e)v_{j}(e)=\sum_{e \in E_{0}}v_{i}(e)v_{j}(e)$,
the last term is equal to $2\nabla\mathbb{D}\nabla J({\bf x})$, where $\mathbb{D}$ is a diffusion coefficient matrix and $\nabla \mathbb{D} \nabla=\sum_{i,j}d_{ij}({\partial}^{2}/\partial x_{i}\partial x_{j})$ and $\mathbb{D}=(d_{ij})_{i,j=1,\dots,d}$.

\section{Hydrodynamic limit for exclusion processes}\label{hydrodynamic limit}

We formulate the symmetric simple exclusion process and the weakly asymmetric simple exclusion process in crystal lattices.
As we see below, the former is a particular case of the latter.

Let $X_{N}=(V_{N},E_{N})$ be the $N$-scaling finite graph of $X$.
We denote the configuration space by $Z_{N}:=\{0,1\}^{V_{N}}$.
We denote the configuration space for the whole crystal lattice $X=(V, E)$ by $Z:=\{0, 1\}^{V}$. Each configuration is defined by $\eta=(\eta_{x})_{x \in V_{N}} \in Z_{N}$ with $\eta_{x}=0$ or $1$ and by $\eta \in Z$ in the same way.

We consider the Bernoulli measure $\nu_{\rho}^{N}$ and $\nu_{\rho}$ on $Z_{N}$, $Z$, respectively, for $0 \le \rho \le 1$.
They are defined as the product measures of the Bernoulli measure $\nu_{\rho}^{1}$ on $\{0,1\}$, where $\nu_{\rho}^{1}(0)=1-\rho, \nu_{\rho}^{1}(1)=\rho$.

Let $L^{2}(Z_{N}, \nu_{\rho}^{N})$ be the $L^{2}$-space of $\mathbb{R}$-valued functions on $Z_{N}$. The action of $\Gamma_{N}$ on $X_{N}$ lifts on $Z_{N}$ by setting $(\sigma \eta)_{x}:=\eta_{\sigma^{-1}x}$ for $\sigma \in \Gamma_{N}$ and $x \in V_{N}$.
The group $\Gamma_{N}$ also acts on $L^{2}(Z_{N}, \nu_{\rho}^{N})$ by $\sigma F(\eta):=F(\sigma^{-1}\eta)$ for $F \in L^{2}(Z_{N}, \nu_{\rho}^{N})$.
For $e \in E_{N}$ and $\eta \in Z_{N}$, we denote by $\eta^{e}$ the configuration defined by exchanging the values of $\eta_{oe}$ and $\eta_{te}$, i.e.,
$$
\eta^{e}_{x}:=
\begin{cases}
\eta_{te} & x=oe \\
\eta_{oe} & x=te \\
\eta_{x} & \text{otherwise}.
\end{cases}
$$
For each $e \in E_{N}$, we define the operator $\pi_{e}:L^{2}(Z_{N},\nu_{\rho}^{N}) \to L^{2}(Z_{N},\nu_{\rho}^{N})$ by setting $\pi_{e}F(\eta):=F(\eta^{e})-F(\eta)$.
We see that $\eta^{e}=\eta^{\overline e}$ and $\pi_{e}F=\pi_{\overline e}F$ for $\overline e \in E_{N}$.
The above notations also indicate corresponding ones for $Z=\{0,1\}^{V}$ the configuration space on the whole crystal lattice.

The symmetric simple exclusion process is defined by the generator $L_{N}$ acting on $L^{2}(Z_{N},\nu_{\rho}^{N})$ as 
$$
L_{N}F(\eta)=\frac{1}{4}\sum_{e \in E_{N}} \pi_{e}F(\eta), \ \ \ \ \ F \in L^{2}(Z_{N},\nu_{\rho}^{N}).
$$

The weakly asymmetric simple exclusion process is defined as a perturbation of the symmetric simple exclusion process.
We denote by $C^{1, 2}([0,T]\times \mathbb{T}^{d})$ the space of continuous functions with continuous derivatives in $[0, T]$ and the twice continuous derivatives in $\mathbb{T}^{d}$.
For each function $H \in C^{1, 2}([0,T]\times \mathbb{T}^{d})$, the weakly asymmetric simple exclusion process on $X_{N}$ is defined by the generator $L_{N}^{H}$ acting on $L^{2}(Z_{N},\nu_{\rho}^{N})$ as 
$$
L_{N}^{H}F(\eta)=\frac{1}{2}\sum_{e \in E_{N}} c^{H}(e, \eta, t)\pi_{e}F(\eta), \ \ \ \ \ F \in L^{2}(Z_{N},\nu_{\rho}^{N}),
$$
where
$$
c^{H}(e, \eta, t):=\eta_{oe}\left(1-\eta_{te}\right)\exp\left[H(t, \Phi_{N}(te))- H(t, \Phi_{N}(oe))\right].
$$
Here $\Phi_{N}: X_{N} \to \mathbb{T}^{d}$ is the $N$-scaling map.
The meaning of the perturbation is as follows:
We introduce a ``small'' drift depending on space and time in particles.
In the original process,
a particle jumps with rate $1/2$ from $oe$ to $te$ ($e$ is an edge) at time $t$,
while in the perturbed process, a particle jumps approximately with rate
$$
\frac{1}{2}\left(1+\frac{1}{N}H(t, \Phi_{N}(oe))\right).
$$
Therefore, the external field which is now $(1/2N)\nabla H$ gives a small asymmetry of the order $1/N$ in the jump rate.
Notice that we obtain the symmetric simple exclusion process when $H$ is constant.

{\it Remark}

The weakly asymmetric simple exclusion process which we introduced here does not include the well-studied case where for one dimensional lattice, the external field is $(1/2N)E$ for some constant $E >0$ and its limit equation produces the viscous Burgers equation (e.g., \cite{BLM}).
This process corresponds to the case with $\nabla H \equiv E$ which we do not treat here.

Let $D([0,T], Z_{N})$ be the space of paths which are right continuous and have left limits for some arbitrary fixed time $T>0$.
For a probability measure $\mu^{N}$ on $Z_{N}$, we denote by $\mathbb{P}_{N}^{H}$ the distribution on $D([0,T], Z_{N})$ of the continuous time Markov chain $\eta^{N}(t)$ generated by $N^{2}L_{N}^{H}$ with the initial measure $\mu^{N}$.

The main theorem is stated as follows:

\begin{thm}\label{main}

Let $\rho_{0}:\mathbb{T}^{d} \to [0,1]$ be a measurable function. 
If a sequence of probability measures $\mu^{N}$ on $Z_{N}$ satisfies that
$$
\lim_{N \to \infty}\mu^{N}\left[\left|\frac{1}{|V_{N}|}\sum_{x \in V_{N}}J\left(\Phi_{N}(x)\right)\eta^{N}_{x}(0)- \int_{\mathbb{T}^{d}}J(u)\rho_{0}(u)du \right| > \delta \right]=0,
$$
for every $\delta >0$ and for every continuous functions $J:\mathbb{T}^{d} \to \mathbb{R}$,
then for every $t \ge 0$, 
$$
\lim_{N \to \infty}\mathbb{P}_{N}^{H}\left[\left|\frac{1}{|V_{N}|}\sum_{x \in V_{N}}J\left(\Phi_{N}(x)\right)\eta^{N}_{x}(t)- \int_{\mathbb{T}^{d}}J(u)\rho(t,u)du \right| > \delta \right]=0,
$$
for every $\delta >0$ and for every continuous functions $J:\mathbb{T}^{d} \to \mathbb{R}$,
where $\rho(t,u)$ is the unique weak solution of the following quasi-linear parabolic equation:
\begin{equation}\label{pde1}
\frac{\partial}{\partial t}\rho = \nabla \mathbb{D} \nabla \rho - \frac{1}{2|V_{0}|}\sum_{e \in E_{0}}\nabla_{{\bf v}(e)}\left(\rho(1-\rho)\nabla_{{\bf v}(e)}H \right), \ \ \ \ \ \rho(0,\cdot)=\rho_{0}(\cdot).
\end{equation}
\end{thm}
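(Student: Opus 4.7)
The plan is to follow the martingale approach of Guo--Papanicolaou--Varadhan, adapted to the crystal lattice as in Kipnis--Olla--Varadhan. Set $\pi^{N}_{t}:=|V_{N}|^{-1}\sum_{x \in V_{N}}\eta^{N}_{x}(t)\,\delta_{\Phi_{N}(x)}$, the empirical measure on $\mathbb{T}^{d}$. First I would establish tightness of the laws of $\{\pi^{N}_{\cdot}\}$ in $D([0,T],\mathcal{M}_{+}(\mathbb{T}^{d}))$: since $\mathbb{T}^{d}$ is compact and $|\eta|\le 1$, it is enough to check an Aldous-type modulus estimate for $\langle J,\pi^{N}_{\cdot}\rangle$ for each $J \in C^{2}(\mathbb{T}^{d})$, which follows from Dynkin's formula. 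For such $J$ (more generally $J \in C^{1,2}([0,T]\times\mathbb{T}^{d})$),
\begin{equation*}
M^{N,J}_{t}:=\langle J_{t},\pi^{N}_{t}\rangle - \langle J_{0},\pi^{N}_{0}\rangle - \int_{0}^{t}\bigl(\partial_{s}+N^{2}L^{H}_{N}\bigr)\langle J_{s},\pi^{N}_{s}\rangle\,ds
\end{equation*}
is a mean-zero martingale whose quadratic variation is $O(1/N)$, hence vanishes in probability.

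A direct computation gives
\begin{equation*}
N^{2}L^{H}_{N}\langle J_{s},\pi^{N}_{s}\rangle = \frac{N^{2}}{2|V_{N}|}\sum_{e\in E_{N}}\eta_{oe}(1-\eta_{te})\,e^{H_{s}(\Phi_{N}te)-H_{s}(\Phi_{N}oe)}\bigl(J_{s}(\Phi_{N}te)-J_{s}(\Phi_{N}oe)\bigr).
\end{equation*}
Pairing each oriented edge with its reverse and Taylor-expanding the exponential yields
\begin{equation*}
\eta_{oe}(1-\eta_{te})e^{H(te)-H(oe)}-\eta_{te}(1-\eta_{oe})e^{H(oe)-H(te)} = (\eta_{oe}-\eta_{te}) + (H(te)-H(oe))(\eta_{oe}+\eta_{te}-2\eta_{oe}\eta_{te}) + O(N^{-2}).
\end{equation*}
Combined with the midpoint Taylor expansion of $J_{s}$, this decomposes $N^{2}L^{H}_{N}\langle J_{s},\pi^{N}_{s}\rangle = S^{N}_{s}+D^{N}_{s}+o_{N}$, where $S^{N}_{s}$ carries the antisymmetric piece $\eta_{oe}-\eta_{te}$ (the would-be Laplacian) and $D^{N}_{s}$ carries the symmetric piece $\eta_{oe}+\eta_{te}-2\eta_{oe}\eta_{te}$ (the drift).

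Using the harmonicity $\sum_{e:oe=x}{\bf v}(e)=0$ to perform summation by parts in $S^{N}_{s}$ produces
\begin{equation*}
S^{N}_{s}=\frac{1}{4|V_{N}|}\sum_{x\in V_{N}}\eta^{N}_{x}(s)\,\nabla^{2}J_{s}(\Phi_{N}(x)):\sum_{e\in E_{N,x}}{\bf v}(e)^{\otimes 2}+o_{N}.
\end{equation*}
Since $\sum_{e\in E_{N,x}}{\bf v}(e)^{\otimes 2}$ depends only on the $\Gamma$-orbit of $x$, the replacement theorem (Theorem \ref{super exponential estimate}) in its fundamental-domain-average form replaces this tensor against $\eta^{N}_{x}(s)$ by the local empirical density times the average $|V_{0}|^{-1}\sum_{e\in E_{0}}{\bf v}(e)^{\otimes 2}=4\mathbb{D}$, producing $\int_{\mathbb{T}^{d}}\rho(s,u)\,\nabla\mathbb{D}\nabla J_{s}(u)\,du$ in the limit. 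For $D^{N}_{s}$, the $\Gamma$-orbit form of the replacement theorem replaces the local cylinder function $\eta_{oe}(1-\eta_{te})+\eta_{te}(1-\eta_{oe})$ by its equilibrium value $2\rho(1-\rho)$, and averaging the remaining tensor $(\nabla J_{s}\cdot{\bf v}(e))(\nabla H_{s}\cdot{\bf v}(e))$ over the fundamental domain then gives $\frac{1}{2|V_{0}|}\sum_{e\in E_{0}}\int_{\mathbb{T}^{d}}(\nabla_{{\bf v}(e)}J_{s})(\nabla_{{\bf v}(e)}H_{s})\rho(1-\rho)\,du$, which is exactly the weak form of the drift term in \eqref{pde1}. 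Combining all of this, any limit point $\pi_{t}=\rho(t,u)\,du$ satisfies the weak formulation of \eqref{pde1} with initial datum $\rho_{0}$; the energy estimate and uniqueness result of Appendix B then force the whole sequence $\{\pi^{N}\}$ to converge to this unique weak solution, completing the proof.

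The main obstacle is the replacement theorem itself. Two distinct sources of inhomogeneity have to be reconciled simultaneously: the local geometry, reflected in tensors such as $\sum_{e:oe=x}{\bf v}(e)^{\otimes 2}$ that vary with the type $x\in V_{0}$, and the nonlinear dependence of the rates on the state through cylinder functions like $\eta_{oe}(1-\eta_{te})$. Handling them requires averaging in two different directions --- over fundamental domains and along $\Gamma$-orbits --- which is what motivates the introduction of $\Gamma$-periodic local function bundles and the parallel pair of one-block and two-blocks estimates adapted to the crystal lattice.
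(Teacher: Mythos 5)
Your proposal is correct and follows essentially the same route as the paper: the martingale decomposition with vanishing quadratic variation, tightness and absolute continuity of limit points, expansion of $N^{2}L^{H}_{N}\langle J,\xi_{N}\rangle$ using the harmonicity of $\Phi$, the two-directional replacement theorem (over fundamental domains and over $\Gamma$-orbits via local function bundles), and the Appendix B energy estimate and uniqueness to identify the limit. The only cosmetic difference is that you pair each edge with its reverse to produce the symmetric combination $\eta_{oe}+\eta_{te}-2\eta_{oe}\eta_{te}$ replaced by $2\rho(1-\rho)$, whereas the paper keeps the oriented sum and handles the quadratic part through the bundle $f^{(e)}_{x}(\eta)=\eta_{oe}\eta_{te}$ with $\langle f^{(e)}_{x}\rangle(\rho)=\rho^{2}$; these are algebraically equivalent.
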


Here we define $\nabla_{{\bf v}(e)}:=\sum_{i=1}^{d}v_{i}(e)(\partial/\partial x_{i})$ for $e \in E_{0}$.

We give examples corresponding to ones in Section \ref{harmonic}.

{\it Examples}

\begin{enumerate}
\item[0.]
The one dimensional standard lattice.

For the embedding in Example 0a., we recover the equation in Theorem 3.1 in \cite{KOV}:
$$
\frac{\partial}{\partial t}\rho = \frac{1}{2}\frac{\partial^{2}}{\partial x^{2}}\rho-\frac{\partial}{\partial x}\left(\rho(1-\rho)\frac{\partial H}{\partial x}\right).
$$

For the embedding in Example 0b., we have the following equation:
$$
\frac{\partial}{\partial t}\rho = \frac{5}{4}\frac{\partial^{2}}{\partial x^{2}}\rho-\frac{5}{2}\frac{\partial}{\partial x}\left(\rho(1-\rho)\frac{\partial H}{\partial x}\right).
$$

\item[1.]
The square lattice.

For the square lattice and its embedding in Example 1a., we have the following equation:
$$
\frac{\partial}{\partial t}\rho = \frac{1}{2}\left(\frac{\partial^{2}}{\partial x^{2}}+\frac{\partial^{2}}{\partial y^{2}}\right)\rho-\frac{\partial}{\partial x}\left(\rho(1-\rho)\frac{\partial H}{\partial x}\right)-\frac{\partial}{\partial x}\left(\rho(1-\rho)\frac{\partial H}{\partial y}\right).
$$

For the square lattice and its embedding in Example 1b., we have the following equation:

\begin{align*}
&\frac{\partial}{\partial t}\rho =\left(\frac{5}{8}\frac{\partial^{2}}{\partial x^{2}}+\frac{1}{4}\frac{\partial^{2}}{\partial x \partial y}++\frac{1}{4}\frac{\partial^{2}}{\partial y \partial x}+\frac{1}{2}\frac{\partial^{2}}{\partial y^{2}}\right)\rho  \\
&-\frac{5}{4}\frac{\partial}{\partial x}\left(\rho(1-\rho)\frac{\partial H}{\partial x}\right) -\frac{1}{2}\frac{\partial}{\partial x}\left(\rho(1-\rho)\frac{\partial H}{\partial y}\right)
-\frac{1}{2}\frac{\partial}{\partial y}\left(\rho(1-\rho)\frac{\partial H}{\partial x}\right)
-\frac{\partial}{\partial y}\left(\rho(1-\rho)\frac{\partial H}{\partial y}\right).
\end{align*}

\item[2.]
The hexagonal lattice, the Kagom\'{e} lattice.

For the hexagonal lattice, the Kagom\'{e} lattice and their embeddings in Example 2. and 3., we have the following same equation:
$$
\frac{\partial}{\partial t}\rho =\frac{3}{8}\left(\frac{\partial^{2}}{\partial x^{2}}+\frac{\partial^{2}}{\partial y^{2}}\right)\rho 
-\frac{3}{4}\frac{\partial}{\partial x}\left(\rho(1-\rho)\frac{\partial H}{\partial x}\right) -\frac{3}{4}\frac{\partial}{\partial y}\left(\rho(1-\rho)\frac{\partial H}{\partial y}\right).
$$

\end{enumerate}

\section{Replacement theorem}\label{replacement theorem}

In this section, we formulate the replacement theorem and give its proof. The replacement theorem is given by the form of super exponential estimate and follows from the one-block estimate and the two blocks estimate. 

\subsection{Local function bundles}\label{local function bundles}

For our purpose, 
we introduce local function bundles which describe the local interactions of particles and the two types of local averages for local function bundles.

\begin{Def}
A local function bundle $f$ on $V\times Z$ is a function $f:V\times Z \to \mathbb{R}$, which satisfies that for each $z \in V$ there exists $r>0$ such that $f_{z}:Z \to \mathbb{R}$ depends only on $\{\eta_{x} \ | \ d(x,z) \le r\}$.
Here $d$ is the graph distance in $X$.
We say that a local function bundle $f:V \times Z \to \mathbb{R}$ is $\Gamma$-periodic if it holds that $f_{\sigma x}(\sigma \eta)=f_{x}(\eta)$ for any $\sigma \in \Gamma$, $x \in V$ and $\eta \in Z$.
\end{Def}

Here we give examples of $\Gamma$-periodic local function bundles on $V \times Z$.
We use the first one and the third one later.

{\it Examples}
\begin{itemize}
\item If we define $f:V \times Z \to \mathbb{R}$ by for $x \in V$ and $\eta \in Z$
$$
f_{x}(\eta)=\eta_{x},
$$
then $f$ is a $\Gamma$-periodic local function bundle on $V \times Z$.

\item If we define $f:V \times Z \to \mathbb{R}$ by for $x \in V$ and $\eta \in Z$
$$
f_{x}(\eta)=\prod_{e \in E_{x}}\eta_{te},
$$
then $f$ is a $\Gamma$-periodic local function bundle on $V \times Z$.

\item Fix $e \in E^{0}$.
If we define $f^{(e)}:V \times Z \to \mathbb{R}$ by
$$
f^{(e)}_{x}(\eta)=
\begin{cases}
\eta_{o\sigma e}\eta_{t\sigma e}	& \text{there exists (unique) $\sigma \in \Gamma$ such that $x=o\sigma e$,} \\
0				& \text{otherwise,}
\end{cases}	
$$
then $f$ is a $\Gamma$-periodic local function bundle on $V \times Z$.
\end{itemize}

Note that a $\Gamma$-periodic local function bundle $f:V \times Z \to \mathbb{R}$ induces a map $\underline f : V_{N} \times Z_{N} \to \mathbb{R}$ for large enough $N$ in the natural way. 
To abuse the notation, we indicate the induced map by the same character $f$.

First, for $R >0$, we define the $R$-ball by
$$
B(D_{x_{0}},R):=\bigcup_{\sigma \in \Gamma, |\sigma|\le R}\sigma D_{x_{0}} \subset V.
$$
Here $|\cdot|$ is the word metric appearing in Section \ref{approximation}.
We regard that $B(D_{x_{0}}, R)\subset V_{N}$ via the covering map when $N$ is large enough for $R$.
For a local function bundle $f:V\times Z \to \mathbb{R}$, we define the local average of $f$ on blocks $\{\sigma D_{x_{0}}\}_{\sigma \in \Gamma}$ by for $x \in V$,
$$
\overline f_{x, R}:=\frac{1}{\left|[x]B(D_{x_{0}},R)\right|}\sum_{z \in [x]B(D_{x_{0}},R)} f_{z} : Z \to \mathbb{R},
$$
where $[x]$ is a unique element $\sigma \in \Gamma$ such that $x \in \sigma D_{x_{0}}$ and $\left|[x]B(D_{x_{0}},R)\right|$ denotes the cardinality of the set, which is equal to $\left|B(D_{x_{0}},R)\right|$. 
Note that $\overline f_{x, R}=\overline f_{x_{0}, R}$ for every $x \in D_{x_{0}}$.
As a special case, we define for $\eta \in Z$ and $x \in V$,
$$
\overline \eta_{x,R}:=\frac{1}{\left|[x]B(D_{x_{0}},R)\right|}\sum_{z \in [x]B(D_{x_{0}},R)} \eta_{z} : Z \to \mathbb{R}.
$$

Second, we define the local average of $f$ on each $\Gamma$-orbit, $\{\sigma x\}_{\sigma \in \Gamma}$ by
for $x \in V$,
$$
\widetilde f_{x, R}:=\frac{1}{\left|\{\sigma \ | \ |\sigma| \le R\}\right|} \sum_{\sigma \in \Gamma, |\sigma| \le R}f_{\sigma x}: Z \to \mathbb{R}.
$$
Note that $\overline f_{\cdot, R}$ and $\widetilde f_{\cdot, R}$ are $\Gamma$-periodic when $f$ is $\Gamma$-periodic.

If a local function bundle $f$ is $\Gamma$-periodic and $N$ is large enough,
then $\overline f_{\cdot, R}, \widetilde f_{\cdot, R}$ induce the functions on $Z_{N}$ in the natural way.
To abuse the notation, we indicate the induced maps by the same characters $\overline f_{\cdot, R}, \widetilde f_{\cdot, R}$.

\subsection{Super exponential estimate}

For a local function bundle $f:V\times Z \to \mathbb{R}$, for $x \in V$, 
let us define $\langle f_{x} \rangle (\rho):=\mathbb{E}_{\nu_{\rho}}\left[f_{x}\right]$, the expectation with respect to the Bernoulli measure $\nu_{\rho}$.

The following estimate allows us to replace the local averages of the local function bundle by the global averages of the empirical density. We call the following theorem the replacement theorem. We prove it in the form of the super exponential estimate.

\begin{thm}\label{super exponential estimate}{\upshape (Super exponential estimate of the replacement theorem)}

Fix $T>0$. For any $\Gamma$-periodic local function bundles $f:V\times Z \to \mathbb{R}$, for every $x \in D_{x_{0}}$ and for every $\delta >0$, it holds that
$$
\lim_{K \to \infty}\limsup_{\epsilon \to 0}\limsup_{N \to \infty}
\frac{1}{|\Gamma_{N}|}\log \mathbb{P}_{N}^{H}
\left(\frac{1}{|\Gamma_{N}|}\int_{0}^{T}V_{x, N, \epsilon, K}(\eta(t))dt \ge \delta \right) = - \infty,
$$
where 
$$
V_{x, N, \epsilon, K}(\eta):=\sum_{\underline \sigma \in \Gamma_{N}}\left| \widetilde f_{\underline \sigma x, K}(\eta) - \langle f_{x}\rangle (\overline \eta_{\underline \sigma x_{0}, \epsilon N})\right|.
$$
\end{thm}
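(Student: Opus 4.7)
The plan is to follow the Guo--Papanicolaou--Varadhan scheme, decomposing the replacement into a one-block estimate and a two-blocks estimate adapted to the crystal lattice by exploiting both kinds of local averages $\widetilde f$ and $\overline \eta$. By the triangle inequality,
$$
V_{x,N,\epsilon,K}(\eta) \le \sum_{\underline\sigma \in \Gamma_N}\bigl|\widetilde f_{\underline\sigma x, K}(\eta) - \langle f_x\rangle\bigl(\overline\eta_{\underline\sigma x_0, K}\bigr)\bigr| + \sum_{\underline\sigma \in \Gamma_N}\bigl|\langle f_x\rangle\bigl(\overline\eta_{\underline\sigma x_0, K}\bigr) - \langle f_x\rangle\bigl(\overline\eta_{\underline\sigma x_0, \epsilon N}\bigr)\bigr|,
$$
so it suffices to prove the super exponential bound separately for the one-block term (with $N\to\infty$, then $K\to\infty$) and for the two-blocks term (with $N\to\infty$, then $\epsilon\to 0$, then $K\to\infty$).

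Since $H \in C^{1,2}$ is bounded, the weak asymmetry perturbs each jump rate by a factor $1+O(1/N)$, so a Radon--Nikodym/relative entropy argument lets one replace $\mathbb{P}_N^H$ by the law of the symmetric process generated by $N^2 L_N$, at a cost which does not affect super exponential estimates. Combining the Chebyshev/entropy inequality with the Feynman--Kac formula and the Rayleigh--Ritz formula for the principal eigenvalue of $N^2 L_N + \gamma W$ on $L^2(\nu_\rho^N)$, the probability to be estimated is bounded by
$$
\exp\biggl(-|\Gamma_N|\delta + T \sup_{f}\Bigl\{\int W\, f\, d\nu_\rho^N - \tfrac{N^2}{\gamma}\langle -L_N \sqrt f, \sqrt f\rangle_{\nu_\rho^N}\Bigr\}\biggr),
$$
where $W = V_{x,N,\epsilon,K}/|\Gamma_N|$ and the supremum is over densities $f$ with respect to $\nu_\rho^N$. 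It remains to show that this variational expression vanishes in the iterated limit $K\to\infty$ after $\epsilon\to 0$ after $N\to\infty$.

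For the one-block piece, I would localize each summand to the ball $B(\underline\sigma D_{x_0}, K) \subset V_N$, on which both $\widetilde f_{\underline\sigma x, K}$ and $\overline\eta_{\underline\sigma x_0, K}$ are measurable. Restricting $f$ to this ball and using the $\Gamma_N$-invariance of $\nu_\rho^N$ and of $L_N$, one conditions on the number of particles in the block to replace $\nu_\rho^N$ by the corresponding canonical (uniform) measure; the Dirichlet form cost forces $f$ to lie close to canonical equilibrium, under which $\widetilde f_{\underline\sigma x, K}$ concentrates at $\langle f_x\rangle(\overline\eta_{\underline\sigma x_0, K})$ by the equivalence of ensembles and the ergodicity of the symmetric exclusion dynamics on the $K$-ball. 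Because $X$ is tiled exactly by $\Gamma$-translates of $D_{x_0}$, summing over $\underline\sigma \in \Gamma_N$ and dividing by $|\Gamma_N|$ reduces matters to the standard one-block averaging of \cite{GPV, KL}.

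The two-blocks estimate, comparing $\overline\eta_{\underline\sigma x_0, K}$ with $\overline\eta_{\underline\sigma x_0, \epsilon N}$, is handled by the usual coupling of two disjoint $K$-subblocks inside the $\epsilon N$-ball together with the spectral gap of the symmetric exclusion on a box of radius $K$ (of order $K^{-2}$), which yields asymptotic independence of the two block densities. The main obstacle, and the only place where the crystal-lattice geometry genuinely enters, is to verify that these Euclidean arguments survive when the blocks are unions of $\Gamma$-translates of a fundamental domain rather than genuine cubes: one must invoke the combinatorial approximation results of Appendix A to show that $|B(D_{x_0}, R)|$ and its boundary $|\partial B(D_{x_0}, R)|$ have the expected orders $R^d$ and $R^{d-1}$, and to ensure that the spectral gap of symmetric exclusion on such a ``crystal box'' is uniform in its position within $V_N$. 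With these geometric inputs in place, the computation closes in the same manner as in \cite{GPV, KOV}.
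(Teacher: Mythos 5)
Your overall architecture coincides with the paper's: reduce from $\mathbb{P}_N^H$ to the symmetric process via the Radon--Nikodym bound $d\mathbb{P}_N^H/d\mathbb{P}_N\le \exp C(H,T)|\Gamma_N|$ (and then to equilibrium via $\mathbb{P}_N\le 2^{|V_N|}\mathbb{P}_N^{eq}$), apply Chebyshev, Feynman--Kac and the variational formula for the principal eigenvalue of $N^2L_N+aV_{x,N,\epsilon,K}$, and thereby reduce everything to a static estimate over $\Gamma_N$-invariant measures $\mu$ with $I_N(\mu)\le C|V_N|/N^2$, which is then split into a one-block and a two-blocks estimate. For the one-block step you take a genuinely different (but legitimate) route: conditioning on the particle number, canonical measures and equivalence of ensembles. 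The paper instead passes to limit points of $\{\mu_N\}$ in $\mathcal{P}(Z)$, shows the localized Dirichlet form $I^{\circ}_{\Lambda}$ vanishes in the limit, and invokes de Finetti's theorem to write every limit point as $\int_0^1\nu_\rho\,\lambda(d\rho)$; this compactness argument buys you freedom from any quantitative relaxation input.

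The one place where your sketch has a real gap is the two-blocks estimate. Comparing $\overline\eta_{x_0,K}$ with $\overline\eta_{\sigma x_0,K}$ for $L<|\sigma|\le\epsilon N$ does not follow from a spectral gap of order $K^{-2}$ on a $K$-box: that controls relaxation \emph{inside} one block and says nothing about decorrelating two blocks separated by a distance of order $\epsilon N$. The decisive input is the moving-particle (path) estimate (Lemma 4.2 of the paper): for $\Gamma_N$-invariant $F$ one has $\int(\pi_{x_0,\underline\sigma x_0}F)^2d\nu^N\le 4\,d(x_0,\underline\sigma x_0)^2\sum_{e\in E^0}\int(\pi_eF)^2d\nu^N$, which combined with the quasi-isometry $d(x_0,\sigma x_0)\le c_1|\sigma|\le c_1\epsilon N$ and $I_N(\mu)\le C|V_N|/N^2$ shows that the Dirichlet form of the long exchange bond between the two blocks is $O(\epsilon^2)$; it is precisely the cancellation $(\epsilon N)^2\cdot N^{-2}=\epsilon^2$ that lets one send $\epsilon\to0$ and conclude, via de Finetti on $Z\times Z$, that limit points have the form $\int\nu_\rho\otimes\nu_\rho\,\lambda(d\rho)$. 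Without this ingredient the two-blocks step does not close. Your geometric worries are the right ones but slightly misplaced: what the crystal structure must supply is the two-sided comparison $c_1'|\sigma|-c_2\le d(x_0,\sigma x_0)\le c_1|\sigma|$, the volume counts $|B(D_{x_0},R)|= |V_0|\,|\{|\sigma|\le R\}|\asymp R^d$ with controlled boundary terms (used to replace $\overline\eta_{x_0,\epsilon N}$ by the average of $\overline\eta_{z,K}$ over the annulus $L<|\sigma|\le\epsilon N$), and the fact that each $K$-block is a union of $\Gamma$-translates of the fundamental domain so that $\Gamma_N$-invariance can be exploited; uniformity of a spectral gap over positions is not needed in the paper's scheme, and would in any case be automatic from $\Gamma_N$-invariance.
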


Note that for every $x \in D_{x_{0}}$, $\overline \eta_{\underline \sigma x_{0}, R}=\overline \eta_{\underline \sigma x, R}$ $(R > 0)$.

We denote by $\mathbb{P}_{N}$ the corresponding distribution on $D([0,T], Z_{N})$ of continuous time Markov chain $\eta^{N}(t)$ generated by $N^{2}L_{N}$ with the initial measure $\mu^{N}$.
Furthermore, we denote by $\mathbb{P}_{N}^{eq}$ the corresponding distribution on $D([0,T], Z_{N})$ of continuous time Markov chain $\eta^{N}(t)$ generated by $N^{2}L_{N}$ with the initial measure $\nu^{N}_{1/2}$, i.e., an equilibrium measure.
We denote by $\mathbb{E}_{N}^{H}$ the expectation with respect to $\mathbb{P}_{N}^{H}$, by $\mathbb{E}_{N}$ the one with respect to $\mathbb{P}_{N}$ and by $\mathbb{E}_{N}^{eq}$ the one with respect to $\mathbb{P}_{N}^{eq}$.
For a probability measure $\mu$ on some probability space, we also denote by $\mathbb{E}_{\mu}$ the expectation with respect to $\mu$.

By the following proposition, we reduce the super exponential estimate for $\mathbb{P}_{N}^{H}$ to the one for $\mathbb{P}_{N}$.

\begin{pro}\label{Radon-Nikodym}
There exists a constant $C(H, T)>0$ such that
$$
\frac{d\mathbb{P}_{N}^{H}}{d\mathbb{P}_{N}} \le \exp C(H, T)|\Gamma_{N}|.
$$
\end{pro}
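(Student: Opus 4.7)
The plan is to realize $\mathbb{P}_N^H$ as a time-dependent Doob-type (Girsanov) tilt of $\mathbb{P}_N$ by an explicit exponential functional of $\eta$, and then to bound the resulting density pointwise using smoothness of $H$ together with the harmonicity of $\Phi$.

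I first introduce the strictly positive functional
$$\Psi_t(\eta) := \exp\Bigl(\sum_{x \in V_N} H(t, \Phi_N(x))\,\eta_x\Bigr),\qquad t \in [0,T],\ \eta \in Z_N.$$
A direct computation gives $\Psi_t(\eta^e)/\Psi_t(\eta) = \exp\bigl(\Delta_e H\,(\eta_{oe}-\eta_{te})\bigr)$, where $\Delta_e H := H(t,\Phi_N(te)) - H(t,\Phi_N(oe))$. A case analysis on $(\eta_{oe},\eta_{te})\in\{0,1\}^2$ shows that multiplying the symmetric swap rate of $L_N$ across the unoriented pair $\{e,\bar e\}$ by this ratio yields exactly the WASEP swap rate encoded in $c^H(e,\eta,t)$. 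Hence $N^2 L_N^H$ is the time-dependent Doob transform of $N^2 L_N$ by $\Psi_t$.

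Second, Dynkin's formula applied to the time-dependent functional $\Psi_t$ under the generator $N^2 L_N$ yields that
$$M_t := \frac{\Psi_t(\eta(t))}{\Psi_0(\eta(0))}\exp\Bigl(-\int_0^t \frac{(\partial_s + N^2 L_N)\Psi_s}{\Psi_s}\bigl(\eta(s)\bigr)\,ds\Bigr)$$
is a positive $\mathbb{P}_N$-martingale with $M_0 = 1$. The Girsanov theorem for finite-state Markov jump processes (cf.\ Appendix 1.7 of \cite{KL}) then identifies $M_T$ with $d\mathbb{P}_N^H/d\mathbb{P}_N|_{\mathcal{F}_T}$, since the tilted law solves the martingale problem for $N^2 L_N^H$.

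It remains to bound $M_T$ deterministically. One immediately has $|\log(\Psi_T(\eta(T))/\Psi_0(\eta(0)))| \leq 2\|H\|_\infty|V_N| = O(|\Gamma_N|)$ and $|\Psi_s^{-1}\partial_s\Psi_s(\eta)| \leq \|\partial_t H\|_\infty|V_N| = O(|\Gamma_N|)$. For the generator term
$$\Psi_s^{-1}N^2 L_N\Psi_s(\eta) = \frac{N^2}{4}\sum_{e \in E_N}\bigl(e^{\Delta_e H(\eta_{oe}-\eta_{te})} - 1\bigr),$$
the Taylor expansion $e^y - 1 = y + y^2/2 + O(y^3)$ together with $|\Delta_e H| \leq C/N$ (adjacent vertices in $\Phi_N(X_N)$ lie within $O(1/N)$ and $H \in C^{1,2}$) shows that the quadratic and higher-order contributions total $O(N^2 |E_N| / N^2) = O(|\Gamma_N|)$. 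The linear part rearranges, using $\Delta_{\bar e}H = -\Delta_e H$, to $\tfrac{N^2}{2}\sum_{x\in V_N}\eta_x\sum_{e\in E_{N,x}}[H(s,\Phi_N(te)) - H(s,\Phi_N(x))]$; a second-order Taylor expansion of $H$ around $\Phi_N(x)$, combined with the harmonicity $\sum_{e\in E_{N,x}}v(e) = 0$, gives $\sum_{e\in E_{N,x}}[H(s,\Phi_N(te))-H(s,\Phi_N(x))] = O(1/N^2)$ uniformly in $x$, so the linear part is also $O(|\Gamma_N|)$. Collecting these bounds yields $|\log M_T| \leq C(H,T)|\Gamma_N|$, as claimed.

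The main obstacle is precisely the cancellation in the generator term just above: the naive estimate $|e^{\Delta_e H}-1| = O(1/N)$ alone would give only $O(N|\Gamma_N|)$, one power of $N$ too large. The harmonicity of $\Phi$ is what kills the dangerous linear contribution, and this is where the geometric structure of the crystal lattice enters an otherwise soft Girsanov argument.
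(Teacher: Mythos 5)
Your proposal is correct and follows essentially the same route as the paper: both identify $d\mathbb{P}_N^H/d\mathbb{P}_N$ with the exponential (Feynman--Kac/Girsanov) martingale built from $\Psi_t(\eta)=\exp\bigl(\sum_x H(t,\Phi_N(x))\eta_x\bigr)$, and both control the exponent by Taylor-expanding $e^{\Delta_e H}-1$ and using the harmonicity $\sum_{e\in E_{N,x}}{\bf v}(e)=0$ to cancel the order-$N$ linear term, leaving everything $O(|\Gamma_N|)$. The only cosmetic difference is that the paper splits the generator term into a single-site piece and an $\eta_{oe}\eta_{te}$ piece before estimating, whereas you expand $e^{\Delta_e H(\eta_{oe}-\eta_{te})}-1$ directly; the cancellation and the final bound are identical.
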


\begin{proof}
To simplify the notation, put $H(t, x)=H(t, \Phi_{N}(x))$ for $x \in V_{N}$.
To calculate the Radon-Nikodym derivative:
\begin{align*}
\frac{d\mathbb{P}_{N}^{H}}{d\mathbb{P}_{N}}	&=\exp\Bigg[\sum_{x \in V_{N}}H(T, x)\eta_{x}(T)-\sum_{x \in V_{N}}H(0, x)\eta_{x}(0) \\
& \ \ \ \ \ \ \ \ \ \ \ \ \ \ \ \ \ \ \ \ \ \ \ \ \ \ \ \ \ \ \ \ \ \ \ \ \ \ \  -\int_{0}^{T}e^{-\sum_{x \in V_{N}}H(t, x)\eta_{x}(t)}(\partial_{t}+N^{2}L_{N})e^{\sum_{x \in V_{N}}H(t, x)\eta_{x}(t)}dt\Bigg]   \\
&=\exp\Bigg[\sum_{x \in V_{N}}H(T, x)\eta_{x}(T)-\sum_{x \in V_{N}}H(0, x)\eta_{x}(0) -\int_{0}^{T}\Bigg\{\sum_{x \in V_{N}}\frac{\partial H}{\partial t}(t,x)\eta_{x}(t) \\
& \ \ \ \ \ \ \ \ \ \ \ \ \ \ + \frac{N^{2}}{4}\sum_{e \in E_{N}}\Big(\eta_{oe}(1-\eta_{te})e^{H(t,te)-H(t,oe)} + \eta_{te}(1-\eta_{oe})e^{H(t,oe)-H(t,te)}  \\
& \ \ \ \ \ \ \ \ \ \ \ \ \ \ \ \ \ \ \ \ \ \ \ \ \ \ \ \ \ \ \ \ \ \ \ \ \ \ \ \ \ \ \ \ \ \ \ \ \ \ \ \ \ \ \ \ \ \ \ \ \ \ \ \ \ \ \ \ \ \ \ \ \ \ \ \ \ \ -\eta_{oe}(1-\eta_{te}) - \eta_{te}(1-\eta_{oe})\Big)\Bigg\}dt\Bigg]   \\
&=\exp\Bigg[\sum_{x \in V_{N}}H(T, x)\eta_{x}(T)-\sum_{x \in V_{N}}H(0, x)\eta_{x}(0) 
 -\int_{0}^{T}\Big\{\sum_{x \in V_{N}}\frac{\partial H}{\partial t}(t,x)\eta_{x}(t) \\
&+ \frac{N^{2}}{2}\sum_{x \in V_{N}}\sum_{e \in E_{N,x}}\eta_{x}\left(e^{H(t,te)-H(t,oe)}-1\right)
- \frac{N^{2}}{4}\sum_{e \in E_{N}}\left(e^{H(t,te)-H(t,oe)}+e^{H(t,oe)-H(t,te)}-2\right)\eta_{oe}\eta_{te}\Big\}dt\Bigg].   \\
\end{align*}

By the inequality $|e^{z} - 1 -z -(1/2)z^{2}| \le (1/6)|z|^{3}e^{|z|}$ for $z \in \mathbb{R}$, we have that
\begin{align*}
&\Big|N^{2}\sum_{e \in E_{N, x}}\left[e^{H(t, te)-H(t, oe)}-1\right]  \\
&-N^{2}\sum_{e \in E_{N, x}}\left(H(t, te)- H(t, oe)\right) - \frac{1}{2}N^{2}\sum_{e \in E_{N, x}}\left(H(t, te)- H(t, oe)\right)^{2}\Big|  \\
&\le \frac{1}{6}N^{2}\sum_{e \in E_{N, x}}\left|H(t, te)- H(t, oe)\right|^{3}e^{\left|H(t, te)- H(t, oe)\right|},
\end{align*}
and thus, since $\Phi$ is harmonic,
\begin{align*}
&N^{2}\sum_{e \in E_{N, x}}\left[e^{H(t, te)-H(t, oe)}-1\right]	\\
&=\frac{1}{2}\sum_{e \in E_{N, x}}\sum_{i,j=1}^{d}\frac{\partial^{2}H}{\partial x_{i} \partial x_{j}}(t, x)v_{i}(e)v_{j}(e)
+\frac{1}{2}\sum_{e \in E_{N, x}}\left(\nabla_{{\bf v}(e)}H(t, x)\right)^{2} + o_{N}.
\end{align*}

Furthermore, 
$$
N^{2}(e^{H(t, te)-H(t, oe)}+e^{H(t, oe)-H(t, te)}-2)=\left(\nabla_{{\bf v}(e)}H(t,oe)\right)^{2} + o_{N}.
$$

Hence, there exists a constant $C(H, T)>0$ depending only on $H$ and $T$ such that $d\mathbb{P}^{H}_{N}/d\mathbb{P_{N}}$ is bounded from above by $\exp C(H, T)|\Gamma_{N}|$.
It completes the proof.
\end{proof}

The super exponential estimate for $\mathbb{P}_{N}$ induces the one for $\mathbb{P}_{N}^{H}$ since it holds that for any Borel sets $A \subset D([0, T], Z_{N})$, $\mathbb{P}_{N}^{H}(A)\le \left(\exp C(H, T)|\Gamma_{N}|\right)\mathbb{P}_{N}(A)$.
Furthermore, it is enough to prove the super exponential estimate for $\mathbb{P}_{N}^{eq}$ since for any Borel sets $A \subset D([0, T], Z_{N})$, $\mathbb{P}_{N}(A) \le 2^{|V_{N}|} \mathbb{P}_{N}^{eq}(A)$.

Let $\mathcal{P}(Z_{N}), \mathcal{P}(Z)$ be the spaces of probability measures on $Z_{N}, Z$, respectively. Define $\nu^{N}:=\otimes_{V_{N}}\nu^{1}_{1/2}$, $\nu:=\otimes_{V}\nu^{1}_{1/2}$ the $(1/2)$-Bernoulli measure on $Z_{N}$, $Z$, respectively. Here we introduce a functional on $\mathcal{P}(Z_{N})$, which is the Dirichlet form for the density function.

\begin{Def}
For $\mu \in \mathcal{P}(Z_{N})$, put the density $\phi=d\mu/d\nu^{N}$. 
The Dirichlet form of $\sqrt \phi$ is defined by
$$
I_{N}(\mu)=-\int_{Z_{N}}\sqrt \phi L_{N} \sqrt \phi d\nu^{N}.
$$
\end{Def}

Note that
$I_{N}(\mu)=(1/4)\int_{Z_{N}}\sum_{e \in E_{N}}(\pi_{e}\sqrt \phi)^{2}d\nu^{N} \ge 0$.

{\it Remark.}

The functional $I_{N}$ is also called the $I$-functional in the different literatures.

Let us define the subset of the space of probability measures on $Z_{N}$ by for $C > 0$,
$$
\mathcal{P}_{N,C}:=\left\{\mu \in \mathcal{P}(Z_{N}) \ | \ \text{$\mu$ is $\Gamma_{N}$-invariant and $I_{N}(\mu) \le C\frac{|V_{N}|}{N^{2}}$}\right\}.
$$

The proof of the super exponential estimate for $\mathbb{P}_{N}^{eq}$ is reduced to the following:
\begin{thm}\label{local ergodic theorem}
For every $C > 0$ and every $x \in D_{x_{0}}$,
$$
\lim_{K \to \infty}\limsup_{\epsilon \to 0}\limsup_{N \to \infty} \sup_{\mu \in \mathcal{P}_{N,C}}\mathbb{E}_{\mu}\left|\widetilde f_{x, K} - \langle f_{x}\rangle (\overline \eta_{x_{0}, \epsilon N}) \right|=0.
$$
\end{thm}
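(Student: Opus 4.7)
\textbf{Proof plan for Theorem \ref{local ergodic theorem}.} The plan is to follow the classical Guo--Papanicolaou--Varadhan scheme, splitting the statement into a \emph{one-block} and a \emph{two-blocks} estimate via the triangle inequality
$$
\bigl|\widetilde f_{x,K} - \langle f_{x}\rangle(\overline\eta_{x_0,\epsilon N})\bigr|
\le
\bigl|\widetilde f_{x,K} - \langle f_{x}\rangle(\overline\eta_{x_0,K})\bigr|
+
\bigl|\langle f_{x}\rangle(\overline\eta_{x_0,K}) - \langle f_{x}\rangle(\overline\eta_{x_0,\epsilon N})\bigr|.
$$
Since $\rho \mapsto \langle f_x\rangle(\rho)$ is a polynomial, hence Lipschitz on $[0,1]$, the second term is controlled by $\mathbb{E}_\mu|\overline\eta_{x_0,K}-\overline\eta_{x_0,\epsilon N}|$, which is the object of the two-blocks estimate. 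The first term is the one-block estimate. Throughout, I will exploit the two structural assumptions built into $\mathcal{P}_{N,C}$: the $\Gamma_N$-invariance of $\mu$ (which lets me restrict attention to a fixed reference vertex and use averaged versions freely) and the Dirichlet bound $I_N(\mu)\le C|V_N|/N^2$ (which, after localization, gives a summable bound on the Dirichlet form restricted to any box).

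\textbf{One-block estimate.} Fix $K$ and localize the problem to the ball $B(D_{x_0},K)\subset V$, lifted into $V_N$ for large $N$. Let $\mu_K$ denote the marginal of $\mu$ on this ball; by convexity and the inequality $(\pi_e\sqrt{\phi_K})^2 \le $ conditional average of $(\pi_e\sqrt\phi)^2$, the restricted Dirichlet form of $\mu_K$ is bounded by the full $I_N(\mu)$, hence in particular is bounded independently of $N$ by $C'(K)$. The space of probability measures on the finite configuration space $\{0,1\}^{B(D_{x_0},K)}$ is compact, so along a subsequence $\mu_K$ converges to some $\mu_K^\infty$ with vanishing Dirichlet form, hence supported on configurations invariant under all edge-exchanges in the block. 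Such $\mu_K^\infty$ decomposes as a mixture of canonical (hypergeometric) measures indexed by the block density $\overline\eta_{x_0,K}$. On each canonical ensemble with density $\rho_K$, the $\Gamma$-orbit average $\widetilde f_{x,K}$, which is a mean of $|\{\sigma:|\sigma|\le K\}|$ many copies $f_{\sigma x}$ living on disjoint but uniformly bounded supports (each translate of $D_{x_0}$), concentrates around its expectation; by equivalence of ensembles between canonical and grand-canonical Bernoulli, that expectation is $\langle f_x\rangle(\rho_K) + o_K$. Taking $K\to\infty$ gives the one-block estimate.

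\textbf{Two-blocks estimate.} For $\epsilon N\gg K$, the box $B(D_{x_0},\epsilon N)$ contains roughly $(\epsilon N/K)^d$ disjoint copies of $K$-boxes obtained from the $\Gamma$-translates $\sigma D_{x_0}$. I will use the Dirichlet form bound to show that any two nearby $K$-boxes can be coupled so that their empirical densities coincide up to $o_K(1)+o_\epsilon(1)$. Specifically, for each pair of blocks one constructs an exchange path along the edges of $X$ and uses the telescoping estimate
$$
\mathbb{E}_\mu\bigl|\overline\eta_{y,K}-\overline\eta_{y',K}\bigr|^2 \lesssim \frac{|\text{path}|}{K^d}\,I_{N,\text{local}}(\mu)
$$
available from the Cauchy--Schwarz / path inequality for the exclusion Dirichlet form. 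Summing over all pairs of $K$-blocks inside the $\epsilon N$-box and using $I_N(\mu)\le C|V_N|/N^2$ yields that the averages over different $K$-blocks are close to the overall average $\overline\eta_{x_0,\epsilon N}$, first letting $N\to\infty$ then $\epsilon\to 0$ (with $K$ fixed, then $K\to\infty$).

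\textbf{Main obstacle.} The new difficulty relative to $\mathbb{Z}^d$ is the local inhomogeneity of $X$: the block $B(D_{x_0},K)$ is only $\Gamma$-translation invariant, not fully translation invariant, and the block-canonical ensembles are parametrized by density in a way that still depends on the intrinsic geometry of the fundamental domain $D_{x_0}$. This is exactly why two different averages are needed: the average $\overline{(\cdot)}$ over a union of fundamental domains is the natural density parameter for the canonical ensemble (because the ensemble conserves the number of particles per $K$-ball, not per $\Gamma$-orbit), whereas $\widetilde f_{\cdot,K}$ averages along a $\Gamma$-orbit so that periodicity $f_{\sigma x}(\sigma\eta)=f_x(\eta)$ can be used to identify the orbit average with the expected value $\langle f_x\rangle$. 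The hard step is carrying out the equivalence-of-ensembles argument in this inhomogeneous setting: one must verify that, uniformly in the fundamental-domain density, the Bernoulli expectation $\langle f_x\rangle(\rho)$ is a good approximation to the canonical-ensemble expectation of $\widetilde f_{x,K}$, with an error that depends only on $K$ and the combinatorics of $X_0$, not on $N$. Once this is done, the two-blocks estimate is essentially a path argument as in the Euclidean case, because $X$ is quasi-isometric to $\mathbb{Z}^d$ via $\Phi$ and all path lengths needed are comparable to those on the Euclidean lattice of the same diameter.
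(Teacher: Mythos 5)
Your overall architecture --- the triangle inequality through $\langle f_{x}\rangle(\overline\eta_{x_{0},K})$, the Lipschitz continuity of $\rho\mapsto\langle f_{x}\rangle(\rho)$, and the reduction to a one-block and a two-blocks estimate --- is exactly the decomposition the paper uses to deduce Theorem \ref{local ergodic theorem} from Theorems \ref{the one-block estimate} and \ref{the two-blocks estimate}. However, two steps in your execution have genuine gaps. In the one-block part you assert that the Dirichlet form localized to $B(D_{x_{0}},K)$ is ``bounded independently of $N$'' and then conclude that subsequential limits have \emph{vanishing} Dirichlet form; boundedness does not give vanishing. The missing ingredient is the $\Gamma_{N}$-invariance of $\mu$: each $\Gamma_{N}$-orbit of edges contributes equally to $I_{N}(\mu)$, so the $O(K^{d})$ edges inside the block contribute at most $(2K)^{d}I_{N}(\mu)/|\Gamma_{N}|\le (2K)^{d}C|V_{0}|/N^{2}\to 0$. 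Relatedly, your route through canonical (hypergeometric) ensembles and equivalence of ensembles --- which you yourself identify as the hard unresolved step in the inhomogeneous setting --- is left unproved, and it is avoidable: the paper extends $\mu$ periodically to $Z$, takes limit points in $\mathcal{P}(Z)$, shows they are exchangeable (the localized Dirichlet form vanishes for every fixed $K$), and applies de Finetti to write them as mixtures $\int_{0}^{1}\nu_{\rho}\,\lambda(d\rho)$ of product Bernoulli measures; after that only the uniform variance bounds $\mathbb{E}_{\nu_{\rho}}\bigl[\widetilde f_{x,K}-\langle f_{x}\rangle(\rho)\bigr]^{2}=O(K^{-d})$ and $\mathbb{E}_{\nu_{\rho}}\bigl|\overline\eta_{x_{0},K}-\rho\bigr|^{2}=O(K^{-d})$ are needed, and no equivalence of ensembles enters at all.

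The second gap is in the two-blocks step: the displayed ``telescoping estimate'' $\mathbb{E}_{\mu}\bigl|\overline\eta_{y,K}-\overline\eta_{y',K}\bigr|^{2}\le C\,(|\mathrm{path}|/K^{d})\,I_{N,\mathrm{local}}(\mu)$ is not a valid inequality. A Dirichlet-form bound controls how far $\mu$ is from being invariant under the exchange operators along the path, not the expectation of a difference of block densities; indeed, under the equilibrium measure $\nu_{1/2}$ the right-hand side is zero while the left-hand side is of order $K^{-d}$ by ordinary Bernoulli fluctuations. The correct argument, as in Lemma \ref{path} and Lemma \ref{inclusion}, bounds the \emph{exchange} Dirichlet form $I^{(x_{0},x'_{0})}_{\Lambda\times\Lambda}(\hat\sigma\mu)$ by $4d(x_{0},\sigma x_{0})^{2}$ times the per-orbit bond Dirichlet form, which is $O(\epsilon^{2})$ for $|\sigma|\le\epsilon N$; the vanishing of this quantity in the iterated limit forces the limit points of the coupled laws $\hat\sigma\mu=\mathrm{law}(\eta,\sigma\eta)$ to be exchangeable on $Z\times Z$, hence of the form $\int_{0}^{1}\nu_{\rho}\otimes\nu_{\rho}\,\lambda(d\rho)$ by de Finetti, and only then does the law of large numbers in each block give $\mathbb{E}\bigl|\overline\eta_{x_{0},K}-\overline\eta'_{x'_{0},K}\bigr|\to 0$ as $K\to\infty$. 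Your single inequality conflates these two stages, so as written that step would fail; the rest of your plan (quasi-isometry of $X$ with $\mathbb{Z}^{d}$ via $\Phi$, comparability of path lengths, order of limits) is consistent with the paper.
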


First, we prove Theorem\ref{super exponential estimate} by using Therem\ref{local ergodic theorem}.

\begin{proof}[Proof of Theorem\ref{super exponential estimate}]
Fix any $T >0$.
By the above argument, it is enough to show that
for any $\Gamma$-periodic local function bundles $f:V\times Z \to \mathbb{R}$, for every $x \in D_{x_{0}}$ and for every $\delta >0$, 
$$
\lim_{K \to \infty}\limsup_{\epsilon \to 0}\limsup_{N \to \infty}
\frac{1}{|\Gamma_{N}|}\log \mathbb{P}_{N}^{eq}
\left(\frac{1}{|\Gamma_{N}|}\int_{0}^{T}V_{x, N, \epsilon, K}(\eta(t))dt \ge \delta \right) = - \infty,
$$
where 
$$
V_{x, N, \epsilon, K}(\eta):=\sum_{\underline \sigma \in \Gamma_{N}}\left| \widetilde f_{\underline \sigma x, K}(\eta) - \langle f_{x}\rangle (\overline \eta_{\underline \sigma x_{0}, \epsilon N})\right|.
$$

By the Chebychev inequality, for every  $a >0$ and every $\delta >0$,
$$
\mathbb{P}_{N}^{eq}\left(\frac{1}{|\Gamma_{N}|}\int_{0}^{T}V_{x, N, \epsilon, K}dt \ge \delta \right)
\le \mathbb{E}_{N}^{eq}\exp\left(a \int_{0}^{T}V_{x, N, \epsilon, K}dt - a \delta |\Gamma_{N}| \right).
$$

Now, the operator $N^{2}L_{N} + a V_{x, N, \epsilon, K}$ acting on $L^{2}(Z_{N}, \nu^{N})$ is self-adjoint for all $a >0$ and all $x \in D_{x_{0}}$.
Let $\lambda_{x, N, \epsilon, K}(a)$ be the largest eigenvalue of $N^{2}L_{N} + a V_{x, N, \epsilon, K}$. By using the Feynman-Kac formula, 
$$
\mathbb{E}_{N}^{eq}\exp\left(a \int_{0}^{T}V_{x, N, \epsilon, K}dt\right) \le \exp T\lambda_{x, N, \epsilon, K}(a).
$$

Therefore, it is suffice to show that for every $a >0$,
\begin{equation}\label{eigenvalue}
\lim_{K \to \infty}\limsup_{\epsilon \to 0}\limsup_{N \to \infty}\frac{1}{|\Gamma_{N}|}\lambda_{x, N, \epsilon, K}(a)=0,
\end{equation}
since (\ref{eigenvalue}) implies that
$$
\lim_{K \to \infty}\limsup_{\epsilon \to 0}\limsup_{N \to \infty}
\frac{1}{|\Gamma_{N}|}\log \mathbb{P}_{N}^{eq}
\left(\frac{1}{|\Gamma_{N}|}\int_{0}^{T}V_{x, N, \epsilon, K}(\eta(t))dt \ge \delta \right) \le -a \delta,
$$
and we obtain the theorem by taking $a$ to the infinity.

By the variational principle, the largest eigenvalue $\lambda_{x, N, \epsilon, K}(a)$ is represented by the following:
$$
\lambda_{x, N, \epsilon, K}(a)=\sup_{\mu \in \mathcal{P}(Z_{N})}\left\{a\int_{Z_{N}}V_{x, N, \epsilon, K} d\mu - N^{2}I_{N}(\mu)\right\}.
$$
See \cite{KL} Appendix 3 for more details.
 
Denote the average of $\mu$ by $\Gamma_{N}$-action by
$
\widetilde \mu :=(1/|\Gamma_{N}|)\sum_{\underline \sigma \in \Gamma_{N}}\mu \circ \underline \sigma.
$
Then $\widetilde \mu$ is $\Gamma_{N}$-invariant so that
$$
\frac{1}{|\Gamma_{N}|}\int_{Z_{N}}V_{x, N, \epsilon, K}d\mu = \frac{1}{|\Gamma_{N}|}\mathbb{E}_{\mu}\left[V_{x, N, \epsilon, K}\right]=\mathbb{E}_{\widetilde \mu}\left|\widetilde f_{x, K} - \langle f_{x}\rangle(\overline \eta_{x_{0}, \epsilon N})\right|.
$$
The functional $I_{N}(\cdot)$ is also $\Gamma_{N}$-invariant, i.e., $I_{N}(\mu \circ \underline \sigma)=I_{N}(\mu)$ for $\mu \in \mathcal{P}(Z_{N})$, $\underline \sigma \in \Gamma_{N}$. 
Thus, it is suffice to consider $\Gamma_{N}$-invariant measures $\mu$ to estimate the largest eigenvalue $\lambda_{x, N, \epsilon, K}(a)$.
Furthermore, it is suffice to consider the case where $\mu$ satisfies
$
\int_{Z_{N}}V_{x, N, \epsilon, K}d\mu \ge N^{2}I_{N}(\mu).
$
There exists a constant $C(f)>0$ depending on $f$ such that $V_{x, N, \epsilon, K} \le C(f)|\Gamma_{N}|$, thus we reduce $\mu \in \mathcal{P}(Z_{N})$ to every $\Gamma_{N}$-invariant measure satisfying that for every $C >0$,
$
I_{N}(\mu)\le C|V_{N}|/N^{2}.
$
This shows that we can reduce to $\mathcal{P}_{N,C}$ for every $C>0$, and thus it is enough to show for every $C >0$ and every $x \in D_{x_{0}}$,
$$
\lim_{K \to \infty}\limsup_{\epsilon \to 0}\limsup_{N \to \infty}\sup_{\mu \in \mathcal{P}_{N, C}}\mathbb{E}_{\mu}\left|\widetilde f_{x, K} - \langle f_{x} \rangle (\overline \eta_{x_{0}, \epsilon N}) \right|=0,
$$
to obtain (\ref{eigenvalue}). This follows from Theorem\ref{local ergodic theorem}. It completes the proof.
\end{proof}

\subsection{The one-block estimate}\label{section of the one-block estimate}

In this section, we prove the one-block estimate.
We regard a probability measure $\mu$ on $Z_{N}$ as one on $Z$ by periodic extension. 
Let $pr_{N}:V \to V_{N}$ be the covering map by $N\Gamma$-action, and define the periodic inclusion
$i^{N}_{per}:Z_{N} \hookrightarrow Z$ by $(i^{N}_{per}\eta)_{z}:=\eta_{pr_{N}(z)}, \eta \in Z_{N}, z \in V$.
We identify $\mu$ on $Z_{N}$ with its push forward by $i^{N}_{per}$. 
On the other hand, we identify an $N\Gamma$-invariant probability measure on $Z$ with a probabiltiy measure on $Z_{N}$.

First, for a finite subgraph $\Lambda=(V_{\Lambda},E_{\Lambda})$ of X, we define the restricted state space $Z_{\Lambda}:=\{0,1\}^{V_{\Lambda}}$ and the $(1/2)$-Bernoulli measure by $\nu^{\Lambda}:=\otimes_{V_{\Lambda}}\nu^{1}_{1/2}$ on $Z_{\Lambda}$. 
Let us define the operator acting on $L^{2}(Z, \nu)$ by $L^{\circ}_{\Lambda}:=(1/2)\sum_{e \in E_{\Lambda}}\pi_{e}$.
For $\mu \in \mathcal{P}(Z)$, $\mu |_{\Lambda}$ stands for the restriction of $\mu$ on $Z_{\Lambda}$ and $\phi_{\Lambda}:=d\mu|_{\Lambda}/d\nu^{\Lambda}$ its density. 
We also define the corresponding Dirichlet form of $\sqrt \phi_{\Lambda}$ by
$$
I^{\circ}_{\Lambda}(\mu):= - \int_{Z_{\Lambda}} \sqrt{\phi_{\Lambda}}L^{\circ}_{\Lambda}\sqrt{\phi_{\Lambda}}d\nu^{\Lambda}.
$$
For large enough $N$, we regard $\Lambda$ as a subgraph of $X_{N}$ by taking a suitable fundamental domain in $V$ for $N\Gamma$-action.

By the convexity of the Dirichlet form,
$$
I^{\circ}_{\Lambda}(\mu) \le \frac{1}{4}\sum_{e \in E_{\Lambda}}\int_{Z_{N}}(\pi_{e}\sqrt{\phi})^{2}d\nu^{N},
$$
by putting $\phi:=d\mu/d\nu^{N}$. 

The one-block estimate is stated as follows:

\begin{thm}\label{the one-block estimate}{\upshape (The one-block estimate.)}
For every $\Gamma$-periodic local function bundles, $f:V\times Z \to \mathbb{R}$, every $x \in D_{x_{0}}$ and every $C >0$,
$$
\lim_{K \to \infty}\limsup_{N \to \infty}\sup_{\mu_{N} \in \mathcal{P}_{N, C}}\mathbb{E}_{\mu_{N}}\left| \widetilde f_{x, K} - \langle f_{x} \rangle \left(\overline \eta_{x_{0}, K}\right) \right|=0.
$$
\end{thm}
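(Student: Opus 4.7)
The plan is to adapt the standard Guo--Papanicolaou--Varadhan one-block argument to the crystal lattice via the $\Gamma$-periodicity of $f$ and the block $B(D_{x_0},K)$.

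\textbf{Step 1: Localisation and collapse of the Dirichlet form.} Let $r=r(f)$ be the support radius of $f_x$ and set $\Lambda:=B(D_{x_0},K+r)\subset X$; then both $\widetilde f_{x,K}$ and $\overline\eta_{x_0,K}$ are functions of $\eta|_\Lambda$. The key input is the combination of the Dirichlet bound $I_N(\mu_N)\le C|V_N|/N^2$ with the $\Gamma_N$-invariance of $\mu_N$: since each edge of $X_N$ lies in a $\Gamma_N$-orbit of size $|\Gamma_N|$, each edge-term $\tfrac14\int(\pi_e\sqrt{\phi})^2 d\nu^N$ is bounded by $C|V_0|/N^2$ uniformly in $e$. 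Summing over $e\in E_\Lambda$ and invoking the convexity inequality stated in the paper yields
$$
I^\circ_\Lambda(\mu_N|_\Lambda)\ \le\ \frac{C\,|E_\Lambda|\,|V_0|}{N^2}\ \xrightarrow[N\to\infty]{}\ 0,
$$
uniformly over $\mu_N\in\mathcal{P}_{N,C}$.

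\textbf{Step 2: The limit measure is a mixture of canonical measures.} Let $\mu_*$ be any weak subsequential limit of $\mu_N|_\Lambda$ on the compact space $Z_\Lambda$. Lower semicontinuity of $I^\circ_\Lambda$ gives $I^\circ_\Lambda(\mu_*)=0$, so $\mu_*$ is invariant under every edge-exchange $\eta\mapsto\eta^e$, $e\in E_\Lambda$. Since $\Lambda$ is a connected subgraph of $X$, the adjacent transpositions along its edges generate the full symmetric group on $V_\Lambda$, so $\mu_*$ is exchangeable and decomposes as
$$
\mu_* \ =\ \sum_{k=0}^{|V_\Lambda|} m(k)\,\nu^\Lambda_{k,\mathrm{can}},
$$
where $\nu^\Lambda_{k,\mathrm{can}}$ is uniform on configurations in $Z_\Lambda$ with exactly $k$ particles.

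\textbf{Step 3: Replacement via equivalence of ensembles.} Under $\nu^\Lambda_{k,\mathrm{can}}$, the block density $\overline\eta_{x_0,K}$ equals $k/|B(D_{x_0},K)|$ exactly, and differs from $k/|V_\Lambda|$ only through the boundary shell of width $r$, hence by $O(K^{-1})$. For each interior $\sigma$ (those $|\sigma|\le K$ such that the support of $f_{\sigma x}$ lies inside $\Lambda$), the $\Gamma$-periodicity gives $f_{\sigma x}(\eta)=f_x(\sigma^{-1}\eta)$, and the canonical/grand-canonical equivalence of ensembles applied to the fixed-support local function $f_x$ yields
$$
\mathbb{E}_{\nu^\Lambda_{k,\mathrm{can}}}[f_{\sigma x}]\ =\ \langle f_x\rangle\!\left(k/|V_\Lambda|\right)+o_K(1),
$$
uniformly in such $\sigma$; non-interior $\sigma$ form a fraction $O(K^{-1})=o_K(1)$ of $\{|\sigma|\le K\}$. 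For the variance, the covariance $\mathrm{Cov}_{\nu^\Lambda_{k,\mathrm{can}}}(f_{\sigma x},f_{\sigma'x})$ vanishes in the limit once $|\sigma-\sigma'|>2r$ by the same equivalence of ensembles applied to the product $f_{\sigma x}f_{\sigma' x}$, so $\mathrm{Var}_{\nu^\Lambda_{k,\mathrm{can}}}(\widetilde f_{x,K})=O(1/|\{|\sigma|\le K\}|)=o_K(1)$. Chebyshev then gives
$$
\mathbb{E}_{\nu^\Lambda_{k,\mathrm{can}}}\bigl|\widetilde f_{x,K}-\langle f_x\rangle(\overline\eta_{x_0,K})\bigr|=o_K(1),
$$
uniformly in $k$. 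Integrating against $m$ and then removing the subsequence (weak convergence plus boundedness of the integrand) completes the proof.

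\textbf{Main obstacle.} The real content lies in the equivalence of ensembles on $V_\Lambda$. This is purely combinatorial: the canonical measure is uniform on $\{0,1\}^{V_\Lambda}$ subject to one linear constraint, and the standard local CLT argument for i.i.d.\ Bernoulli variables conditioned on their sum transfers verbatim --- it does not see the graph structure of $\Lambda$, only the finite cardinality $|V_\Lambda|$. The geometric subtlety is controlling the discrepancy between the combinatorial ball $B(D_{x_0},K)$ that indexes $\widetilde f_{x,K}$ and the slightly larger support block $\Lambda=B(D_{x_0},K+r)$; because $\Gamma\cong\mathbb{Z}^d$ has polynomial growth of order $d$, the boundary shell contributes a relative volume $O(K^{-1})$, which is absorbed into the $o_K(1)$ error throughout.
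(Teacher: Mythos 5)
Your proposal is correct in outline, but it takes a genuinely different route from the paper at the decomposition step. You restrict the measures to the finite block $Z_{\Lambda}$, take weak limits there, and decompose the resulting exchangeable limit into \emph{canonical} measures $\nu^{\Lambda}_{k,\mathrm{can}}$ (uniform on configurations with $k$ particles), so that the replacement of $\widetilde f_{x,K}$ by $\langle f_{x}\rangle(\overline\eta_{x_{0},K})$ has to go through the equivalence of ensembles --- this is the classical Kipnis--Landim presentation of the one-block estimate. The paper instead extends each $\mu_{N}$ periodically to the whole configuration space $Z=\{0,1\}^{V}$, takes weak limits in $\mathcal{P}(Z)$, observes that the limit has vanishing Dirichlet form on \emph{every} finite block, hence is exchangeable over all of $V$ (connectivity of $X$ suffices here), and applies the de Finetti theorem on the infinite lattice to write the limit directly as a mixture $\int_{0}^{1}\nu_{\rho}\,\lambda(d\rho)$ of grand canonical Bernoulli measures; after that only an elementary $L^{2}$ law of large numbers (variance of order $K^{-d}$) and the uniform continuity of the polynomial $\rho\mapsto\langle f_{x}\rangle(\rho)$ are needed, and the equivalence of ensembles is avoided entirely. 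What your route buys is that everything stays on a finite block; what it costs is exactly the ``main obstacle'' you identify, plus one point that needs more care than you give it: your Step 2 asserts that the induced subgraph on $V_{\Lambda}=B(D_{x_{0}},K+r)$ is connected, but a union of translates $\sigma D_{x_{0}}$ over a word-metric ball need not be connected as an induced subgraph of $X$ (edges of $X$ joining adjacent translates need not correspond to the standard generators $\sigma_{i}$), so without full connectivity the edge-exchanges only give exchangeability within connected components. This is fixable --- enlarge the block, or choose the generating set of $\Gamma$ adapted to the edges leaving $D_{x_{0}}$ --- and the paper's infinite-lattice formulation dodges it automatically, but as written it is the one step of your argument that does not quite close.
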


\begin{proof}
For any probability measure $\mu_{N} \in \mathcal{P}_{N, C}$,
we apply the above argument by setting $\Lambda$ as 
$V_{\Lambda}:=B(D_{x_{0}},K)$ and $E_{\Lambda}:=\{e \in E \ | \ o(e), t(e) \in V_{\Lambda} \}$.
Since $\mu_{N}$ and $\nu^{N}$ are $\Gamma_{N}$-invariant,
\begin{align*}
I^{\circ}_{\Lambda}(\mu_{N})		&\le \frac{1}{4}\sum_{\underline \sigma \in \Gamma_{N}, |\underline \sigma| \le K}\sum_{e \in \underline \sigma E^{0}}\int_{Z_{N}}(\pi_{e}\sqrt{\phi})^{2}d\nu^{N}	\\
											&\le |\{\underline \sigma \in \Gamma_{N} \ | \ |\underline \sigma| \le K\}|\cdot \frac{1}{|\Gamma_{N}|}I_{N}(\mu_{N}).
\end{align*}

Since $\mu_{N} \in \mathcal{P}_{N, C}$ and $|V_{N}|/|\Gamma_{N}|=|V_{0}|$, it holds that 
$$
I^{\circ}_{\Lambda}(\mu_{N})	\le  (2K)^{d}\cdot C\frac{|V_{0}|}{N^{2}} \to 0 \ \ \ \ \ \text{as $N \to \infty$}.
$$

We note that $\mathcal{P}(Z)$ is compact with respect to the weak topology, and thus $\{\mu_{N}\} \subset \mathcal{P}(Z)$ has a subsequence which convergences to some $\mu$ in $\mathcal{P}(Z)$.
Let $\mathcal{A}$ be the set of all limit points of $\{\mu_{N}\}$ in $\mathcal{P}(Z)$.
By the above argument,
$I^{\circ}_{\Lambda}(\mu)=0$ for all $\mu \in \mathcal{A}$.
Therefore, since $I^{\circ}_{\Lambda}(\mu)=(1/4)\sum_{e \in E_{\Lambda}}\sum_{\eta \in Z_{\Lambda}}(\pi_{e}\sqrt{\mu|_{\Lambda}(\eta)})^{2}=0$ for every $\mu \in \mathcal{A}$ by the definition of $I^{\circ}_{\Lambda}$, we obtain that $\mu|_{\Lambda}(\eta^{e})=\mu|_{\Lambda}(\eta)$ for every $e \in E_{\Lambda}$ and every $\eta \in Z_{\Lambda}$.
This shows that random variables $\eta_{x}, x \in V$ are exchangeable under $\mu$.
By the de Finetti theorem, there exists a probability measure $\lambda$ on $[0,1]$ such that $\mu=\int_{0}^{1}\nu_{\rho}\lambda(d\rho)$. 
Since
$$
\limsup_{N \to \infty}\sup_{\mu_{N} \in \mathcal{P}_{N, C}}\mathbb{E}_{\mu_{N}}\left| \widetilde f_{x, K} - \langle f_{x} \rangle \left(\overline \eta_{x_{0}, K}\right) \right| 
\le
\sup_{\mu \in \mathcal{A}}\mathbb{E}_{\mu}\left| \widetilde f_{x, K} - \langle f_{x} \rangle \left(\overline \eta_{x_{0}, K}\right) \right|,
$$
it is enough to show that
$$
\limsup_{K \to \infty}\sup_{\rho \in [0,1]}\mathbb{E}_{\nu_{\rho}}\left| \widetilde f_{x, K} - \langle f_{x} \rangle \left(\overline \eta_{x_{0}, K}\right) \right|=0,
$$
for every $\Gamma$-invariant local function bundles $f$.

By the definition of the $\Gamma$-periodic local function bundle, there exists a constant $L > 0$ such that for every $x \in V$, $f_{x}:Z \to \mathbb{R}$ depends on at most $\{\eta_{z} \ | \ z \in [x]B(D_{x_{0}},L)\}$.
Therefore we obtain that there exists a constant $C(f)>0$ depending only on $f$ such that
$$
\mathbb{E}_{\nu_{\rho}}\left[\widetilde f_{x,K}-\mathbb{E}_{\nu_{\rho}}\left[\widetilde f_{x,K}\right]\right]^{2}\le C(f)\cdot \frac{L^{d}}{K^{d}} \to 0 \ \ \ \ \ \text{as $K \to \infty$}.
$$

Note that $\langle f_{x} \rangle(\rho)=\mathbb{E}_{\nu_{\rho}}\left[\widetilde f_{x,K}\right]$ for every $x \in D_{x_{0}}$ since the Bernoulli measure $\nu_{\rho}$ is $\Gamma_{N}$-invariant.
In addition, we also obtain that there exists a constant $C' >0$ not depending on $\rho$, 
$$
\mathbb{E}_{\nu_{\rho}}\left|\overline \eta_{x_{0},K} - \rho \right|^{2} \le \frac{C'}{K^{d}} \to 0 \ \ \ \ \ \text{as $K \to \infty$}.
$$
Finally, since $\langle f_{x} \rangle(\rho)$ is a polynomial with respect to $\rho$, in particular, uniformly continuous on $[0,1]$, we obtain that
$$
\lim_{K \to \infty}\sup_{\rho \in [0,1]} \mathbb{E}_{\nu_{\rho}}\left| \widetilde f_{x, K} - \langle f_{x} \rangle(\overline \eta_{x_{0}, K}) \right|=0 \ \ \ \ \ \text{for every $x \in D_{x_{0}}$}.
$$
This concludes the theorem
$$
\lim_{K \to \infty}\limsup_{N \to \infty}\sup_{\mu_{N} \in \mathcal{P}_{N, C}} \mathbb{E}_{\mu_{N}}\left| \widetilde f_{x, K} - \langle f_{x} \rangle(\overline \eta_{x_{0}, K}) \right|=0 \ \ \ \ \ \text{for every $x \in D_{x_{0}}$}.
$$
\end{proof}

\subsection{The two-blocks estimate}

In this section, we prove the two-blocks estimate.
We identify a probability measure on $Z_{N}$ with its periodic extension on $Z$ in the same manner as Section\ref{section of the one-block estimate}.

\begin{thm}(The two-blocks estimate)\label{the two-blocks estimate}
For every $C >0$,
$$
\lim_{K \to \infty}\limsup_{\epsilon \to 0}\limsup_{L \to \infty}\limsup_{N \to\infty}\sup_{\sigma \in \Gamma s.t. L<|\sigma|\le \epsilon N} \sup_{\mu_{N} \in \mathcal{P}_{N,C}}\mathbb{E}_{\mu_{N}}\left|\overline \eta_{x_{0},K} - \overline \eta_{\sigma x_{0},K} \right|=0.
$$
\end{thm}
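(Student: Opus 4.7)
The plan is to follow the classical GPV-style two-blocks argument, adapted to the crystal-lattice setting. For each $\sigma\in\Gamma$ with $L<|\sigma|\le\epsilon N$ set $\Lambda_{1}:=B(D_{x_{0}},K)$ and $\Lambda_{2}:=\sigma\Lambda_{1}=B(D_{\sigma x_{0}},K)$; for $L$ large enough relative to $K$ the two blocks are disjoint, and I consider the restricted measure $\mu_{N}|_{\Lambda}$ on $\Lambda:=\Lambda_{1}\sqcup\Lambda_{2}$ with density $\phi:=d\mu_{N}/d\nu^{N}$. First I apply the one-block-style bound (exactly as in the proof of Theorem \ref{the one-block estimate}) separately to each $\Lambda_{i}$: convexity of the Dirichlet form together with the $\Gamma_{N}$-invariance of $\mu_{N}$ gives $I^{\circ}_{\Lambda_{i}}(\mu_{N})\le (2K)^{d}\cdot C|V_{0}|/N^{2}\to 0$, so any weak limit point $\mu$ of $\{\mu_{N}\}$ in $\mathcal{P}(Z)$ is exchangeable (hence, by de Finetti, a mixture of Bernoulli measures) on each $\Lambda_{i}$ individually.

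The new ingredient is to upgrade this to exchangeability across the two blocks by bounding the long-range Dirichlet cost of swapping one site in $\Lambda_{1}$ with one site in $\Lambda_{2}$. Fix $y\in\Lambda_{1}$, $y'\in\Lambda_{2}$ and choose an edge path $\gamma_{y,y'}=(e_{1},\dots,e_{n})$ in $X$ from $y$ to $y'$ with $n\le C_{0}|\sigma|$; such a path exists because the word metric on $\Gamma$ is Lipschitz equivalent to the Euclidean metric induced by the harmonic embedding $\Phi$. Writing the transposition $(y\ y')$ as a palindromic product of $2n-1$ nearest-neighbor transpositions along $\gamma_{y,y'}$ (each edge used at most twice), telescoping, and applying Cauchy--Schwarz together with the exchangeability of $\nu^{N}$, I obtain
\begin{equation*}
\int_{Z_{N}}(\pi_{(y,y')}\sqrt{\phi})^{2}\,d\nu^{N}\;\le\;4n\sum_{e\in\gamma_{y,y'}}\int_{Z_{N}}(\pi_{e}\sqrt{\phi})^{2}\,d\nu^{N}.
\end{equation*}
Now I invoke the $\Gamma_{N}$-invariance of the left-hand side: the same bound holds for every $\tau\in\Gamma_{N}$ with $(y,y')$ replaced by $(\tau y,\tau y')$ and $\gamma_{y,y'}$ by $\tau\gamma_{y,y'}$. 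Averaging over $\tau\in\Gamma_{N}$ and noting that each edge of $X_{N}$ appears in at most $n$ of the translated paths yields
\begin{equation*}
\int_{Z_{N}}(\pi_{(y,y')}\sqrt{\phi})^{2}\,d\nu^{N}\;\le\;\frac{4n\cdot n}{|\Gamma_{N}|}\cdot 4I_{N}(\mu_{N})\;\le\;\frac{16\,n^{2}C|V_{0}|}{N^{2}}\;=\;O(\epsilon^{2}),
\end{equation*}
uniformly in $\sigma,y,y'$ in the allowed range. Hence in the limit $N\to\infty$ followed by $\epsilon\to 0$, the cross-block Dirichlet cost vanishes.

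Combining the within-block vanishing from the first paragraph with the cross-block vanishing from the second, the limit measure $\mu|_{\Lambda}$ is invariant under every transposition of $V_{\Lambda}$. The de Finetti theorem then yields $\mu|_{\Lambda}=\int_{0}^{1}\nu_{\rho}^{\Lambda}\lambda(d\rho)$ for some probability measure $\lambda$ on $[0,1]$, and under such a measure both $\overline{\eta}_{x_{0},K}$ and $\overline{\eta}_{\sigma x_{0},K}$ concentrate at the common parameter $\rho$ as $K\to\infty$ (by the same $L^{2}$ concentration estimate used at the end of the proof of Theorem \ref{the one-block estimate}). This gives $\mathbb{E}_{\mu}|\overline{\eta}_{x_{0},K}-\overline{\eta}_{\sigma x_{0},K}|\to 0$, which is the claimed statement.

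The main obstacle is the averaging-over-translates step. In $\mathbb{Z}^{d}$ one uses coordinate-aligned paths and the multiplicity accounting is transparent; in a general crystal lattice, $\gamma_{y,y'}$ must be built by lifting a near-straight-line path in $\phi(\Gamma)\subset\mathbb{R}^{d}$ through a fixed fundamental-domain section, ensuring simultaneously that (i) $|\gamma_{y,y'}|=O(|\sigma|)$ uniformly in $y,y'$, and (ii) the multiplicity of every edge of $X_{N}$ under $\Gamma_{N}$-translations of $\gamma_{y,y'}$ is bounded by $n$. Both properties follow from the freeness of the $\Gamma_{N}$-action on $X_{N}$ together with the Lipschitz equivalence between the word metric on $\Gamma$ and the Euclidean distance under $\Phi$.
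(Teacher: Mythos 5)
Your proposal is essentially the paper's own argument: the cross-block exchange is controlled by a telescoping path estimate whose Dirichlet cost is of order $(n^{2}/|\Gamma_{N}|)\cdot I_{N}(\mu_{N})=O(\epsilon^{2})$ (this is Lemma \ref{path} combined with Lemma \ref{quasi-iso}, except that the paper uses the $\Gamma_{N}$-periodicity of $\sqrt{\phi}$ to fold each path edge back into a fundamental domain of edges rather than averaging over all translates of the path --- the bookkeeping is equivalent), and the conclusion then follows, as in the paper, from the within-block one-block estimates, exchangeability, de Finetti and the $L^{2}$ concentration of block averages. The one device you gloss over is that your second block $\Lambda_{2}=\sigma\Lambda_{1}$ escapes to infinity as $L\to\infty$, so one cannot literally restrict a weak limit point of $\{\mu_{N}\}$ to the $\sigma$-dependent set $\Lambda_{1}\sqcup\Lambda_{2}$; the paper repairs exactly this by pushing $\mu_{N}$ forward under $\hat{\sigma}(\eta)=(\eta,\sigma\eta)$ to the doubled space $Z\times Z$, which recenters the second block at a fixed copy $x_{0}'$ of $x_{0}$ before passing to the limit --- a purely formal completion of your argument, since all of your estimates are uniform in $\sigma$.
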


Let us denote by $\mathcal{P}(Z\times Z)$ the space of probability measures on $Z \times Z$.
We define the map for $\sigma \in \Gamma$,
$\hat \sigma : Z \to Z\times Z$
by $\hat \sigma(\eta):=(\eta,\sigma \eta)$, $\eta \in Z$. 
For $\mu \in \mathcal{P}(Z)$, we define the push forward of $\mu$ by $\hat \sigma$ via $\hat \sigma \mu:=\mu \circ \hat \sigma^{-1} \in \mathcal{P}(Z\times Z)$. 

Let us denote by $\mathcal{A}^{\circ}_{\epsilon,L} \subset \mathcal{P}(Z \times Z)$ the set of all limit points of $\{\hat \sigma \mu_{N} \ | \ L < |\sigma| \le \epsilon N, \mu_{N} \in \mathcal{P}_{N,C}\}$ in $\mathcal{P}(Z \times Z)$ as $N \to \infty$ and by $\mathcal{A}^{\circ}_{\epsilon} \subset \mathcal{P}(Z \times Z)$ the set of all limit points of $\mathcal{A}^{\circ}_{\epsilon}$ as $L \to \infty$.
We put $(x_{0}, x'_{0}) \in V\times V$, where $x'_{0}$ stands for a copy of $x_{0}$. 
Then, it holds that
$$
\int_{Z}\left|\overline \eta_{x_{0},K} - \overline \eta_{\sigma x_{0},K} \right| \mu(d\eta)=\int_{Z\times Z}\left|\overline \eta_{x_{0},K} - \overline \eta'_{x'_{0},K}\right|(\hat \sigma \mu)(d\eta d\eta').
$$
Note that
\begin{align*}
\limsup_{L \to \infty}\limsup_{N \to\infty}\sup_{\sigma \in \Gamma s.t. L<|\sigma|\le \epsilon N} &\sup_{\mu \in \mathcal{P}_{N,C}}\int_{Z \times Z}\left|\overline \eta_{x_{0},K} - \overline \eta'_{x'_{0},K} \right|(\hat \sigma \mu)(d\eta d\eta')	\\
&\le
\sup_{\mu \in \mathcal{A}^{\circ}_{\epsilon}}\int_{Z \times Z}\left|\overline \eta_{x_{0},K} - \overline \eta'_{x'_{0},K} \right| \mu(d\eta d\eta').
\end{align*}

We introduce two types of generators acting on $L^{2}(Z\times Z, \nu\otimes \nu)$ and the corresponding Dirichlet forms. The first one is used for treating two different states at the same time independently. The second one is used for treating exchanges of particles between two different states.

As in Section\ref{section of the one-block estimate}, we define a subgraph
$\Lambda=(V_{\Lambda}, E_{\Lambda})$ of $X$ 
by setting 
$V_{\Lambda}:=B(D_{x_{0}},K), E_{\Lambda}:=\{e \in E \ | \ o(e), t(e) \in V_{\Lambda} \}$
and the operator acting on $L^{2}(Z, \nu)$ by
$$
L^{\circ}_{\Lambda}:=\frac{1}{2}\sum_{e \in E_{\Lambda}} \pi_{e}.
$$

For $\mu \in \mathcal{P}(Z \times Z)$, we denote by $\mu|_{\Lambda \times \Lambda}$ the restriction of $\mu$ on $Z_{\Lambda} \times Z_{\Lambda}$.
Define the Dirichlet form of $\sqrt{\phi_{\Lambda \times \Lambda}}$ by
$$
I^{\circ}_{\Lambda \times \Lambda}(\mu):=- \int_{Z_{\Lambda}\times Z_{\Lambda}}\sqrt{\phi_{\Lambda \times \Lambda}}(L^{\circ}_{\Lambda}\otimes 1 + 1\otimes L^{\circ}_{\Lambda})\sqrt{\phi_{\Lambda \times \Lambda}}d(\nu^{\Lambda}\otimes \nu^{\Lambda}).
$$

Let us introduce the notation which describes exchanges of states for $(\eta, \eta') \in Z \times Z$. 
For $(x, y) \in V \times V$, 
$(\eta, \eta')^{(x,y)} \in Z \times Z$ is the configuration obtained by exchanging values $\eta_{x}$ and $\eta'_{y}$, i.e.,
$(\eta, \eta')^{(x,y)}:=(\zeta_{z}, \zeta'_{z'})_{(z,z') \in V \times V}$ is defined by setting

$$
\zeta_{z}:=
\begin{cases}
\eta_{y}' & z=x \\
\eta_{z} & z\neq x,
\end{cases}
$$

$$
\zeta_{z}':=
\begin{cases}
\eta_{x} & z=y \\
\eta_{z}' & z\neq y.
\end{cases}
$$

Moreover, for $F \in L^{2}(Z \times Z, \nu\otimes \nu)$, we define
$\pi_{x,y}F\left((\eta, \eta')\right):=F\left((\eta, \eta')^{(x,y)}\right)-F\left((\eta, \eta')\right)$.
We define the operator acting on $L^{2}(Z \times Z, \nu\otimes \nu)$ by
$$
L^{\circ}_{x_{0},x'_{0}}:=\pi_{x_{0},x'_{0}}.
$$

The corresponding Dirichlet form of $\sqrt{\phi_{\Lambda \times \Lambda}}$ is defined by
$$
I^{(x_{0},x'_{0})}_{\Lambda \times \Lambda}(\mu):=- \int_{Z_{\Lambda}\times Z_{\Lambda}}\sqrt{\phi_{\Lambda \times \Lambda}}L^{\circ}_{x_{0},x'_{0}}\sqrt{\phi_{\Lambda \times \Lambda}}d(\nu^{\Lambda}\otimes \nu^{\Lambda}).
$$

We prove two lemmas needed later.
The first one is easy to show, so we omit the proof.

\begin{lem}\label{quasi-iso}
There exist constants $c_{1}, c_{1}', c_{2} >0$ such that for all $\sigma \in \Gamma$,
$$
c'_{1}|\sigma|-c_{2} \le d(x_{0},\sigma x_{0}) \le c_{1}|\sigma|,
$$
where $d$ is the graph distance of $X$.
\end{lem}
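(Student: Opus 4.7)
The plan is to recognize Lemma \ref{quasi-iso} as an instance of the Milnor--\v{S}varc lemma applied to the free, cocompact $\Gamma$-action on $X$: I will show that the orbit map $\Gamma \to V$, $\sigma \mapsto \sigma x_0$, is a quasi-isometric embedding of $(\Gamma, |\cdot|)$ into $(V,d)$. For the upper bound, I fix the finite generating set $S$ of $\Gamma$ underlying the word metric and set $c_1 := \max_{s \in S \cup S^{-1}} d(x_0, s x_0)$, which is finite. Given $\sigma \in \Gamma$ with $|\sigma| = n$, I write $\sigma = s_1 \cdots s_n$ with each $s_i \in S \cup S^{-1}$; since $\Gamma$ acts on $X$ by graph automorphisms, $d$ is $\Gamma$-invariant, so the triangle inequality yields
$$ d(x_0, \sigma x_0) \le \sum_{i=1}^n d(s_1\cdots s_{i-1} x_0,\, s_1 \cdots s_i x_0) = \sum_{i=1}^n d(x_0, s_i x_0) \le c_1 |\sigma|. $$

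For the lower bound, the guiding idea is that a single step of a geodesic in $X$ can advance the $\Gamma$-coordinate by only a bounded amount. I choose a geodesic $x_0 = z_0, z_1, \ldots, z_n = \sigma x_0$ realizing $n = d(x_0, \sigma x_0)$ and decompose each $z_i = \tau_i y_i$ uniquely with $\tau_i \in \Gamma$ and $y_i$ a vertex of the (finite) fundamental subgraph $D_{x_0}$; thus $\tau_0 = \mathrm{id}$ and $\tau_n = \sigma$. For each edge $e \in E_0$ I fix a lift $\tilde e \in E$ and write $o\tilde e = \alpha_e y'_e$, $t\tilde e = \beta_e y''_e$ with $\alpha_e, \beta_e \in \Gamma$ and $y'_e, y''_e \in V(D_{x_0})$. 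The edge joining $z_i$ to $z_{i+1}$ projects to a unique $e \in E_0$, and there exists $\rho \in \Gamma$ with $(z_i, z_{i+1}) = \rho \cdot (o\tilde e, t\tilde e)$; comparing $\Gamma$-coordinates forces $\tau_i = \rho \alpha_e$ and $\tau_{i+1} = \rho \beta_e$, so that $\tau_i^{-1}\tau_{i+1} = \alpha_e^{-1}\beta_e$ lies in the finite set $T := \{\alpha_e^{-1}\beta_e : e \in E_0\}$. Setting $c := \max_{t \in T} |t|$, the telescoping estimate
$$ |\sigma| = |\tau_0^{-1}\tau_n| \le \sum_{i=0}^{n-1} |\tau_i^{-1}\tau_{i+1}| \le c\, n = c\, d(x_0, \sigma x_0) $$
gives the lower bound, with explicit constants $c_1' = 1/c$ and $c_2 = 0$.

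No serious obstacle arises: both bounds are elementary consequences of the single geometric fact $|V_0| < \infty$ (cocompactness of the $\Gamma$-action). The only delicate point is the bookkeeping in the lower bound, where one must keep track of which generator-like element of $\Gamma$ is produced by each edge of the geodesic via the decomposition $V = \Gamma \cdot V(D_{x_0})$; once this is set up cleanly, the estimate is automatic.
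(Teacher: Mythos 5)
Your proof is correct; the paper itself omits the proof of this lemma (``The first one is easy to show, so we omit the proof''), and your argument is exactly the standard quasi-isometry/\v{S}varc--Milnor reasoning the author evidently had in mind: the triangle inequality with $\Gamma$-invariance of $d$ for the upper bound, and the decomposition $V=\bigsqcup_{\sigma\in\Gamma}\sigma D_{x_{0}}$ to show each geodesic edge changes the $\Gamma$-coordinate by an element of a fixed finite set for the lower bound. The only cosmetic point is that you end with $c_{2}=0$ while the statement asks for $c_{2}>0$; since the inequality only improves when $c_{2}$ increases, one may simply take $c_{2}=1$ (the slack $c_{2}$ is there because the same estimate is later applied with $d(x_{0},\sigma x_{0})$ replaced by quantities that are only comparable to $|\sigma|$ up to an additive constant).
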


For $x, y \in V_{N}$ and for $\eta \in Z_{N}$, $\eta^{(x,y)}$ is the configuration obtained by exchanging two values $\eta_{x}$ and $\eta_{y}$, i.e., 
$$
\eta^{(x,y)}_{z}:=
\begin{cases}
\eta_{y} & z=x \\
\eta_{x} & z=y \\
\eta_{z} & \text{otherwise},
\end{cases}
$$
and moreover for $x, y \in V_{N}$, we define the operator $\pi_{x,y}F(\eta):=F\left(\eta^{(x,y)}\right)-F(\eta)$ for $F \in L^{2}(Z_{N}, \nu^{N})$.
These notations also indicates ones for $Z$.

The second one is the following:

\begin{lem}\label{path}

For every $\Gamma_{N}$-periodic functions $F \in L^{2}(Z_{N}, \nu^{N})$ and every $\underline{\sigma} \in \Gamma_{N}$,
$$
\int_{Z_{N}}(\pi_{x_{0},\underline{\sigma} x_{0}}F)^{2}d\nu^{N} \le 4 d(x_{0},\underline{\sigma} x_{0})^{2}\sum_{e \in E^{0}}\int_{Z_{N}}(\pi_{e}F)^{2}d\nu^{N}.
$$

\end{lem}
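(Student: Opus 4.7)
The plan is to express the long-range transposition $\pi_{x_0, \underline{\sigma} x_0}$ as a telescoping sum of nearest-neighbor edge exchanges along a geodesic joining $x_0$ and $\underline{\sigma} x_0$, apply Cauchy--Schwarz to pick up the factor $d(x_0,\underline{\sigma} x_0)$, and then use the $\Gamma_N$-invariance of both $F$ and $\nu^N$ to convert the resulting sum over path edges into a sum over the fundamental domain $E^0$.

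Concretely, I would fix a shortest path $y_0 = x_0, y_1, \dots, y_n = \underline{\sigma} x_0$ with $n = d(x_0,\underline{\sigma} x_0)$, and let $e_k$ be the edge from $y_{k-1}$ to $y_k$. The configuration $\eta^{(x_0, \underline{\sigma} x_0)}$ can be reached from $\eta$ by a prescribed sequence of $2n-1$ nearest-neighbor transpositions: first transport the value originally at $x_0$ out to $y_n$ by swapping along $e_1, e_2, \dots, e_n$ in order, and then pull the value originally at $y_n$ back to $x_0$ by swapping along $e_{n-1}, e_{n-2}, \dots, e_1$. A direct induction confirms that this restores the intermediate sites $y_1, \dots, y_{n-1}$ to their original values. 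Writing $F(\eta^{(x_0, \underline{\sigma} x_0)}) - F(\eta) = \sum_{k=1}^{2n-1} \pi_{f_k} F(\eta^{(k-1)})$ with $f_k \in \{e_1,\dots,e_n\}$, each $e_i$ appearing at most twice, and invoking $\bigl(\sum_{k=1}^{2n-1} a_k\bigr)^2 \le (2n-1) \sum_{k=1}^{2n-1} a_k^2$ gives a pointwise bound.

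Integrating against $\nu^N$, I would use the fact that every intermediate configuration $\eta^{(k-1)}$ is obtained from $\eta$ by measure-preserving edge transpositions, so the change of variables $\eta \mapsto \eta^{(k-1)}$ turns $\int (\pi_{f_k} F(\eta^{(k-1)}))^2 \, d\nu^N(\eta)$ into $\int (\pi_{f_k} F)^2 \, d\nu^N$. This produces the bound $\int (\pi_{x_0,\underline{\sigma} x_0} F)^2 \, d\nu^N \le 2(2n-1) \sum_{i=1}^{n} \int (\pi_{e_i} F)^2 \, d\nu^N$. To convert the edges $e_i \in E_N$ into the fundamental domain $E^0$, I would use that for any $\tau \in \Gamma_N$ one has $\pi_{\tau e} F(\eta) = (\pi_e F)(\tau^{-1} \eta)$ by unwinding the definitions of the $\Gamma_N$-action and of $\eta^e$; combined with the $\Gamma_N$-invariance of $\nu^N$ this gives $\int (\pi_{\tau e} F)^2 \, d\nu^N = \int (\pi_e F)^2 \, d\nu^N$ whenever $F$ is $\Gamma_N$-periodic. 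Writing each $e_i = \tau_i e_i^*$ with $e_i^* \in E^0$ and estimating $\int (\pi_{e_i^*} F)^2 \, d\nu^N \le \sum_{e \in E^0} \int (\pi_e F)^2 \, d\nu^N$ for each $i$, the collected constants produce $2(2n-1)\,n \le 4 n^2$ and yield the claim.

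The main obstacle, though still routine, is verifying that the explicit sequence of $2n-1$ nearest-neighbor swaps truly realizes the single long-range transposition $(x_0,\underline{\sigma} x_0)$ without permanently disturbing the intermediate sites; this is a direct bookkeeping induction on the path but must be done carefully. Everything else is measure-preserving changes of variables and a Cauchy--Schwarz counting argument.
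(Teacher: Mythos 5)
Your proposal is correct and follows essentially the same route as the paper: decompose the exchange $\pi_{x_0,\underline{\sigma}x_0}$ along the out-and-back sequence of $2n-1$ nearest-neighbor swaps on a geodesic, apply Cauchy--Schwarz to get the factor $2n-1$, use the measure-preserving change of variables for the intermediate configurations, and invoke $\Gamma_N$-periodicity to reduce each path edge to the fundamental domain $E^0$, collecting a constant bounded by $4n^2$. The only cosmetic difference is that you track the multiplicity of each edge (at most twice) before summing over $E^0$, whereas the paper bounds each of the $2n-1$ terms by the full sum over $E^0$ directly; both give the stated constant.
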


\begin{proof}

For $x_{0},\underline{\sigma} x_{0} \in V_{N}$, there exists a path $c=(e_{1},\dots, e_{l})$ such that $x_{0}=o(e_{1}), t(e_{l})=\underline{\sigma} x_{0}$ and $d(x_{0}, \underline{\sigma} x_{0})=l$.
Define a sequence of edges $\widetilde c=(f_{1}, \dots, f_{l}, f_{l+1}, \dots, f_{2l+1})$ by setting
$\widetilde c=(e_{1}, \dots, e_{l}, \overline e_{l-1}, \dots, \overline e_{1})$.
For $\eta \in Z_{N}$, let us define $\eta_{(0)}:=\eta, \eta_{(i)}:=\eta_{(i-1)}^{f_{i}}, 1 \le i \le 2l-1$, inductively. 
We note that $\eta^{x_{0}, \sigma x_{0}}=\eta_{(2l-1)}$. 
Then we have that
$$
\left(\pi_{x_{0},\sigma x_{0}}F(\eta)\right)^{2}	=\left(F\left(\eta^{x_{0}, \sigma x_{0}}\right)-F\left(\eta \right)\right)^{2}  
									=\left(\sum_{i=1}^{2l-1}\pi_{f_{i}}F\left(\eta_{(i-1)}\right)\right)^{2} 
									\le (2l-1)\sum_{i=1}^{2l-1}\left(\pi_{f_{i}}\left(\eta_{(i-1)}\right)\right)^{2}.
$$
If $F$ is $\Gamma_{N}$-periodic, then for each $i=1, \dots, 2l-1$, 
$$
\int_{Z_{N}}\left(\pi_{f_{i}}F\left(\eta_{(i-1)}\right)\right)^{2}d\nu^{N} 
\le 
\sum_{e \in E^{0}}\int_{Z_{N}}\left(\pi_{e}F\right)^{2}d\nu^{N}.
$$
Therefore
$$
\int_{Z_{N}}\left(\pi_{x_{0}, \sigma x_{0}}F \right)^{2}d\nu^{N}	\le (2l-1)^{2} \sum_{e \in E^{0}}\int_{Z_{N}}\left(\pi_{e}F\right)^{2}d\nu^{N}		
												\le 4d(x_{0}, \underline{\sigma} x_{0})^{2}\sum_{e \in E^{0}}\int_{Z_{N}}\left(\pi_{e}F\right)^{2}d\nu^{N}.
$$
It completes the proof.
\end{proof}

Let us define the subset of $\mathcal{P}(Z \times Z)$ by for a constant $\widetilde C >0$,
$$
\mathcal{A}_{\epsilon,\widetilde C}:=\left\{\mu \in P(Z \times Z) \ | \ I^{\circ}_{\Lambda \times \Lambda}(\mu)=0, I^{(x_{0},x'_{0})}_{\Lambda \times \Lambda}(\mu) \le \widetilde C \epsilon^{2}\right\}.
$$

Then we have the following lemma.

\begin{lem}\label{inclusion}
There exists a constant $\widetilde C >0$ such that
$$
\mathcal{A}_{\epsilon}^{\circ}\subset \mathcal{A}_{\epsilon, \widetilde C}.
$$
\end{lem}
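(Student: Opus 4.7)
The plan is to fix any $\mu \in \mathcal{A}^{\circ}_{\epsilon}$, realize it as a double limit ($L \to \infty$, then $N \to \infty$) of pushed-forward measures $\hat\sigma_{N} \mu_{N}$ with $\mu_{N} \in \mathcal{P}_{N,C}$ and $L < |\sigma_{N}| \le \epsilon N$, establish the two Dirichlet-form bounds for $\hat\sigma_{N}\mu_{N}$ with constants uniform in $N$, and then pass to the limit using lower semicontinuity of Dirichlet forms under the weak topology. Throughout I will use that $\sqrt{\phi_{N}}$ is $\Gamma_{N}$-periodic (since $\mu_{N}$ is $\Gamma_{N}$-invariant) and that $I_{N}(\mu_{N}) \le C |V_{N}|/N^{2}$, which together with the identity $I_{N}(\mu_{N}) = (|\Gamma_{N}|/4) \sum_{e \in E^{0}}\int (\pi_{e}\sqrt{\phi_{N}})^{2}\, d\nu^{N}$ yields the key estimate
\[
\sum_{e \in E^{0}} \int_{Z_{N}} (\pi_{e}\sqrt{\phi_{N}})^{2}\, d\nu^{N} \le \frac{4 C |V_{0}|}{N^{2}}.
\]

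For the first bound $I^{\circ}_{\Lambda \times \Lambda}(\mu)=0$, I would first choose $L$ large enough (depending on $K$) so that $\Lambda = B(D_{x_{0}}, K)$ and $\sigma_{N}\Lambda$ are disjoint in $X_{N}$ whenever $|\sigma_{N}| > L$; this is possible by Lemma \ref{quasi-iso}. On the push-forward $\hat\sigma_{N}\mu_{N}$, an exchange along an edge $e$ in the first copy of $\Lambda$ pulls back to an exchange along $e$ in $X_{N}$, and an exchange in the second copy pulls back to an exchange along $\sigma_{N} e$. By convexity of the Dirichlet form under marginalization,
\[
I^{\circ}_{\Lambda \times \Lambda}(\hat\sigma_{N} \mu_{N}) \le \frac{1}{4} \sum_{e \in E_{\Lambda} \cup \sigma_{N} E_{\Lambda}} \int_{Z_{N}} (\pi_{e}\sqrt{\phi_{N}})^{2}\, d\nu^{N} \le 2 |E_{\Lambda}| \cdot \frac{C|V_{0}|}{N^{2}},
\]
which tends to $0$ as $N \to \infty$ for $K$, $L$ fixed. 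Lower semicontinuity then gives $I^{\circ}_{\Lambda \times \Lambda}(\mu) = 0$.

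For the second bound, the exchange generator $L^{\circ}_{x_{0}, x'_{0}}$ on the two-copy system pulls back under $\hat\sigma_{N}$ to the exchange operator $\pi_{x_{0}, \sigma_{N} x_{0}}$ on $Z_{N}$, so
\[
I^{(x_{0}, x'_{0})}_{\Lambda \times \Lambda}(\hat\sigma_{N} \mu_{N}) \le \frac{1}{4}\int_{Z_{N}} (\pi_{x_{0}, \sigma_{N} x_{0}} \sqrt{\phi_{N}})^{2}\, d\nu^{N}.
\]
Applying Lemma \ref{path} (which uses the $\Gamma_{N}$-periodicity of $\sqrt{\phi_{N}}$) together with Lemma \ref{quasi-iso} and $|\sigma_{N}| \le \epsilon N$,
\[
\int_{Z_{N}} (\pi_{x_{0}, \sigma_{N} x_{0}} \sqrt{\phi_{N}})^{2}\, d\nu^{N} \le 4 c_{1}^{2} |\sigma_{N}|^{2} \sum_{e \in E^{0}}\int_{Z_{N}}(\pi_{e}\sqrt{\phi_{N}})^{2}\, d\nu^{N} \le 16 c_{1}^{2} C |V_{0}| \, \epsilon^{2}.
\]
Setting $\widetilde C := 4 c_{1}^{2} C |V_{0}|$ and invoking lower semicontinuity once more gives $I^{(x_{0}, x'_{0})}_{\Lambda \times \Lambda}(\mu) \le \widetilde C \epsilon^{2}$, which is the desired bound.

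The main obstacle, in my view, is the bookkeeping around the push-forward: verifying that exchanges on the two-copy space precisely correspond to edge exchanges (respectively, a long-range exchange) on $X_{N}$ so that convexity of the Dirichlet form and Lemma \ref{path} can be applied, and justifying lower semicontinuity of $I^{\circ}_{\Lambda\times\Lambda}$ and $I^{(x_{0},x'_{0})}_{\Lambda\times\Lambda}$ along the iterated limits $N \to \infty$ and then $L \to \infty$. Apart from that, everything reduces to the Dirichlet-form bound $I_{N}(\mu_{N}) \le C|V_{N}|/N^{2}$ built into the definition of $\mathcal{P}_{N, C}$.
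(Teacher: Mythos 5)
Your proposal is correct and follows essentially the same route as the paper: push forward by $\hat\sigma$, bound the two restricted Dirichlet forms via convexity and $\Gamma_{N}$-invariance together with Lemma~\ref{path} and Lemma~\ref{quasi-iso}, and pass to the limit using the (lower semi)continuity of the restricted functionals. The only discrepancy is the prefactor in $I^{(x_{0},x'_{0})}_{\Lambda\times\Lambda}(\hat\sigma\mu)\le\tfrac{1}{2}\int_{Z_{N}}\left(\pi_{x_{0},\sigma x_{0}}\sqrt{\phi}\right)^{2}d\nu^{N}$ (you wrote $1/4$; the identity $-\int G\,\pi G\,d\nu=\tfrac12\int(\pi G)^{2}d\nu$ gives $1/2$), which is harmless and only changes the value of $\widetilde C$.
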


\begin{proof}
As in Section\ref{section of the one-block estimate}, we define a subgraph $\Lambda:=(V_{\Lambda}, E_{\Lambda})$ of $X$ by setting $V_{\Lambda}:=B(D_{x_{0}}, K), E_{\Lambda}:=\{e \in E \ | \ o(e), t(e) \in V_{\Lambda}\}$ for $K$.
We take large enough $L, N$ for the diameter of $\Lambda$, $K$, so that
$V_{\Lambda} \cap \sigma V_{\Lambda} = \emptyset$ for $L < |\sigma|$ and $V_{\Lambda} \cup \sigma V_{\Lambda} \subset X_{N}$ for $L < |\sigma| \le \epsilon N$ by taking a suitable fundamental domain in $V$ by $N\Gamma$-action.
Take $\mu \in \mathcal{P}_{N, C}, \sigma \in \Gamma$. 
For $(\eta, \eta') \in Z_{\Lambda} \times Z_{\Lambda}$, if we define $\widetilde \eta \in Z_{\Lambda \cup \sigma \Lambda}$ by $\widetilde \eta|_{\Lambda}=\eta$ and $\sigma^{-1}\widetilde \eta|_{\Lambda}=\eta'$
then
$(\hat \sigma \mu)|_{\Lambda \times \Lambda}(\eta, \eta')=\mu|_{\Lambda \cup \sigma \Lambda}(\widetilde \eta)$.
Let us consider the operator acting on $L^{2}(Z, \nu)$ by
$$
L^{\circ}_{\Lambda \cup \sigma \Lambda}:=\frac{1}{2}\sum_{e \in E_{\Lambda \cup \sigma \Lambda}}\pi_{e}.
$$
For $\mu \in \mathcal{P}_{N, C}$, we denote the density of $\mu|_{\Lambda \cup \sigma \Lambda}$ 
by
$\phi_{\Lambda \cup \sigma \Lambda}:=d\mu|_{\Lambda \cup \sigma \Lambda}/d\nu^{\Lambda \cup \sigma \Lambda}$.
Then
$$
I^{\circ}_{\Lambda \times \Lambda}(\hat \sigma \mu)=I^{\circ}_{\Lambda \cup \sigma \Lambda}(\mu)
=-\int_{Z_{\Lambda \cup \sigma \Lambda}}\sqrt{\phi_{\Lambda \cup \sigma \Lambda}}L^{\circ}_{\Lambda \cup \sigma \Lambda}\sqrt{\phi_{\Lambda \cup \sigma \Lambda}}d\nu^{\Lambda \cup \sigma \Lambda}.
$$
For any $\mu \in \mathcal{P}_{N, C}$, we put $\phi:=d\mu/d\nu^{N}$.
By the convexity of the Dirichlet form and the $\Gamma_{N}$-invariance of $\phi$,
\begin{align*}
I^{\circ}_{\Lambda \times \Lambda}(\hat \sigma \mu)
\le \frac{1}{4}\sum_{e \in E_{\Lambda \cup \sigma \Lambda}}\int_{Z_{N}}\left(\pi_{e}\sqrt{\phi}\right)^{2}d\nu^{N}	
&\le \frac{|E_{\Lambda \cup \sigma \Lambda}|}{4}\sum_{e \in E^{0}}\int_{Z_{N}}\left(\pi_{e}\sqrt{\phi}\right)^{2}d\nu^{N}	\\
&\le  \frac{|E_{\Lambda}|}{2}\frac{1}{|\Gamma_{N}|}I_{N}(\mu).
\end{align*}
Since $I_{N}(\mu) \le C|V_{N}|/N^{2}$,
$$
I^{\circ}_{\Lambda \times \Lambda}(\hat \sigma \mu)	
\le  \frac{|E_{\Lambda}|}{2}\frac{|V_{N}|}{|\Gamma_{N}|}\frac{C}{N^{2}}= \frac{|E_{\Lambda}|}{2}|V_{0}|\frac{C}{N^{2}} \to 0 \ \ \ \ \ \text{as $N \to \infty$}.
$$
Therefore $I^{\circ}_{\Lambda \times \Lambda}(\mu_{L})=0$
for any $\mu_{L} \in \mathcal{A}^{\circ}_{\epsilon, L}$.
Furthermore $I^{\circ}_{\Lambda \times \Lambda}(\widetilde \mu)=0$ for any $\widetilde \mu \in \mathcal{A}^{\circ}_{\epsilon}$ by the continuity of the functional $I^{\circ}_{\Lambda \times \Lambda}$.

For $\mu \in \mathcal{P}_{N, C}$, by the convexity of the Dirichlet form,
$$
I^{(x_{0},x'_{0})}_{\Lambda \times \Lambda}(\hat \sigma \mu)
:=- \int_{Z_{\Lambda \cup \sigma \Lambda}}\sqrt{\phi_{\Lambda \cup \sigma \Lambda}}\pi_{x_{0}, \sigma x_{0}}\sqrt{\phi_{\Lambda \cup \sigma \Lambda}}d\nu^{\Lambda \cup \sigma \Lambda}
\le \frac{1}{2}\int_{Z_{N}}\left(\pi_{x_{0}, \sigma x_{0}}\sqrt{\phi_{\Lambda \cup \sigma \Lambda}}\right)^{2}d\nu^{N}.
$$

By the $\Gamma_{N}$-invariance of $\mu \in \mathcal{P}_{N, C}$ and by Lemma\ref{path},
$$
\int_{Z_{N}}\left(\pi_{x_{0},\underline{\sigma} x_{0}}\sqrt{\phi}\right)^{2}d\nu^{N}
\le 4 d(x_{0},\underline{\sigma} x_{0})^{2}\sum_{e \in E^{0}}\int_{Z_{N}}\left(\pi_{e}\sqrt{\phi}\right)^{2}d\nu^{N}  
\le 4 d(x_{0},\underline{\sigma} x_{0})^{2}\frac{4}{|\Gamma_{N}|}I_{N}(\mu).
$$

By Lemma\ref{quasi-iso} and the definition of $\mu \in \mathcal{P}_{N, C}$,
for all $\sigma \in \Gamma$ such that $L < |\sigma| \le \epsilon N$,
$$
I^{(x_{0}, x'_{0})}_{\Lambda \times \Lambda}(\hat \sigma \mu) \le 4 c_{1}^{2}|\sigma|^{2}\cdot \frac{4}{|\Gamma_{N}|}\cdot C\frac{|V_{N}|}{N^{2}} \le 16\epsilon^{2}c_{1}^{2}C|V_{0}|.
$$
By setting $\widetilde C=16c_{1}^{2}C|V_{0}|$ and the continuity of the $I^{\circ}_{\Lambda \times \Lambda}$, for every $\widetilde \mu \in \mathcal{A}^{\circ}_{\epsilon}$, we have that
$I^{(x_{0}, x'_{0})}_{\Lambda \times \Lambda}(\widetilde \mu) \le \epsilon^{2}\widetilde C$.
It concludes that $\mathcal{A}^{\circ}_{\epsilon} \subset \mathcal{A}_{\epsilon, \widetilde C}$.
\end{proof}

Let us prove Theorem\ref{the two-blocks estimate}.

\begin{proof}[Proof of Theorem\ref{the two-blocks estimate}]
Denote by $\mathcal{A}_{0}$
the set of all limit points of $\mathcal{A}_{\epsilon, \widetilde C}$
as $\epsilon \to 0$.
For every $\widetilde \mu_{0} \in \mathcal{A}_{0}$, it holds that $I^{\circ}_{\Lambda \times \Lambda}(\widetilde \mu_{0})=0$ 
and $I^{(x_{0}, x'_{0})}_{\Lambda \times \Lambda}(\widetilde \mu_{0})=0$
by the continuity of the functionals $I^{\circ}_{\Lambda \times \Lambda}$ and $I^{(x_{0}, x'_{0})}_{\Lambda \times \Lambda}$.
These show that for any $x, y \in V_{\Lambda} \times V_{\Lambda}$, $\pi_{x, y}\left(\widetilde \mu_{0}|_{\Lambda \times \Lambda}\right)=0$
and thus
for any $x, y \in V_{\Lambda}$ and for any $\eta, \eta' \in Z$,
$\widetilde \mu_{0}|_{\Lambda \times \Lambda}\left((\eta^{(x, y)}, \eta')\right)=\widetilde \mu_{0}|_{\Lambda \times \Lambda}\left((\eta, \eta')\right)$,
$\widetilde \mu_{0}|_{\Lambda \times \Lambda}\left((\eta, \eta'^{(x, y)})\right)=\widetilde \mu_{0}|_{\Lambda \times \Lambda}\left((\eta, \eta')\right)$
and
$\widetilde \mu_{0}|_{\Lambda \times \Lambda}\left((\eta, \eta')^{(x, y)}\right)=\widetilde \mu_{0}|_{\Lambda \times \Lambda}\left((\eta, \eta')\right)$,
i.e.,
$\widetilde \mu_{0}$ is exchangeable on $Z \times Z$.
By the de Finetti theorem there exists a probability measure $\lambda$ on $[0, 1]$ such that $\widetilde \mu_{0}=\int_{[0, 1]}\nu_{\rho}\otimes \nu_{\rho}\lambda(d\rho)$.

As in the proof of Theorem\ref{the one-block estimate}, $\lim_{K \to \infty}\sup_{\rho \in [0, 1]}\mathbb{E}_{\nu_{\rho}}\left|\overline \eta_{x_{0}, K} - \rho \right|^{2}=0$,
therefore, by the triangular inequality,
\begin{align*}
\sup_{\widetilde \mu_{0} \in \mathcal{A}_{0}}\mathbb{E}_{\widetilde \mu_{0}}\left|\overline \eta_{x_{0}, K}-\overline \eta_{x'_{0}, K}\right|
&\le
\sup_{\rho \in [0, 1]}\mathbb{E}_{\nu_{\rho}\otimes \nu_{\rho}}\left|\overline \eta_{x_{0}, K}-\overline \eta_{x'_{0}, K}\right|  \\
&\le
2 \sup_{\rho \in [0, 1]}\mathbb{E}_{\nu_{\rho}}\left|\overline \eta_{x_{0}, K}- \rho \right| \to 0 \ \ \ \ \ \text{as $K \to \infty$}.
\end{align*}
Finally, by Lemma\ref{inclusion}, $\mathcal{A}^{\circ}_{\epsilon} \subset \mathcal{A}_{\epsilon, \widetilde C}$ for some $\widetilde C >0$ and thus,
\begin{align*}
&\limsup_{\epsilon \to 0}\limsup_{L \to \infty}\limsup_{N \to \infty}\sup_{\sigma \in \Gamma s.t. L <|\sigma| \le \epsilon N}
\sup_{\mu \in \mathcal{P}_{N, C}}\mathbb{E}_{\mu}\left| \overline \eta_{x_{0}, K}-\overline \eta_{\sigma x'_{0}, K}\right|	\\
&\le
\limsup_{\epsilon \to 0}\sup_{\widetilde \mu \in \mathcal{A}^{\circ}_{\epsilon}}\mathbb{E}_{\widetilde \mu}\left|\overline \eta_{x_{0}, K}-\overline \eta'_{x'_{0}, K}\right|	
\le
\sup_{\widetilde \mu_{0} \in \mathcal{A}_{0}}\mathbb{E}_{\widetilde \mu_{0}}\left|\overline \eta_{x_{0}, K}-\overline \eta'_{x'_{0}, K}\right|	\to 0 \ \ \ \ \ \ \text{as $K \to \infty$}.
\end{align*}
This completes the theorem
$$
\lim_{K \to \infty}\limsup_{\epsilon \to 0}\limsup_{L \to \infty}\limsup_{N \to \infty}\sup_{\sigma \in \Gamma s.t. L <|\sigma| \le \epsilon N}
\sup_{\mu \in \mathcal{P}_{N, C}}\mathbb{E}_{\mu}\left| \overline \eta_{x_{0}, K}-\overline \eta_{\sigma x'_{0}, K}\right|=0.
$$
\end{proof}

\subsection{The proof of Theorem\ref{local ergodic theorem}}

Let us prove Theorem\ref{local ergodic theorem} by using the one-block estimate Theorem\ref{the one-block estimate} and the two-blocks estimate Theorem\ref{the two-blocks estimate}.

\begin{proof}[Proof of Theorem\ref{local ergodic theorem}]
First, we note that there exist positive constants $C_{0}, C_{1}$ and $C_{2}$
such that for any $\eta \in Z$,
\begin{align*}
&\left|\overline \eta_{x_{0}, \epsilon N} - \frac{1}{\left|\cup_{L <|\sigma| \le \epsilon N}\sigma D_{x_{0}}\right|}\sum_{z \in \cup_{L <|\sigma| \le \epsilon N}\sigma D_{x_{0}}}\overline \eta_{z, K}\right|   \\
&\le
C_{0}\frac{L^{d}}{(\epsilon N)^{d}} + C_{1}\frac{K^{d}\left((\epsilon N)^{d} - (\epsilon N -1)^{d}\right)}{(\epsilon N)^{d}} + C_{2}\frac{K^{d}\left((L +1)^{d}-L^{d}\right)}{(\epsilon N)^{d}},
\end{align*}
and thus there exists a constant $C(\epsilon, L, K)>0$ depending on $\epsilon, L$ and $K$ such that for any $\eta \in Z$,
$$
\left|\overline \eta_{x_{0}, \epsilon N} - \frac{1}{\left|\cup_{L <|\sigma| \le \epsilon N}\sigma D_{x_{0}}\right|}\sum_{z \in \cup_{L <|\sigma| \le \epsilon N}\sigma D_{x_{0}}}\overline \eta_{z, K}\right|   \\
\le
\frac{C(\epsilon, L, K)}{N},
$$
uniformly.
Then, since $\mu$ is $\Gamma$-invariant as a probability measure on $Z$,
\begin{align*}
\mathbb{E}_{\mu}\left|\overline \eta_{x_{0}, K} - \overline \eta_{x_{0}, \epsilon N}\right|
&\le
\mathbb{E}_{\mu}\left|\overline \eta_{x_{0}, K}-\frac{1}{\left|\cup_{L <|\sigma| \le \epsilon N}\sigma D_{x_{0}}\right|}\sum_{z \in \cup_{L <|\sigma| \le \epsilon N}\sigma D_{x_{0}}}\overline \eta_{z, K}\right| + \frac{C(\epsilon, L, K)}{N}   \\
&\le
\frac{1}{\left|\cup_{L <|\sigma| \le \epsilon N}\sigma D_{x_{0}}\right|}\sum_{z \in \cup_{L <|\sigma| \le \epsilon N}\sigma D_{x_{0}}}\mathbb{E}_{\mu}\left|\overline \eta_{x_{0}, K} - \overline \eta_{z, K}\right| + \frac{C(\epsilon, L, K)}{N}   \\
&\le \sup_{\sigma \in \Gamma, L< |\sigma| \le \epsilon N}\sup_{\mu \in \mathcal{P}_{N, C}}\mathbb{E}_{\mu}\left|\overline \eta_{x_{0}, K} - \overline \eta_{\sigma x_{0}, K}\right| + \frac{C(\epsilon, L, K)}{N},
\end{align*}
where the last inequality comes from the fact that for any $z \in \sigma D_{x_{0}}$ it holds that $\overline \eta_{\sigma x_{0}, K}=\overline \eta_{z, K}$.

Applying Theorem\ref{the two-blocks estimate}, we have that
$$
\lim_{K \to \infty}\limsup_{\epsilon \to 0}\limsup_{N \to \infty}\sup_{\mu \in \mathcal{P}_{N, C}}\mathbb{E}_{\mu}\left|\overline \eta_{x_{0}, K} - \overline \eta_{x_{0}, \epsilon N}\right|=0.
$$
For every $\Gamma$-periodic local function bundles $f$, $\langle f_{x} \rangle(\cdot)$ is uniformly continuous on $[0, 1]$.
Therefore,
$$
\lim_{K \to \infty}\limsup_{\epsilon \to 0}\limsup_{N \to \infty}\sup_{\mu \in \mathcal{P}_{N, C}}\mathbb{E}_{\mu}\left|\langle f_{x} \rangle\left(\overline \eta_{x_{0}, K}\right)- \langle f_{x} \rangle\left(\overline \eta_{x_{0}, \epsilon N}\right) \right|=0.
$$
Furthermore, applying the one-block estimate Theorem\ref{the one-block estimate}, for every $x \in D_{x_{0}}$,
$$
\lim_{K \to \infty}\limsup_{\epsilon \to 0}\limsup_{N \to \infty}\sup_{\mu \in \mathcal{P}_{N, C}}\mathbb{E}_{\mu}\left| \widetilde{f}_{x, K}- \langle f_{x} \rangle\left(\overline \eta_{x_{0}, \epsilon N}\right) \right|=0.
$$
It completes the proof of Theorem\ref{local ergodic theorem}.
\end{proof}

\section{The proof of Theorem\ref{main}}\label{the proof of the main theorem}

In this section, we prove Theorem\ref{main}.

Let $\Phi_{N}:X_{N} \to \mathbb{T}^{d}$ be the $N$-scaling map.
We define the empirical density by
$$
\xi_{N}(t, d\mu):=\frac{1}{|V_{N}|}\sum_{x \in V_{N}}\eta^{N}_{x}(t)\delta_{\Phi_{N}(x)}(d\mu),
$$
where $\delta_{z}$ is the delta measure at $z \in \mathbb{T}^{d}$.
The empirical density is the measure valued process.
We denote by $C^{1,2}([0,T] \times \mathbb{T}^{d})$ the space of continuous functions with continuous derivatives in $[0, T]$ and twice continuous derivatives in $\mathbb{T}^{d}$.
For every $J_{\cdot}(\cdot) \in C^{1,2}([0,T] \times \mathbb{T}^{d})$, we define 
$$
\langle J_{t}, \xi_{N}(t)\rangle:=\frac{1}{|V_{N}|}\sum_{x \in V_{N}}\eta^{N}_{x}(t)J_{t}\left(\Phi_{N}(x)\right).
$$
To abuse the notation, we denote the inner product in $L^{2}(\mathbb{T}^{d}, \nu^{N})$ by
$$
\langle F, G \rangle=\int_{\mathbb{T}^{d}}F G d\nu^{N} \ \ \ \ \text{for $F, G \in L^{2}(\mathbb{T}^{d}, \nu^{N})$}.
$$

Let us define the process as follows:
$$
M_{N}(t):=\langle J_{t}, \xi_{N}(t)\rangle - \langle J_{0}, \xi_{N}(0)\rangle - \int_{0}^{t}b_{N}(s)ds,
$$
where $b_{N}(t):=\langle (\partial/\partial t)J_{t}, \xi_{N}(t)\rangle + N^{2}L_{N}^{H} \langle J_{t}, \xi_{N}(t)\rangle$ and
$$
N_{N}(t):=\left|M_{N}(t)\right|^{2} - \int_{0}^{t}A_{N}(s)ds,
$$
where
$$
A_{N}(s):=\frac{N^{2}}{2}\sum_{e \in E_{N}}c^{H}(e, \eta, s)\left(\pi_{e}\langle J_{s}, \xi_{N}(s)\rangle\right)^{2}.
$$
Here $M_{N}(t), N_{N}(t)$ are martingales with respect to the filtration $\{\mathcal{F}_{t}\}_{t\ge 0}$, where $\mathcal{F}_{t}:=\sigma\left\{\eta^{N}(s) \ | \ 0 \le s \le t \right\}$ 
and it holds that
$$
\mathbb{E}_{N}^{H}\left|M_{N}(t)\right|^{2}=\mathbb{E}_{N}^{H}\int_{0}^{t}A_{N}(s)ds.
$$
Then we have the following lemma by applying the Doob inequality.

\begin{lem}\label{Doob}
$$
\lim_{N \to \infty}\mathbb{E}_{N}^{H}\left[\sup_{0 \le t \le T}\left|M_{N}(t)\right|^{2}\right]=0.
$$
\end{lem}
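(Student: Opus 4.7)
The plan is to combine Doob's maximal inequality with the fact that the predictable quadratic variation $\int_0^t A_N(s)\,ds$ of $M_N$ is already known to compute $\mathbb{E}_N^H|M_N(t)|^2$. Concretely, by Doob's $L^2$ inequality applied to the martingale $M_N$,
\begin{equation*}
\mathbb{E}_N^H\Bigl[\sup_{0 \le t \le T}|M_N(t)|^2\Bigr] \;\le\; 4\,\mathbb{E}_N^H|M_N(T)|^2 \;=\; 4\,\mathbb{E}_N^H\!\int_0^T A_N(s)\,ds,
\end{equation*}
so it suffices to show $\mathbb{E}_N^H\!\int_0^T A_N(s)\,ds \to 0$ as $N\to\infty$.

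The core of the argument is a pointwise estimate on $A_N(s)$. A direct computation of $\pi_e\langle J_s,\xi_N(s)\rangle$ gives
\begin{equation*}
\pi_e\langle J_s,\xi_N(s)\rangle \;=\; \frac{1}{|V_N|}\bigl(\eta_{te}-\eta_{oe}\bigr)\bigl[J_s(\Phi_N(oe))-J_s(\Phi_N(te))\bigr].
\end{equation*}
Since $\Phi_N(te)-\Phi_N(oe) = \frac{1}{N}{\bf v}(e)$ (viewing $e$ as the projection of a lift and using the $\phi$-periodicity of $\Phi$), and since $J_s\in C^{1,2}([0,T]\times\mathbb{T}^d)$, a first-order Taylor expansion yields $|J_s(\Phi_N(te))-J_s(\Phi_N(oe))| \le C_J/N$ uniformly in $s\in[0,T]$, $e\in E_N$. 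Combined with $|\eta_{te}-\eta_{oe}|\le 1$, this gives
\begin{equation*}
\bigl(\pi_e\langle J_s,\xi_N(s)\rangle\bigr)^2 \;\le\; \frac{C_J^2}{N^2|V_N|^2}.
\end{equation*}

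Next, the jump rate $c^H(e,\eta,s) = \eta_{oe}(1-\eta_{te})\exp[H(s,\Phi_N(te))-H(s,\Phi_N(oe))]$ is bounded uniformly by a constant $C(H)$ depending only on $\|H\|_\infty$, because $H\in C^{1,2}([0,T]\times\mathbb{T}^d)$ on a compact space. Using $|E_N| = |E_0|\cdot|\Gamma_N| = O(|V_N|)$, we obtain
\begin{equation*}
A_N(s) \;\le\; \frac{N^2}{2}\cdot|E_N|\cdot C(H)\cdot\frac{C_J^2}{N^2|V_N|^2} \;=\; \frac{C(H,J)}{|V_N|},
\end{equation*}
uniformly in $s\in[0,T]$ and $\eta$. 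Integrating in time and taking expectations,
\begin{equation*}
\mathbb{E}_N^H\!\int_0^T A_N(s)\,ds \;\le\; \frac{T\,C(H,J)}{|V_N|} \;\longrightarrow\; 0,
\end{equation*}
which, combined with the Doob inequality displayed above, proves the lemma.

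There is no real obstacle here: the only mildly nontrivial input is the Taylor estimate together with the identity $\Phi_N(te)-\Phi_N(oe) = {\bf v}(e)/N$, which is immediate from the definition of the $N$-scaling map and the $\Gamma$-periodicity of the harmonic realization $\Phi$. Everything else is the standard martingale bookkeeping for empirical densities driven by an exclusion generator.
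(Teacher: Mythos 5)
Your proposal is correct and follows essentially the same route as the paper: compute $\pi_e\langle J_s,\xi_N(s)\rangle$ explicitly, use the $C^{1,2}$ regularity of $J$ to get the uniform $C(J)/N$ bound on the increment, bound $c^H$ by a constant, deduce $A_N(s)\le C/|V_N|$, and finish with Doob's $L^2$ maximal inequality. The only cosmetic difference is the order in which Doob's inequality is invoked, which changes nothing.
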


\begin{proof}
For $J \in C^{1, 2}([0, T] \times \mathbb{T}^{d})$ and for each $e \in E_{N}$, we have that
\begin{align*}
\pi_{e}\langle J_{s}, \xi_{N}(s)\rangle
&=\frac{1}{|V_{N}|}\sum_{x \in V_{N}}\pi_{e}\left(\eta^{N}_{x}(s)J_{s}\left(\Phi_{N}(x)\right)\right)   \\
&=\frac{1}{|V_{N}|}\left[\eta^{N}_{oe}(s) - \eta^{N}_{te}(s)\right]\left[J_{s}\left(\Phi_{N}(te)\right)-J_{s}\left(\Phi_{N}(oe)\right)\right].
\end{align*}

By the regularity of $J$ and the compactness of $\mathbb{T}^{d}$,
we note that there exists a constant $C(J)$ depending only on $J$, such that uniformly,
$$
\left|J_{s}\left(\Phi_{N}(te)\right)- J_{s}\left(\Phi_{N}(oe)\right)\right| \le \frac{C(J)}{N}.
$$
Thus
$$
A_{N}(s)	\le \frac{N^{2}}{2}\sum_{e \in E_{N}}c^{H}(e, \eta, s)\left[\frac{1}{|V_{N}|}\frac{C(J)}{N}\right]^{2}   
\le N^{2}|E_{N}|C'\frac{C(J)^{2}}{|V_{N}|^{2}N^{2}}
= \frac{C''}{|V_{N}|},
$$
where $C'$ is a constant such that $c^{H}(\cdot, \cdot, \cdot) \le C'$ and $C''=C'C(J)^{2}|E_{0}|/|V_{0}|$.
Then we obtain that
$$
\mathbb{E}_{N}^{H}\left|M_{N}(T)\right|^{2}=\mathbb{E}_{N}^{H}\int_{0}^{T}A_{N}(s)ds \le \frac{C''T}{|V_{N}|} \to 0 \ \ \ \ \ \text{as $N \to \infty$}.
$$
Applying the Doob inequality for the right continuous martingale $\{M_{N}(t)\}_{t}$, 
$$
\mathbb{E}_{N}^{H}\left[\sup_{0 \le t \le T}\left|M_{N}(T)\right|^{2}\right] \le 4 \mathbb{E}_{N}^{H}\left|M_{N}(T)\right|^{2},
$$
we conclude that $\lim_{N \to \infty}\mathbb{E}_{N}^{H}\left[\sup_{0 \le t \le T}\left|M_{N}(t)\right|^{2}\right]=0$.
\end{proof}

\subsection{Relative compactness of a sequence of probability measures}\label{relative compactness}

We denote by $\mathcal{M}:=\mathcal{M}(\mathbb{T}^{d})$ the space of nonnegative Borel measures with the total measure less than or equal to one on $\mathbb{T}^{d}$, endowed with the weak topology.
Since $\mathbb{T}^{d}$ is a compact metric space, the space of continuous functions $C(\mathbb{T}^{d})$ with the supremum norm is separable.
Fix a dense countable subset $\{J_{k}\}_{k=0}^{\infty}$ of $C(\mathbb{T}^{d})$, then the weak topology of $\mathcal{M}$ is given by the distance $d_{\mathcal{M}}(\cdot, \cdot)$ by
$$
d_{\mathcal{M}}(\mu, \mu'):=\sum_{k=0}^{\infty}\frac{1}{2^{k}}\cdot\frac{|\langle J_{k}, \mu \rangle-\langle J_{k}, \mu' \rangle|}{1+|\langle J_{k}, \mu \rangle-\langle J_{k}, \mu' \rangle|},
$$
for $\mu, \mu' \in \mathcal{M}$ where $\langle J_{k}, \mu \rangle:=\int_{\mathbb{T}^{d}}J_{k}d\mu$.
We note that $\mathcal{M}$ with the weak topology is compact.

Define the space of paths in $\mathcal{M}$ by
$$
D([0, T], \mathcal{M}):=\left\{\xi_{\cdot}: [0, T] \to \mathcal{M} \ | \ \text{$\xi$ is right continuous with left limits.}\right\},
$$
equipped with the Skorohod topology.
For a given process $\xi_{N}: [0, T] \to \mathcal{M}$
such that $\mathbb{P}_{N}^{H}\left(\xi_{N} \in D([0, T], \mathcal{M})\right)=1$,
we denote by $Q_{N}^{H}$ the distribution of $\xi_{N}$ on $D([0, T], \mathcal{M})$.

Then we show that the sequence $\{Q_{N}^{H}\}_{N}$ has a subsequential limit. 
The following proposition gives a sufficient condition for this.
See \cite{KL} Section 4, Theorem 1.3 for the proof.

\begin{pro}\label{relative compactness}
If for every $J_{k}$, $k=0, 1, \dots$, and every $\delta >0$,
$$
\lim_{\gamma \to 0}\limsup_{N \to \infty} Q_{N}^{H}\left(\sup_{|t-s| \le \gamma}\Big|\langle J_{k}, \xi_{N}(t)\rangle -\langle J_{k}, \xi_{N}(s)\rangle\Big| > \delta \right)=0,
$$
then there exists a subsequence $\{Q_{N_{k}}^{H}\}_{k=0}^{\infty}$ and a probability measure $Q^{H}$ on $D([0, T], \mathcal{M})$
such that $Q_{N_{k}}^{H}$ weakly converges to $Q^{H}$ as $k \to \infty$.
\end{pro}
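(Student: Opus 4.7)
The plan is to establish tightness of $\{Q_N^H\}$ on $D([0,T],\mathcal{M})$ and then invoke Prokhorov's theorem to extract a weakly convergent subsequence. The key observation that makes this straightforward is that $\mathcal{M}$ (the set of sub-probability measures on the compact space $\mathbb{T}^d$) is itself compact in the weak topology, so the ``compact containment'' half of the standard tightness criterion for $D([0,T],\mathcal{M})$ is automatic. Only the modulus-of-continuity half needs work, and the hypothesis is tailored to provide precisely the scalar estimates needed for this.

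The reduction from the $d_\mathcal{M}$-modulus to scalar moduli uses the explicit series defining $d_\mathcal{M}$. Given $\delta>0$, first choose $k_0$ so that $\sum_{k>k_0}2^{-k}<\delta/2$. Since each summand of $d_\mathcal{M}$ is bounded by $2^{-k}$, one has for all $\mu,\mu'\in\mathcal{M}$ the pointwise bound
$$
d_\mathcal{M}(\mu,\mu')\le \frac{\delta}{2}+\sum_{k=0}^{k_0}2^{-k}\bigl|\langle J_k,\mu\rangle-\langle J_k,\mu'\rangle\bigr|.
$$
Consequently, if $\sup_{|t-s|\le\gamma}d_\mathcal{M}(\xi_N(t),\xi_N(s))>\delta$, then $\sup_{|t-s|\le\gamma}|\langle J_k,\xi_N(t)\rangle-\langle J_k,\xi_N(s)\rangle|>\delta/(2(k_0+1))$ for at least one $k\le k_0$. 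A union bound over the finitely many indices $k=0,\dots,k_0$ then yields
$$
Q_N^H\!\left(\sup_{|t-s|\le\gamma}d_\mathcal{M}(\xi_N(t),\xi_N(s))>\delta\right)\le\sum_{k=0}^{k_0}Q_N^H\!\left(\sup_{|t-s|\le\gamma}\bigl|\langle J_k,\xi_N(t)\rangle-\langle J_k,\xi_N(s)\rangle\bigr|>\tfrac{\delta}{2(k_0+1)}\right).
$$
Taking $\limsup_{N\to\infty}$ and then $\lim_{\gamma\to0}$, each summand on the right vanishes by the hypothesis, so the left-hand side does too.

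Next I would combine the two pieces via the standard tightness criterion for càdlàg paths in a Polish space (see, e.g., Billingsley, \emph{Convergence of Probability Measures}, Theorem 13.2, or the statement in Kipnis--Landim, Chapter 4). The strong continuity modulus just controlled dominates the Skorohod càdlàg modulus $w'$, so the modulus half of the tightness criterion is verified; combined with the automatic compact containment from compactness of $\mathcal{M}$, this gives tightness of $\{Q_N^H\}_N$ in $D([0,T],\mathcal{M})$. Since $D([0,T],\mathcal{M})$ is Polish, Prokhorov's theorem produces a subsequence $\{Q_{N_k}^H\}$ converging weakly to some probability measure $Q^H$, as required.

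The only real obstacle is conceptual rather than computational: one has to be careful that controlling the \emph{strong} uniform modulus (as in the hypothesis) is sufficient for Skorohod tightness, i.e., that it dominates the appropriate càdlàg modulus $w'$. Once this is granted, everything reduces to a finite union bound plus the tail estimate for the metric $d_\mathcal{M}$, and no further probabilistic input is needed beyond the compactness of $\mathcal{M}$ and Prokhorov's theorem.
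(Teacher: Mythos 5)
Your argument is correct and is essentially the standard tightness proof that the paper itself does not reproduce but instead cites (Kipnis--Landim, Chapter 4, Theorem 1.3): truncate the series defining $d_{\mathcal{M}}$, apply a finite union bound to reduce the $d_{\mathcal{M}}$-modulus to the scalar moduli in the hypothesis, note that compact containment is automatic since $\mathcal{M}$ is compact, and conclude by Prokhorov, using that the uniform modulus dominates the c\`adl\`ag modulus $w'$.
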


The next proposition claims that each subsequential limit $Q^{H}$ in Proposition\ref{relative compactness} is absolutely continuous with respect to Lebesgue measure on the torus for each time $t$, and its density has the value in $[0, 1]$ a.e.
The proof is the same as in \cite{KL} Section 4, pp.57, so we omit the proof.
 
\begin{pro}\label{absolute continuity}
All limit points $Q^{H}$ of $\{Q_{N}^{H}\}_{N}$ are concentrated on trajectories of absolutely continuous measures with respect to the Lebesgue measure for each time $t$, i.e., there exists a Borel set $W \subset D([0, T], \mathcal{M})$ such that $Q^{H}(W)=1$ and for every $\xi_{\cdot} \in W$ and every $t \in [0, T]$, $\xi_{t}$ is absolutely continuous with respect to the Lebesgue measure $du$. 
Moreover, the density $\rho(t, u):=d\xi_{t}/du$ satisfies that $0 \le \rho(t, u) \le 1$, $du$-a.e.
\end{pro}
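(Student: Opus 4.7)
The plan is to exploit the occupancy bound $\eta^N_x(t) \in \{0,1\}$ to dominate the random measure $\xi_N(t)$ by a deterministic point measure whose weak limit is Lebesgue measure on $\mathbb{T}^d$. For any nonnegative continuous $J$ on $\mathbb{T}^d$ and every $t$,
$$
\langle J, \xi_N(t)\rangle = \frac{1}{|V_N|}\sum_{x \in V_N}\eta^N_x(t) J(\Phi_N(x)) \le \frac{1}{|V_N|}\sum_{x \in V_N}J(\Phi_N(x)),
$$
pointwise in the sample path. Since $\Gamma_N = \Gamma/N\Gamma$ acts freely on $X_N$ with $|V_0|$ vertices per fundamental domain and $\Phi_N$ is the $1/N$-scaled periodic embedding into $\mathbb{T}^d$, the image $\Phi_N(V_N)$ is asymptotically equidistributed on the torus, so that
$$
\lim_{N \to \infty}\frac{1}{|V_N|}\sum_{x \in V_N}J(\Phi_N(x)) = \int_{\mathbb{T}^d}J(u)\,du,
$$
under the normalization of $du$ implicit in Theorem \ref{main}.

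Next I would pass this pointwise bound to any subsequential limit $Q^H$. Fix a countable dense family $\{J_k\}$ of nonnegative functions in $C(\mathbb{T}^d)$ and a countable dense set $\mathcal{T} \subset [0,T]$ avoiding the (at most countable) fixed discontinuities of $Q^H$. For each such $t$ the evaluation $\xi_\cdot \mapsto \langle J_k, \xi_t\rangle$ is continuous on a $Q^H$-full measure subset of $D([0,T],\mathcal{M})$ in the Skorohod topology, so weak convergence $Q_{N_k}^H \to Q^H$ transfers the deterministic bound above to
$$
\langle J_k, \xi_t\rangle \le \int_{\mathbb{T}^d}J_k(u)\,du \qquad Q^H\text{-a.s.}
$$
for each $k$ and each such $t$. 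Intersecting over the countable family of pairs $(k,t)$ yields a single Borel set $W \subset D([0,T],\mathcal{M})$ with $Q^H(W)=1$ on which the bound holds for all $k$ and all $t \in \mathcal{T}$. Density of $\{J_k\}$ in $C(\mathbb{T}^d)$ then upgrades the bound to $\xi_t \le du$ as measures for every $t \in \mathcal{T}$, and the Radon--Nikodym theorem produces the desired density $\rho(t,\cdot)$ with values in $[0,1]$.

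The remaining step, extending from $t \in \mathcal{T}$ to every $t \in [0,T]$, is where I expect the main technical work. One clean route is to show that $Q^H$-a.s.\ the path $\xi_\cdot$ has no jumps, so that the bound at dense times passes to all times by continuity: this follows from Lemma \ref{Doob} (which controls the martingale part) together with the observation that single-site updates change $\langle J, \xi_N(t)\rangle$ by $O(1/|V_N|)$, so the modulus of continuity of $t \mapsto \xi_N(t)$ vanishes in the limit. Alternatively, one can bypass path continuity by noting that $\{\mu \in \mathcal{M} : \mu \le du\}$ is weakly closed; combined with right-continuity of Skorohod paths and the density of $\mathcal{T}$, this gives $\xi_t = \xi_{t+} \le du$ for every $t \in [0,T]$ on $W$.
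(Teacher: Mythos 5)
Your argument is correct and is essentially the same as the standard proof the paper invokes by reference to Kipnis--Landim (Section 4, p.~57): the exclusion constraint $\eta_x\le 1$ dominates $\xi_N(t)$ by the equidistributing point measure $|V_N|^{-1}\sum_{x\in V_N}\delta_{\Phi_N(x)}$, and the bound passes to subsequential limits via countably many test functions and times, with the extension to all $t$ handled by the $O(1/|V_N|)$ jump bound. No gaps beyond the routine technicalities you already identify.
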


To simplify the notation, we put $J(t, x):=J(t, \Phi_{N}(x)), H(t,x):=H(t, \Phi_{N}(x))$ for $J, H \in C^{1, 2}([0, T], \mathbb{T}^{d})$, respectively.

Then we have:

\begin{align*}
&N^{2}L_{N}^{H}\langle J_{t}, \xi_{N}(t) \rangle
=\frac{N^{2}}{2|V_{N}|}\sum_{e \in E_{N}}\exp\left[H(t, te)-H(t, oe)\right] \cdot (-\eta_{oe}\eta_{te}+\eta_{oe})(J(t, te)- J(t, oe))   \\
&=\frac{N^{2}}{2|V_{N}|}\sum_{x \in V_{N}}\sum_{e \in E_{N, x}}\left[\left\{\exp\left(H(t, te) - H(t, oe)\right)-1\right\}\cdot \left(J(t, te) - J(t, oe)\right)\eta_{oe}+\left(J(t, te)- J(t,  oe)\right)\eta_{oe}\right] \\
&-\frac{N^{2}}{4|V_{N}|}\sum_{e \in E_{N}}\left\{\exp\left(H(t, te) - H(t, oe)\right)- \exp\left(H(t, oe)-H(t, te)\right)\right\}\cdot \eta_{oe}\eta_{te}(J(t, te)- J(t, oe)).
\end{align*}

For $e \in E$, we denote the directional derivative along ${\bf v}(e)$ by
$$
\nabla_{{\bf v}(e)}H(t, x):=\sum_{i=1}^{d}\frac{\partial H}{\partial x_{i}}(t, x)v_{i}(e), \ \ \ \ \text{for $[0, T]\times \mathbb{T}^{d}$}.
$$

Applying the inequality $|e^{z} - 1 - z| \le (1/2)|z|^{2}e^{|z|}$ for $z \in \mathbb{R}$,
by the regularity of $H$ and by the compactness $\mathbb{T}^{d}$,
there exists a constant $C>0$ not depending on each point of $[0, T]\times \mathbb{T}^{d}$ such that for every $N$ and for every $e \in E_{N}$,
$$
\left| N\{\exp\left(H(t, te)-H(t, oe)\right)-1\} - \nabla_{{\bf v}(e)}H(t, oe)\right| \le \frac{C}{N},
$$
$$
\left| N\{\exp\left(H(t, oe)-H(t, te)\right)-1\} + \nabla_{{\bf v}(e)}H(t, oe)\right| \le \frac{C}{N},
$$
$$
\left| N\left\{J(t, te) - J(t, oe) \right\} - \nabla_{{\bf v}(e)}J(t, oe) \right| \le \frac{C}{N}.
$$

By the convergence of the combinatorial Laplacian in Section\ref{scaling limit}, we have that
$$
N^{2}\sum_{e \in E_{N, x}}\left(J(t, te) - J(t, oe)\right) = \frac{1}{2}\sum_{e \in E_{N, x}}\sum_{i,j=1}^{d}\frac{\partial^{2}J}{\partial x_{i} \partial x_{j}}(t, x)v_{i}(e)v_{j}(e) + o_{N}.
$$
Hence,

\begin{align}\label{N2L}
N^{2}L_{N}^{H}\langle J_{t}, \xi_{N}(t) \rangle	
&=\frac{1}{2|V_{N}|}\sum_{e \in E_{N}}\nabla_{{\bf v}(e)}H(t, oe)\cdot \nabla_{{\bf v}(e)}J(t, oe)\eta_{oe} \nonumber \\
&+ \frac{1}{4|V_{N}|} \sum_{x \in V_{N}}\sum_{e \in E_{N, x}}\sum_{i,j=1}^{d}\frac{\partial^{2}J}{\partial x_{i} \partial x_{j}}(t, x)v_{i}(e)v_{j}(e) \eta_{oe} \nonumber \\
&- \frac{1}{2|V_{N}|}\sum_{e \in E_{N}}\nabla_{{\bf v}(e)}H(t, oe)\cdot \nabla_{{\bf v}(e)}J(t, oe)\eta_{oe}\eta_{te} + o_{N}. 
\end{align}

We replace $\mathbb{P}_{N}^{H}$ by $Q_{N}^{H}$ regarding the empirical density $\xi_{N}$ as the measure on $D([0, T],\mathcal{M})$.
Let us prove the following lemma:

\begin{lem}\label{equi-continuity}
For every $J \in C(\mathbb{T}^{d})$ and for every $\delta >0$,
$$
\lim_{\gamma \to 0}\limsup_{N \to \infty}Q_{N}^{H}\left[\sup_{|t-s|\le \gamma}\Big|\langle J, \xi_{N}(t)\rangle - \langle J, \xi_{N}(s)\rangle \Big|>\delta\right]=0.
$$
\end{lem}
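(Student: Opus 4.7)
The plan is to use the martingale decomposition $\langle J, \xi_N(t)\rangle - \langle J, \xi_N(s)\rangle = (M_N(t)-M_N(s)) + \int_s^t b_N(r)\,dr$ from just before Lemma \ref{Doob}, and to control the martingale and drift contributions separately on short intervals. Since $\eta_x\in\{0,1\}$ forces $|\langle J,\xi_N(t)\rangle - \langle \widetilde J,\xi_N(t)\rangle| \le \|J-\widetilde J\|_\infty$ uniformly in $N$ and $t$, I would first reduce to the case of a time-independent test function $J\in C^{1,2}(\mathbb{T}^d)$ by a standard $\varepsilon/3$ approximation argument, so that the decomposition above and the expansion (\ref{N2L}) apply directly.

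For such a smooth $J$, I would read off (\ref{N2L}) that $b_N(r) = N^2 L_N^H\langle J,\xi_N(r)\rangle$ is a finite sum of empirical averages of the form $|V_N|^{-1}\sum_{e\in E_N} \psi(\Phi_N(oe))\, \eta_{oe}^{a}\eta_{te}^{b}$, where $\psi$ is built from bounded partial derivatives of $J$ and $H$, plus an $o_N$ error. Using $0\le \eta_x\le 1$, the identity $|E_N|/|V_N|=|E_0|/|V_0|$, and the uniform regularity of $J$ and $H$ on $[0,T]\times\mathbb{T}^d$, this yields a constant $C(J,H)>0$ with $\sup_{r,\eta}|b_N(r)|\le C(J,H)$ for all large $N$, and therefore
$$
\sup_{|t-s|\le \gamma}\Bigl|\int_s^t b_N(r)\,dr\Bigr| \le C(J,H)\,\gamma.
$$
For the martingale piece I would use the crude bound $\sup_{|t-s|\le\gamma}|M_N(t)-M_N(s)|\le 2\sup_{0\le u \le T}|M_N(u)|$ combined with Lemma \ref{Doob} and Markov's inequality, which gives $\lim_{N\to\infty}Q_N^H(\sup_{0\le u \le T}|M_N(u)|>\delta')=0$ for every $\delta'>0$.

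Combining the two by the triangle inequality: given $\delta>0$, I would choose $\gamma_0:=\delta/(2C(J,H))$, so that for $\gamma\le \gamma_0$ the drift contribution never exceeds $\delta/2$ and
$$
Q_N^H\Bigl(\sup_{|t-s|\le\gamma}|\langle J,\xi_N(t)\rangle - \langle J,\xi_N(s)\rangle|>\delta\Bigr)\le Q_N^H\Bigl(2\sup_{0\le u\le T}|M_N(u)|>\delta/2\Bigr),
$$
whose right-hand side tends to $0$ as $N\to\infty$ by the preceding step; taking $\gamma\to 0$ afterwards completes the argument. I do not expect a real obstacle here: the nontrivial analytic input, namely the Taylor expansion of the generator against a smooth test function and the $L^2$-estimate of the Dynkin martingale, has already been carried out in (\ref{N2L}) and Lemma \ref{Doob}, so the remaining work is the standard tightness bookkeeping adapted to the empirical measure on the torus.
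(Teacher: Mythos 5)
Your proposal is correct and follows essentially the same route as the paper: the Dynkin decomposition into drift plus martingale, the uniform bound on $b_N$ coming from the expansion (\ref{N2L}), and Lemma \ref{Doob} with the Markov/Chebychev inequality for the martingale part. Your extra reduction to smooth $J$ via the uniform bound $|\langle J,\xi_N(t)\rangle-\langle\widetilde J,\xi_N(t)\rangle|\le\|J-\widetilde J\|_{\infty}$ is a small refinement the paper leaves implicit (its stated bound on $b_N$ really uses $J\in C^{1,2}$), but it does not change the argument.
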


\begin{proof}
For every continuous functions $J: \mathbb{T}^{d} \to \mathbb{R}$, it holds that
$$
\langle J, \xi_{N}(t)\rangle - \langle J, \xi_{N}(s)\rangle=\int_{s}^{t}b_{N}\left(\eta^{N}(s)\right)ds + M_{N}(t) - M_{N}(s).
$$
Since by (\ref{N2L}) there exists a constant $C$ such that for large enough $N$, $b_{N}(t)=\langle (\partial/\partial t)J_{t}, \xi_{N}(t)\rangle + N^{2}L_{N}^{H}\langle J_{t}, \xi_{N}(t)\rangle \le C$ uniformly, we obtain that by the Chebychev inequality and by the triangular inequality, for every $\delta >0$, for every $\gamma >0$ and for large enough $N$, 
$$
Q_{N}^{H}\left[\sup_{|t-s|\le \gamma}\Big|\langle J, \xi_{N}(t)\rangle - \langle J, \xi_{N}(s)\rangle \Big| > \delta \right] \le (1/\delta)\mathbb{E}_{N}^{H}\left[C \gamma + 2 \sup_{0 \le t \le T} \left|M_{N}(t)\right|\right].
$$
Then by Lemma\ref{Doob},
$$
\limsup_{N \to \infty}Q_{N}^{H}\left[\sup_{|t-s| \le \gamma}\Big|\langle J, \xi_{N}(t)\rangle - \langle J, \xi_{N}(s)\rangle \Big| > \delta \right]\le C\frac{\gamma}{\delta}.
$$
Therefore for every $\delta >0$,
$$
\lim_{\gamma \to 0}\limsup_{N \to \infty}Q_{N}^{H}\left[\sup_{|t-s|\le \gamma}\Big|\langle J, \xi_{N}(t)\rangle - \langle J, \xi_{N}(s)\rangle \Big|>\delta \right]=0.
$$
It completes the proof.
\end{proof}

\subsection{Application of replacement theorem}\label{application}

We have the following estimate: There exists a constant $C(H, J)>0$ depending only on $H$ and $J$ such that for every $e \in E_{N}$ and for every $\underline \sigma, \underline \sigma' \in \Gamma_{N}$,
\begin{align*}
&\left|\nabla_{{\bf v}(\underline \sigma' e)}H(t, o\underline \sigma' e)\cdot \nabla_{{\bf v}(\underline \sigma' e)}J(t, o\underline \sigma' e) -\nabla_{{\bf v}(\underline \sigma e)}H(t, o\underline \sigma e)\cdot \nabla_{{\bf v}(\underline \sigma e)}J(t, o\underline \sigma e)\right|  \\
&\le C(H, J)\|\underline \sigma' - \underline \sigma \|_{1}\frac{1}{N}. \\
\end{align*}

Thus, by the regularity of $H$ and $J$, putting $G_{N}(o e):=\nabla_{{\bf v}( e)}H(t, o e)\cdot \nabla_{{\bf v}(e)}J(t, o e)$ for $e \in E_{N}$,
$$
\frac{1}{|V_{N}|}\sum_{e \in E^{0}}\sum_{\underline \sigma \in \Gamma_{N}}\left|G_{N}(o\underline \sigma e)- \widetilde{G_{N}}_{o\underline \sigma e,K}\right| \le C\frac{K}{N} \ \ \ \ \ \text{for some constant $C>0$}.
$$
Here we regard $G(\cdot)$ as a local function bundle independent of states and denote by $\widetilde{G_{N}}_{o\underline \sigma e,K}$ the local average. 
By the uniform continuity of the twice derivative of $J$, putting
$$
F_{N}(x):=\sum_{e \in E_{N, x}}\sum_{i,j=1}^{d}\frac{\partial^{2}J}{\partial x_{i} \partial x_{j}}(t, x)v_{i}(e)v_{j}(e) \ \ \ \ \text{for $x \in V_{N}$},
$$
it holds that
$$
\frac{1}{|V_{N}|}\sum_{x \in  D_{x_{0}}}\sum_{\underline \sigma \in \Gamma_{N}}\left| F_{N}(\underline \sigma x) - \widetilde{F_{N}}_{\underline \sigma x, K}\right| =o_{N}.
$$
Here we also regard $F_{N}(\cdot)$ as a local function bundle.

By the above argument, we obtain that

\begin{align*}
N^{2}L_{N}^{H}\langle J_{t}, \xi_{N}(t) \rangle	
&=\frac{1}{2|V_{N}|}\sum_{e \in E^{0}}\sum_{\underline \sigma \in \Gamma_{N}} G_{N}(o\underline \sigma e)\cdot \widetilde{\eta}_{o\underline \sigma e, K} 
+ \frac{1}{4|V_{N}|}\sum_{x \in  D_{x_{0}}}\sum_{\underline \sigma \in \Gamma_{N}}F_{N}(\underline \sigma x)\cdot \widetilde{\eta}_{\underline \sigma x, K} \\
&- \frac{1}{2|V_{N}|}\sum_{e \in E^{0}}\sum_{\underline \sigma \in \Gamma_{N}} G_{N}(o\underline \sigma e) \widetilde{f^{(e)}}_{o\underline \sigma e, K} + o_{N}.
\end{align*}

Here $f^{(e)}$ is the local function bundle appearing in the third example in Section\ref{local function bundles} and $\widetilde{f^{(e)}}_{\cdot, K}$ its local average.

By applying Theorem\ref{super exponential estimate} and by the continuity of $(\partial^{2}/\partial x_{i}\partial x_{j}) J$, $\nabla_{{\bf v}(e)}H$ and $\nabla_{{\bf v}(e)}J$ on the compact space $[0, T] \times \mathbb{T}^{d}$, 
it holds that for every $t \in [0, T]$ and for every $\delta >0$,

\begin{align*}
\lim_{K \to \infty}\limsup_{\epsilon \to 0}\limsup_{N \to \infty}\frac{1}{|\Gamma_{N}|}\log \mathbb{P}_{N}^{H}\Bigg(\int_{0}^{t}\Big|\frac{1}{|V_{N}|}\sum_{x \in D_{x_{0}}}\sum_{\underline \sigma \in \Gamma_{N}} F_{N}(\underline \sigma x)\left\{ \widetilde{\eta}_{\underline \sigma x, K} - \overline{\eta}_{\underline \sigma x, \epsilon N}\right\}\Big|ds > \delta \Bigg)= -\infty,
\end{align*}

\begin{align*}
\lim_{K \to \infty}\limsup_{\epsilon \to 0}\limsup_{N \to \infty}\frac{1}{|\Gamma_{N}|}\log \mathbb{P}_{N}^{H}\Bigg(\int_{0}^{t}\Big|\frac{1}{|V_{N}|}\sum_{e \in E^{0}}\sum_{\underline \sigma \in \Gamma_{N}}
G_{N}(o\underline \sigma e) \left\{\widetilde{\eta}_{o\underline \sigma e, K} - \overline{\eta}_{o\underline \sigma e, \epsilon N}\right\}\Big|ds > \delta \Bigg)= -\infty,
\end{align*}

and

\begin{align*}
\lim_{K \to \infty}\limsup_{\epsilon \to 0}\limsup_{N \to \infty}\frac{1}{|\Gamma_{N}|}\log \mathbb{P}_{N}^{H}\Bigg(\int_{0}^{t}\Big|\frac{1}{|V_{N}|}\sum_{e \in E^{0}}\sum_{\underline \sigma \in \Gamma_{N}}
G_{N}(o\underline \sigma e) \left\{\widetilde{f^{(e)}}_{o\underline \sigma e, K} - \left(\overline{\eta}_{o\underline \sigma e, \epsilon N}\right)^{2}\right\}\Big|ds > \delta \Bigg)= -\infty.
\end{align*}
Here we use $\langle f^{(e)}_{x} \rangle(\rho)=\rho^{2}$ for every $x \in V_{N}$ in the third estimate above.
By the triangular inequality, it holds that for every $t \in [0, T]$ and for every $\delta >0$,

\begin{align*}
&\limsup_{\epsilon \to 0}\limsup_{N \to \infty} \mathbb{P}_{N}^{H}\Bigg(\int_{0}^{t}\Big|N^{2}L_{N}^{H}\langle J_{s}, \xi_{N}(s) \rangle \\
&- \frac{1}{2|V_{N}|}\sum_{e \in E^{0}}\sum_{\underline \sigma \in \Gamma_{N}}\nabla_{{\bf v}(e)}H(s, o\underline \sigma e)\cdot \nabla_{{\bf v}(e)}J(s, o\underline \sigma e)\cdot( \overline{\eta}_{\underline \sigma x_{0}, \epsilon N}) \\
&- \frac{1}{4|V_{N}|}\sum_{x \in  D_{x_{0}}}\sum_{\underline \sigma \in \Gamma_{N}}\sum_{e \in E_{N, \underline \sigma x}}\sum_{i,j=1}^{d}\frac{\partial^{2}J}{\partial x_{i} \partial x_{j}}(t, \underline \sigma x)v_{i}(e)v_{j}(e)\cdot (\overline{\eta}_{\underline \sigma x_{0}, \epsilon N}) \\
&+ \frac{1}{2|V_{N}|}\sum_{e \in E^{0}}\sum_{\underline \sigma \in \Gamma_{N}}\nabla_{{\bf v}(e)}H(s, o\underline \sigma e)\cdot \nabla_{{\bf v}(e)}J(s, o\underline \sigma e)\cdot( \overline{\eta}_{\underline \sigma x_{0}, \epsilon N})^{2}
 \Big|ds > \delta \Bigg)= 0.
\end{align*}

Applying the convergence of the combinatorial Laplacian in Section \ref{scaling limit}, we have that
$$
\frac{1}{4|V_{N}|}\sum_{x \in  D_{x_{0}}}\sum_{\underline \sigma \in \Gamma_{N}}\sum_{e \in E_{N, \underline \sigma x}}\sum_{i,j=1}^{d}\frac{\partial^{2}J}{\partial x_{i} \partial x_{j}}(t, \underline \sigma x)v_{i}(e)v_{j}(e)
=\frac{1}{|\Gamma_{N}|}\sum_{\underline \sigma \in \Gamma_{N}} \nabla \mathbb{D} \nabla J(t, \underline \sigma x_{0}) + o_{N}.
$$

Recall that
$$
\langle J_{t}, \xi_{N}(t)\rangle - \langle J_{0}, \xi_{N}(0) \rangle =\int_{0}^{t}\left\{\langle \partial_{s} J_{s}, \xi_{N}(s) \rangle + N^{2}L_{N}^{H}\langle J_{s}, \xi_{N}(s)\rangle \right\}ds + M_{N}(t).
$$

By Lemma\ref{Doob} and by the Chebychev inequality, for every $\delta >0$, 
$$
Q_{N}^{H}\left(\sup_{0 \le t \le T}|M_{N}(t)| > \delta \right) \le (1/\delta)\mathbb{E}_{N}^{H}\left(\sup_{0 \le t \le T}|M_{N}(t)|\right) \to 0 \ \ \ \ \ \ \text{as $N \to \infty$}.
$$
Furthermore, by the triangular inequality, we have that for every $\delta > 0$ and for every $t \in [0, T]$,

\begin{align*}
&\limsup_{\epsilon \to 0}\limsup_{N \to \infty}Q_{N}^{H}\Bigg( \Bigg| \langle J_{t}, \xi_{N}(t)\rangle - \langle J_{0}, \xi_{N}(0) \rangle - \int_{0}^{t}\Bigg\{\langle \partial_{s} J_{s}, \xi_{N}(s) \rangle     \\
&+ \frac{1}{2|V_{N}|}\sum_{e \in E^{0}}\sum_{\underline \sigma \in \Gamma_{N}}\nabla_{{\bf v}(e)}H(t, o\underline \sigma e)\cdot \nabla_{{\bf v}(e)}J(t, o\underline \sigma e)\cdot( \overline{\eta}_{\underline \sigma x_{0}, \epsilon N})
+ \frac{1}{|\Gamma_{N}|}\sum_{\underline \sigma \in \Gamma_{N}}\nabla \mathbb{D} \nabla J(t, \underline \sigma x_{0})\cdot (\overline{\eta}_{\underline \sigma x_{0}, \epsilon N})	\\
&- \frac{1}{2|V_{N}|}\sum_{e \in E^{0}}\sum_{\underline \sigma \in \Gamma_{N}}\nabla_{{\bf v}(e)}H(t, o\underline \sigma e)\cdot \nabla_{{\bf v}(e)}J(t, o\underline \sigma e)\cdot \left(\overline \eta_{\underline \sigma x_{0}, \epsilon N}\right)^{2}\Bigg\}ds \Bigg| > \delta \Bigg)=0.  \\
\end{align*}

By Lemma\ref{characteristic function}, we replace $\overline \eta_{\underline \sigma x_{0}, \epsilon N}$ by $\langle \xi_{N}, \chi_{\Phi_{N}(\underline \sigma x_{0}), \epsilon}\rangle$ and the summation for $\underline \sigma \in \Gamma_{N}$ by the integral.
Since by Lemma\ref{equi-continuity} and by Proposition\ref{relative compactness}, the sequence $\{Q_{N}^{H}\}$ is relatively compact in the weak topology, for a limit point $Q^{H}$
there exists a subsequence $\{Q_{N_{k}}^{H}\}$ weakly converging to $Q^{H}$.
By Proposition\ref{absolute continuity}, 
the empirical density $\xi_{N_{k}}$ concentrates on an absolutely continuous trajectory $\rho_{t}d\mu$ as $k \to \infty$.
By the assumption of Theorem\ref{main}, we replace $\langle J_{0}, \xi_{N}(0)\rangle$ by $\langle J_{0}, \rho_{0} \rangle$, and then we have that for every $\delta >0$ and for every $t \in [0,T]$,

\begin{align*}
&\limsup_{\epsilon \to 0}Q^{H}\Bigg( \Bigg| \langle J_{t}, \rho_t \rangle - \langle J_{0}, \rho_0 \rangle - \int_{0}^{t}\Bigg\{\langle \partial_{s} J_{s}, \rho_{s} \rangle      \\
&+ \frac{1}{2|V_{0}|}\sum_{e \in E_{0}}\int_{\mathbb{T}^{d}}\nabla_{{\bf v}(e)}H(t, z)\cdot \nabla_{{\bf v}(e)}J(t, z)\cdot\langle \rho_{t}, \chi_{z, \epsilon}\rangle \mu(dz)
+ \int_{\mathbb{T}^{d}}\nabla \mathbb{D} \nabla J(t, z)\cdot \langle \rho_{t}, \chi_{z, \epsilon}\rangle \mu(dz) \\
&- \frac{1}{2|V_{0}|}\sum_{e \in E_{0}}\int_{\mathbb{T}^{d}}\nabla_{{\bf v}(e)}H(t, z)\cdot \nabla_{{\bf v}(e)}J(t, z)\cdot \langle \rho_{t}, \chi_{z, \epsilon}\rangle^{2}\mu(dz)\Bigg\}ds \Bigg| > \delta \Bigg)=0.
\end{align*}

Here we replace ${\bf v}(e)$ for $e \in E^{0}$ by ${\bf v}(e)$ for $e \in E_{0}$ by the $\Gamma$-invariance of ${\bf v}(\cdot)$.

By the Lebesgue dominated convergence theorem as $\epsilon \to 0$ and by the triangular inequality, we have that for every $\delta >0$ and for every $t \in [0,T]$,

\begin{align*}
&Q^{H}\Bigg( \Bigg| \langle J_{t}, \rho_t \rangle - \langle J_{0}, \rho_0 \rangle - \int_{0}^{t}\Bigg\{\langle \partial_{s} J_{s}, \rho_{s} \rangle      
+ \frac{1}{2|V_{0}|}\sum_{e \in E_{0}}\langle \nabla_{{\bf v}(e)}J \cdot \nabla_{{\bf v}(e)}H, \rho_{t}\rangle 
+ \langle \nabla \mathbb{D} \nabla J, \rho_{t} \rangle	\\
&\ \ \ \ \ \ \ \ \ \ \ \ \ \ \ \ \ \ \ \ \ \ \ \ \ \ \ \ \ \ \ \ \ \ \ \ \ \ \ \ \ \ \ \ \ \ \ \ \ \ \ \ \ \ \ \ \ \ \ \ \ \ \ \ \ \ \ \ \ \ \ \ - \frac{1}{2|V_{0}|}\sum_{e \in E_{0}}\langle  \nabla_{{\bf v}(e)}J \cdot \nabla_{{\bf v}(e)}H, \rho_{t}^{2}\rangle \Bigg\}ds \Bigg| > \delta \Bigg)=0.
\end{align*}

This shows $\xi_{N_{k}}(t)$ concentrates on $\rho$, which is a weak solution of the quasi-linear parabolic equation (\ref{pde1}).
Furthermore, $\rho$ has finite energy by Lemma\ref{energy}.
By the uniqueness result of the weak solution Lemma\ref{uniqueness} in Section\ref{appendixB}, we conclude that the limit point $Q^{H}$ of $\{Q_{N}^{H}\}_{N}$ is unique and $\xi_{N}$ concentrates on $\rho d\mu$ as $N$ goes to the infinity. 
That is, for every $\delta >0$,
$$
\lim_{N \to \infty}Q_{N}^{H}\Big(d_{Sk.}\left(\xi_{N}, \rho d\mu \right) > \delta \Big)=0,
$$
where $d_{Sk.}$ is the Skorohod distance in $D([0,T], \mathcal{M})$.
In particular, since $T>0$ is arbitrary, it follows that for every $t \ge 0$, for every $\delta >0$ and for every continuous functions $J \in C(\mathbb{T}^{d})$,
$$
\lim_{N \to \infty}\mathbb{P}_{N}^{H}\left[\left|\frac{1}{|V_{N}|}\sum_{x \in V_{N}}J\left(\Phi_{N}(x)\right)\eta^{N}_{x}(t)- \int_{\mathbb{T}^{d}}J(u)\rho(t,u)\mu(du) \right| > \delta \right]=0.
$$
It completes the proof of Theorem\ref{main}.

\section{Appendix;A}\label{appendixA}

\subsection{Approximation by combinatorial metrics}\label{approximation}

Take a $\mathbb{Z}$-basis $\sigma_{1}, \dots, \sigma_{d}$ of $\Gamma$ and identify $\Gamma$ with $\mathbb{Z}^{d}$.
Define the standard generator system of $\Gamma$ by $S=\{\sigma_{1}, \dots, \sigma_{d}, -\sigma_{1}, \dots, -\sigma_{d}\}$. We introduce the length function associated with $S$, $|\cdot| : \Gamma \to \mathbb{N}$ by
$$
|\sigma|:=\min\left\{l \ \Big| \ \sigma=\sum_{k=1}^{l}\epsilon_{i_{k}}\sigma_{i_{k}}, \epsilon_{i_{k}} \in \{-1, 1\}, i_{k} \in \{1, \dots, d\}\right \}
$$ 
for $\sigma \in \Gamma$. 
Then the map $(\sigma, \sigma') \in \Gamma \times \Gamma \mapsto |\sigma - \sigma'| \in \mathbb{N}$ induces the metric in $\Gamma$, which is called the word metric associated with $S$. 
Let $\underline{\sigma_{1}}, \dots, \underline{\sigma_{d}} \in \Gamma_{N}$ be the image of $\sigma_{1}, \dots, \sigma_{d}$ by the natural homomorphism $\Gamma \to \Gamma_{N}$. 
Then $\underline S:=\{\underline{\sigma_{1}}, \dots, \underline{\sigma_{d}}, -\underline{\sigma_{1}}, \dots, -\underline
{\sigma_{d}}\}$ generates $\Gamma_{N}$. 
The length function associated with $\underline S$, $|\cdot|: \Gamma_{N} \to \mathbb{N}$ is also defined in the same way.
To abuse the notation, we denote the word metric in $\Gamma_{N}$ associated with $\underline S$ by the same symbol $|\cdot|$.

We define an $l^{1}$-norm in $\Gamma \otimes \mathbb{R} \cong \mathbb{R}^{d}$ by 
$$
\|{\bf x}\|_{1}:=\sum_{i=1}^{d}|x_{i}|
$$
for ${\bf x}=(x_{1}, \dots, x_{d}) \in \mathbb{R}^{d}$ and the distance $\widetilde d_{1}$ in $\mathbb{R}^{d}$ by $\widetilde d_{1}({\bf x}, {\bf x'}):=\|{\bf x}-{\bf x'}\|_{1}$ for ${\bf x}, {\bf x'} \in \mathbb{R}^{d}$.
Denote by $d_{1}$ the induced metric in $\mathbb{T}^{d}$ from $\widetilde  d_{1}$.

Fix $x_{0} \in V$ and a fundamental domain $D_{x_{0}} \subset V$ such that $x_{0} \in D_{x_{0}}$ and $D_{x_{0}}$ is connected in the following sense:
For any $x, y \in D_{x_{0}}$ there exist a path $e_{1}, \dots, e_{l}$ in $E$ such that $oe_{1}=x, te_{l}=y$ and $oe_{1}, te_{1}, \dots, oe_{l}, te_{l}$ are all in $D_{x_{0}}$.
This kind of set $D_{x_{0}}$ always exists if we take a spanning tree in $X_{0}$ and its lift in $X$.
To abuse the notation, we denote by $x_{0} \in V_{N}, D_{x_{0}} \subset V_{N}$ the images of $x_{0}, D_{x_{0}}$ by the covering map, respectively.
We also fix a fundamental domain $E^{0} \subset E$ which is identified with $E_{0}$.
To abuse the notation, we denote by $E^{0} \subset E_{N}$ the image of $E^{0}$ by the covering map.

We define the map $[\cdot]:V \to \Gamma$ as follows:
For $x \in V$, there exists a unique element $\sigma \in \Gamma$ such that $x \in \sigma D_{x_{0}}$ since $\Gamma$ acts on $X$ freely. 
Define $[x]:=\sigma$. 
Since there exists a constant $C_{0}:=\max_{x \in V}\|\Phi([x]x_{0})-\Phi(x)\|_{1}$ by the $\Gamma$-perodicity, we have that 
$$
\|\Phi([x]x_{0})-\Phi([z]x_{0})\|_{1} -2C_{0} \le \|\Phi(x)-\Phi(z)\|_{1} \le \|\Phi([x]x_{0})-\Phi([z]x_{0})\|_{1} + 2C_{0}
$$
for any $x, z \in V$.
Furthermore, since $\|\Phi([x]x_{0})- \Phi([z]x_{0})\|_{1}=\left|[x]-[z]\right|$, we have that
$$
\left|[x]-[z]\right| -2C_{0} \le \|\Phi(x)-\Phi(z)\|_{1} \le \left|[x] -[z]\right| +2C_{0}.
$$

As in Section\ref{harmonic}, suppose that we have an injective homomorphism $\psi: \Gamma \to \mathbb{R}^{d}$ such that
$$
\psi(\Gamma)=\left\{\sum_{i=1}^{d}k_{i}u_{i} \ | \ \text{$k_{i}$ integers and $u_{i} \in \mathbb{R}^{d}$, $u_{1}, \dots, u_{d}$ are linearly independent}\right\}.
$$
Take a fundamental parallelotope
$P:=\left\{\sum_{i=1}^{d}a_{i}u_{i} \ | \ 0 \le a_{i} < 1 \right\} \subset \Gamma \otimes \mathbb{R}$.
In the similar way to the above,
we define the map
$[\cdot]:\Gamma \otimes \mathbb{R} \to \Gamma$ as follows:
For ${\bf x} \in \Gamma \otimes \mathbb{R}$, there exists a unique element $\sigma \in \Gamma$ such that ${\bf x} \in \sigma P$. Define $[{\bf x}]:=\sigma$.
Since there exists a constant $C_{1}:=\max_{{\bf x} \in \Gamma \otimes \mathbb{R}}\|\Phi([{\bf x}]x_{0})- {\bf x}\|_{1}$ by the $\Gamma$-periodicity, we have that 
$$
\|\Phi([{\bf x}]x_{0})-\Phi([{\bf z}]x_{0})\|_{1} -2C_{1} \le \|{\bf x} - {\bf z}\|_{1} \le \|\Phi([{\bf x}]x_{0})-\Phi([{\bf z}]x_{0})\|_{1} + 2C_{1}
$$
for any ${\bf x}, {\bf z} \in \Gamma \otimes \mathbb{R}$.
Furthermore, we have that 
$$
\left|[{\bf x}]-[{\bf z}]\right| -2C_{1} \le \|{\bf x}-{\bf z}\|_{1} \le \left|[{\bf x}] -[{\bf z}]\right| +2C_{1}.
$$

Let us define an $\epsilon$-ball in $\mathbb{T}^{d}$ of the center ${\bf z} \in \mathbb{T}^{d}$ by 
$B^{1}_{{\bf z}}(\epsilon):=\{{\bf x} \in \mathbb{T}^{d} \ | \ d_{1}({\bf x}, {\bf z}) \le \epsilon \}$. We show the following:

\begin{lem}\label{ball approximation}
There exists a constant $C(\epsilon)$ depending only on $\epsilon$ such that for any ${\bf z} \in \mathbb{T}^{d}$ and for any $N \ge 1$,
$$
\left| \frac{vol\left(B^{1}_{{\bf z}}(\epsilon)\right)}{vol\left(\mathbb{T}^{d}\right)} - \frac{\left|\cup_{|\sigma| \le \epsilon N}\sigma D_{x_{0}} \right|}{|V_{N}|}\right| \le \frac{C(\epsilon)}{N}.
$$
Here $vol(T)$ stands for the volume of a Borel set $T$ and $|U|$ the cardinality of a set $U$.
\end{lem}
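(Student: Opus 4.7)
My plan is to reduce both sides to volume ratios on $\mathbb{T}^d$ using the identifications $\Gamma\cong\mathbb{Z}^d$, $\Gamma_N\cong(\mathbb{Z}/N\mathbb{Z})^d$, and $\mathbb{T}^d=\mathbb{R}^d/\mathbb{Z}^d$ induced by the basis $\sigma_1,\dots,\sigma_d$, and then to compare them via a Riemann-sum argument using unit cells of $\ell^1$-diameter $d/N$.

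First I would rewrite the discrete ratio. Since $D_{x_0}$ is a fundamental domain for the $\Gamma$-action, distinct elements of $\Gamma_N$ give disjoint translates $\underline\sigma D_{x_0}\subset V_N$ each of cardinality $|V_0|$, so $|V_N|=N^d|V_0|$ and
$$
\frac{\bigl|\bigcup_{|\sigma|\le\epsilon N}\sigma D_{x_0}\bigr|}{|V_N|}=\frac{\bigl|\{\underline\sigma\in\Gamma_N:|\underline\sigma|\le\epsilon N\}\bigr|}{N^d}.
$$
On the continuous side $\mathrm{vol}(\mathbb{T}^d)=1$, and by translation invariance the ratio $\mathrm{vol}(B^1_{\bf z}(\epsilon))/\mathrm{vol}(\mathbb{T}^d)$ is independent of ${\bf z}$, so I may take ${\bf z}=0$.

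Next I would build the bridge between the lattice count and the torus volume by assigning to each $\sigma\in\mathbb{Z}^d$ the cell $C_\sigma:=(\sigma+[0,1)^d)/N\subset\mathbb{R}^d$ and setting $\widetilde A_N:=\bigcup_{\|\sigma\|_1\le\epsilon N}C_\sigma$. The cells are disjoint of volume $N^{-d}$, and two cells have the same image under $\pi_\infty:\mathbb{R}^d\to\mathbb{T}^d$ iff their indices differ by an element of $N\mathbb{Z}^d$; hence
$$
\mathrm{vol}\bigl(\pi_\infty(\widetilde A_N)\bigr)=\frac{\bigl|\{\underline\sigma\in\Gamma_N:|\underline\sigma|\le\epsilon N\}\bigr|}{N^d}.
$$
Since every point of $C_\sigma$ lies within $\ell^1$-distance $d/N$ of $\sigma/N$, I obtain the sandwich $B^1_0(\epsilon-d/N)\subset\widetilde A_N\subset B^1_0(\epsilon+d/N)$ in $\mathbb{R}^d$, and because $B^1_0(r)$ on the torus is by definition the $\pi_\infty$-image of its Euclidean counterpart, the same inclusions hold after projection.

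Combining with $B^1_0(\epsilon-d/N)\subset B^1_0(\epsilon)\subset B^1_0(\epsilon+d/N)$ on $\mathbb{T}^d$ then yields
$$
\bigl|\mathrm{vol}(\pi_\infty(\widetilde A_N))-\mathrm{vol}(B^1_0(\epsilon))\bigr|\le\mathrm{vol}_{\mathbb{T}^d}\bigl(B^1_0(\epsilon+d/N)\bigr)-\mathrm{vol}_{\mathbb{T}^d}\bigl(B^1_0(\epsilon-d/N)\bigr).
$$
Since $\pi_\infty$ is volume-nonincreasing, the right-hand shell is bounded by the corresponding Euclidean shell, whose volume from $\mathrm{vol}_{\mathbb{R}^d}(B^1_0(r))=(2r)^d/d!$ equals $\frac{1}{d!}\bigl[(2\epsilon+2d/N)^d-(2\epsilon-2d/N)^d\bigr]=O_\epsilon(1/N)$. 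Absorbing the $\epsilon$-dependent constant into $C(\epsilon)$ completes the proof. The only mildly subtle point is ensuring the $\mathbb{R}^d$-sandwich survives projection when $\epsilon$ is large enough to cause wrap-around on the torus, but this is automatic: the Euclidean shell estimate already dominates any collapse produced by overlapping translates, because $\pi_\infty$ only decreases volume.
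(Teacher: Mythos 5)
Your argument is correct and is essentially the paper's own proof: both sandwich the union of scaled fundamental cells between $\ell^{1}$-balls of radii $\epsilon\pm O(1/N)$ and bound the discrepancy by the volume $\tfrac{2^{d}}{d!}\bigl((\epsilon+O(1/N))^{d}-(\epsilon-O(1/N))^{d}\bigr)$ of the resulting $\ell^{1}$-annulus. The only (harmless) differences are that you reduce to ${\bf z}=0$ by translation invariance where the paper carries the center along via the constant $C_{1}$, and that you treat the passage to the quotient torus (multiplicity of cells modulo $N\Gamma$) slightly more explicitly.
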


\begin{proof}
For any ${\bf z} \in \mathbb{T}^{d}$, take a lift $\tilde {\bf z} \in \mathbb{R}^{d}$ then $\|\Phi([\tilde {\bf z}]x_{0})- \tilde {\bf z}\|_{1} \le C_{1}$.
For sufficiently small $\epsilon >0$, take a lift $\widetilde B^{1}_{\tilde{\bf z}}(\epsilon) \subset \mathbb{R}^{d}$ of $B^{1}_{{\bf z}}(\epsilon) \subset \mathbb{T}^{d}$. Again, from the above argument, it holds that $\cup_{|\sigma| \le \epsilon N -2C_{1}}\sigma [\tilde{\bf z}]P \subset \widetilde B^{1}_{\tilde{\bf z}}(N\epsilon) \subset \cup_{|\sigma| \le \epsilon N +2C_{1}}\sigma [\tilde{\bf z}]P$.

Note that $vol\left(\widetilde B^{1}_{\tilde{\bf z}}(\epsilon)\right)=vol\left(B^{1}_{{\bf z}}(\epsilon)\right)$, and thus
$\left|N^{d}vol\left(B^{1}_{{\bf z}}(\epsilon)\right)- vol\left( \cup_{|\sigma| \le \epsilon N}\sigma [\tilde{\bf z}]P\right)\right| \le vol\left(P \right)(2^{d}/d!)\left((\epsilon N +2C_{1})^{d} -(\epsilon N -2C_{1})^{d}\right)$.

Since $\left|\{\sigma \in \Gamma \ | \ |\sigma| \le \epsilon N\}\right|=vol\left( \cup_{|\sigma| \le \epsilon N}\sigma [\tilde{\bf z}]P\right) / vol\left(P\right)= \left| \cup_{|\sigma| \le \epsilon N}\sigma [\tilde{\bf z}]D_{x_{0}}\right| / |V_{0}|$ and $vol\left(P\right)=vol\left(\mathbb{T}^{d}\right)$,
it concludes that there exists a constant $C(\epsilon)>0$ depending only on $\epsilon$ such that
$$
\left| \frac{vol\left(B^{1}_{{\bf z}}(\epsilon)\right)}{vol\left(\mathbb{T}^{d}\right)} - \frac{\left|\cup_{|\sigma| \le \epsilon N}\sigma[\tilde {\bf z}]D_{x_{0}} \right|}{N^{d}|V_{0}|}\right| \le \frac{C(\epsilon)}{N}.
$$
The cardinality of the set $\cup_{|\sigma| \le \epsilon N}\sigma D_{x_{0}}$ is invariant under translation.
It completes the proof.
\end{proof}

Let us define a measure $\mu$ on $\mathbb{T}^{d}$ by $\mu:=\left(1/vol\left(\mathbb{T}^{d}\right)\right)d{\bf x}$.
Let $\chi_{{\bf z}, \epsilon}:\mathbb{T}^{d} \to \mathbb{R}$ be a characteristic function defined by
$$
\chi_{{\bf z}, \epsilon}:=\frac{1}{\mu\left(B^{1}_{{\bf z}}(\epsilon)\right)}1_{B^{1}_{{\bf z}}(\epsilon)} \ \ \ \ \ \text{on $\mathbb{T}^{d}$}.
$$
For the empirical density $\xi_{N}:=\left(1/|V_{N}|\right)\sum_{x \in V_{N}}\eta_{x}\delta_{\Phi_{N}(x)}$ on $\mathbb{T}^{d}$,
$\eta \in Z_{N}$, then we have the following lemma.

\begin{lem}\label{characteristic function}
There exists a constant $C(\epsilon) >0$ depending only on $\epsilon >0$, such that for any $\eta \in Z_{N}$ and any $z \in V_{N}$,
$$
\left| \langle \xi_{N}, \chi_{\Phi_{N}(z),\epsilon}\rangle -\frac{1}{\left|\cup_{|\sigma|\le \epsilon N}\sigma [z]D_{x_{0}}\right|}\sum_{x \in \cup_{|\sigma|\le \epsilon N}\sigma [z]D_{x_{0}}} \eta_{x}\right| 
\le 
\frac{C(\epsilon)}{N},
$$
where 
$$
\langle \xi_{N}, \chi_{\Phi_{N}(z),\epsilon}\rangle=\frac{1}{|V_{N}|}\sum_{x \in V_{N}}\chi_{\Phi_{N}(z),\epsilon}(\Phi_{N}(x))\eta_{x}.
$$
\end{lem}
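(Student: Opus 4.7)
The plan is to compare the continuous average $\langle \xi_N, \chi_{\Phi_N(z), \epsilon}\rangle$ to the discrete average over $T_z := \cup_{|\sigma| \le \epsilon N} \sigma[z] D_{x_0}$ by a triangle-inequality argument, showing that the numerators and denominators of the two averages each match up to boundary errors of order $N^{d-1}$, while the denominators themselves are of order $N^d$; this will produce the advertised $O(1/N)$ bound.

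First I would unfold the definition. Setting $S_z := \{x \in V_N : \Phi_N(x) \in B^1_{\Phi_N(z)}(\epsilon)\}$, one has
\[ \langle \xi_N, \chi_{\Phi_N(z), \epsilon}\rangle = \frac{1}{|V_N|\, \mu(B^1_{\Phi_N(z)}(\epsilon))} \sum_{x \in S_z} \eta_x. \]
Lifting to the universal cover and using the sandwich $|[\tilde x]-[\tilde z]| - 2C_0 \le \|\Phi(\tilde x)-\Phi(\tilde z)\|_1 \le |[\tilde x]-[\tilde z]| + 2C_0$ established in Section \ref{approximation}, the lifts of the points of $S_z$ are contained between $\bigcup_{|\sigma| \le \epsilon N - 2C_0}\sigma[\tilde z] D_{x_0}$ and $\bigcup_{|\sigma| \le \epsilon N + 2C_0}\sigma[\tilde z] D_{x_0}$, so the symmetric difference $S_z \triangle T_z$ lies in a shell of bounded thickness around the combinatorial sphere of radius $\epsilon N$. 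By an elementary count of lattice points this gives $|S_z \triangle T_z| \le C_1(\epsilon) N^{d-1}$, and since $0 \le \eta_x \le 1$,
\[ \Bigl| \sum_{x \in S_z}\eta_x - \sum_{x \in T_z}\eta_x \Bigr| \le C_1(\epsilon)\, N^{d-1}. \]

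For the denominators, Lemma \ref{ball approximation} yields $\bigl| |V_N|\mu(B^1_{\Phi_N(z)}(\epsilon)) - |T_z| \bigr| \le C(\epsilon) |V_N|/N = O(N^{d-1})$, while both $a := |V_N|\mu(B^1_{\Phi_N(z)}(\epsilon))$ and $b := |T_z|$ are of order $\epsilon^d N^d$, so $|a^{-1}-b^{-1}| = O(\epsilon^{-(d+1)} N^{-(d+1)})$. Combining via
\[ \Bigl|\frac{1}{a}\sum_{x\in S_z}\eta_x - \frac{1}{b}\sum_{x\in T_z}\eta_x\Bigr| \le \frac{|S_z\triangle T_z|}{a} + \Bigl|\frac{1}{a}-\frac{1}{b}\Bigr|\sum_{x\in T_z}\eta_x, \]
and using $\sum_{x\in T_z}\eta_x \le |T_z| = O(N^d)$, each of the two terms on the right is bounded by a constant depending only on $\epsilon$ divided by $N$, which is exactly the estimate claimed.

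I do not expect any serious obstacle: the argument is essentially discretization bookkeeping, and the main geometric content --- that $\ell^1$-balls in $\mathbb{T}^d$ are well approximated by unions of fundamental domains translated by short group elements --- has already been packaged into Lemma \ref{ball approximation}. The one delicate point is to ensure that the shell containing $S_z \triangle T_z$ has $N$-independent thickness (determined only by $C_0$), so that its cardinality genuinely scales like $N^{d-1}$ rather than $N^d$; this is precisely what the bounds on $\|\Phi(x)-\Phi([x]x_0)\|_1$ from Section \ref{approximation} guarantee.
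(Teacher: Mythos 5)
Your proposal is correct and follows essentially the same route as the paper's own proof: both compare numerators by trapping the discretized ball between $\bigcup_{|\sigma|\le \epsilon N - 2C_0}\sigma[\tilde z]D_{x_0}$ and $\bigcup_{|\sigma|\le \epsilon N + 2C_0}\sigma[\tilde z]D_{x_0}$ so the discrepancy lives in a shell of $N$-independent combinatorial thickness of cardinality $O(N^{d-1})$, and both compare denominators via Lemma \ref{ball approximation} before assembling the two error terms by the triangle inequality. No gaps.
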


\begin{proof}
Take a lift $\tilde z \in V$ of $z \in V_{N}$ and a lift $\widetilde B^{1}_{(1/N)\Phi(\tilde z)}(\epsilon) \subset \Gamma \otimes \mathbb{R}$ of $B^{1}_{\Phi_{N}(z)}(\epsilon) \subset \mathbb{T}^{d}$.
In the similar way to the proof of Lemma\ref{ball approximation}, we obtain that
$$
\bigcup_{|\sigma|\le \epsilon N -2C_{0}}\sigma [\tilde z]D_{x_{0}} 
\subset 
\left\{x \in V \ \Bigg| \ \left\|\frac{1}{N}\Phi(x) - \frac{1}{N}\Phi(\tilde z)\right\|_{1} \le \epsilon \right\}
\subset 
\bigcup_{|\sigma|\le \epsilon N +2C_{0}}\sigma [\tilde z]D_{x_{0}}.
$$
Furthermore, we take a lift $\widetilde \eta \in Z$ of $\eta \in Z_{N}$, then it holds that 
$\sum_{x \in V_{N}, \Phi_{N}(x) \in B^{1}_{\Phi_{N}(z)}(\epsilon)} \eta_{x}=\sum_{x \in V_{N}, (1/N)\Phi(x) \in\widetilde B^{1}_{(1/N)\Phi(\tilde z)}(\epsilon)} \widetilde \eta_{x}$ and 
$$
\left|\frac{1}{|V_{N}|}\sum_{x \in V, \left\|\frac{1}{N}\Phi(x)-\frac{1}{N}\Phi(\tilde z)\right\|_{1}\le \epsilon} \widetilde \eta_{x} - \frac{1}{|V_{N}|}\sum_{x \in \cup_{|\sigma|\le \epsilon N}\sigma [\tilde z]D_{x_{0}}} \widetilde \eta_{x}\right|
\le 
\frac{1}{|V_{N}|}\sum_{x \in \cup_{\epsilon N - 2C_{0} \le |\sigma|\le \epsilon N +2C_{0}}\sigma [\tilde z]D_{x_{0}}}\widetilde \eta_{x}.
$$
The last term is bounded by $(|V_{0}|/|V_{N}|)(2^{d}/d!)\left((\epsilon N +2C_{0})^{d}-(\epsilon N -2 C_{0})^{d}\right)$, and thus there exists a constant $C_{1}(\epsilon)$ depending only on $\epsilon$ such that
$$
\left|\frac{1}{|V_{N}|}\sum_{x \in V, \left\|\frac{1}{N}\Phi(x)-\frac{1}{N}\Phi(\tilde z)\right\|_{1}\le \epsilon} \widetilde \eta_{x} - \frac{1}{|V_{N}|}\sum_{x \in \cup_{|\sigma|\le \epsilon N}\sigma [\tilde z]D_{x_{0}}} \widetilde \eta_{x}\right|
\le 
\frac{C_{1}(\epsilon)}{N}.
$$
By Lemma\ref{ball approximation} and $\left|\cup_{|\sigma| \le \epsilon N}\sigma[(1/N)\Phi(z)]D_{x_{0}} \right|=\left|\cup_{|\sigma| \le \epsilon N}\sigma D_{x_{0}} \right|=\left|\cup_{|\sigma| \le \epsilon N}\sigma[z]D_{x_{0}} \right|$,
\begin{align*}
\left| \frac{1}{|V_{N}|\mu \left(B^{1}_{\Phi_{N}(z)}(\epsilon)\right)} - \frac{1}{\left|\cup_{|\sigma| \le \epsilon N}\sigma[z]D_{x_{0}} \right|}\right| 
&\le
\frac{1}{|V_{N}|\mu\left(B^{1}_{\Phi_{N}(z)}(\epsilon)\right)}\frac{1}{\left|\cup_{|\sigma| \le \epsilon N}\sigma[z]D_{x_{0}} \right|}\frac{C(\epsilon)}{N} |V_{N}|		\\
&\le 
\frac{C_{2}(\epsilon)}{N^{d+1}},
\end{align*}
where $C_{2}(\epsilon)$ is a constant depending only on $\epsilon$.
Finally,
\begin{align*}
\left|\langle \xi_{N}, \chi_{\Phi_{N}(z),\epsilon}\rangle -\frac{1}{\left|\cup_{|\sigma|\le \epsilon N}\sigma [z]D_{x_{0}}\right|}\sum_{x \in \cup_{|\sigma|\le \epsilon N}\sigma [z]D_{x_{0}}} \eta_{x}\right|
&\le \frac{1}{\mu\left(B^{1}_{\Phi_{N}(z)}(\epsilon)\right)}\frac{C_{1}(\epsilon)}{N} + \frac{C_{2}(\epsilon)}{N^{d+1}}|V_{0}|(\epsilon N)^{d} \\
&\le \frac{C_{3}(\epsilon)}{N},
\end{align*}
where $C_{3}(\epsilon)$ is a constant depending only on $\epsilon$. It completes the proof.
\end{proof}

\section{Appendix;B}\label{appendixB}

\subsection{Energy estimate}

In this section, we prove the following lemma.

\begin{lem}[Energy estimate]\label{energy}
Suppose that $\{Q_{N}\}_{N \ge 1}$ is a sequence of probability measures on $D([0,T],\mathcal{M})$.
For any limit point $Q^{*}$ of $\{Q_{N}\}_{N \ge 1}$, 
$Q^{*}$-a.s. there exists a measurable function $\rho(t,u)$ such that $\xi_{t}=\rho_{t}d\mu$, $\rho$ has 
$$
\frac{\partial}{\partial x_{i}}\rho \in L^{2}([0,T]\times \mathbb{T}^{d}),
$$
for $1 \le i \le d$,
and satisfies
$$
\int_{0}^{T}\int_{\mathbb{T}^{d}}\frac{\partial}{\partial x_{i}}J \rho d\mu dt= - \int_{0}^{T}\int_{\mathbb{T}^{d}}J\frac{\partial}{\partial x_{i}}\rho d\mu dt,
$$
for every $J \in C^{0,1}([0,T]\times \mathbb{T}^{d})$ and $1 \le i \le d$.
\end{lem}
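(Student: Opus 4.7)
The plan is a duality argument: for each coordinate direction $i \in \{1,\ldots,d\}$, it suffices to show that, $Q^{*}$-almost surely, the linear functional $J \mapsto \int_{0}^{T}\int_{\mathbb{T}^{d}}(\partial_{i}J)\,\rho\,du\,dt$ is bounded on $L^{2}([0,T]\times\mathbb{T}^{d})$; the Riesz representation theorem then produces $g_{i} \in L^{2}$ which, by definition, is the weak derivative $\partial_{i}\rho$. To upgrade this to a $Q^{*}$-a.s.\ statement (rather than one only in expectation), I introduce the convex, lower-semicontinuous functional
$$
\Psi_{i}(\xi) := \sup_{J \in \mathcal{D}}\left\{\int_{0}^{T}\int_{\mathbb{T}^{d}}(\partial_{i}J)\,\xi_{t}(du)\,dt - C_{0}\int_{0}^{T}\!\!\int_{\mathbb{T}^{d}} J^{2}\,du\,dt\right\}
$$
over a countable dense subset $\mathcal{D} \subset C^{0,1}([0,T]\times\mathbb{T}^{d})$, and reduce the lemma to showing that $E_{Q^{*}}[\Psi_{i}] < \infty$ for $C_{0}$ large enough.

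To estimate this expectation I pass to $Q_{N}$ via lower semicontinuity (truncating to a finite family $\{J_{1},\ldots,J_{k}\}$ and using monotone convergence in $k$). On the discrete side,
$$
\int_{0}^{T}\int_{\mathbb{T}^{d}}(\partial_{i}J)\,d\xi_{N}(t)\,dt = \frac{1}{|V_{N}|}\int_{0}^{T}\sum_{x \in V_{N}}(\partial_{i}J)(t,\Phi_{N}(x))\,\eta_{x}(t)\,dt + o_{N},
$$
and the Taylor expansion $J(t,\Phi_{N}(te)) - J(t,\Phi_{N}(oe)) = N^{-1}\nabla_{{\bf v}(e)}J(t,\Phi_{N}(oe)) + O(N^{-2})$ together with the harmonicity identity $\sum_{e \in E_{x}}{\bf v}(e)=0$ allows a discrete integration by parts that rewrites the right-hand side as a sum of edge-gradients $N(\eta_{te}-\eta_{oe})$ weighted by values of $J$, with coefficients determined by ${\bf v}(e)$ and, after averaging over fundamental domains, by $\mathbb{D}$.

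The resulting edge-gradient expression is controlled by the entropy inequality and the Feynman--Kac formula, exactly as in the proof of Theorem \ref{super exponential estimate}. By Proposition \ref{Radon-Nikodym} it suffices to work under $\mathbb{P}_{N}^{eq}$; for $\alpha>0$,
$$
\log \mathbb{E}_{N}^{eq}\!\left[\exp\!\left(\alpha\int_{0}^{T}F_{N}^{J}(\eta_{t})\,dt\right)\right] \le T\,\lambda_{N,\alpha}^{J},
$$
where $F_{N}^{J}$ is the edge-gradient functional built from $J$ and $\lambda_{N,\alpha}^{J}$ is the largest eigenvalue of $N^{2}L_{N} + \alpha F_{N}^{J}$. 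The variational representation of this eigenvalue, combined with the Dirichlet-form lower bound $I_{N}(\mu) \ge \frac{1}{4}\sum_{e \in E_{N}}\int(\pi_{e}\sqrt{\phi})^{2}\,d\nu^{N}$ and a Cauchy--Schwarz-type manipulation (exploiting $|\int(\eta_{te}-\eta_{oe})\,d\mu| \le 2\sqrt{\int(\pi_{e}\sqrt{\phi})^{2}d\nu^{N}}$), produces an estimate of the form $\lambda_{N,\alpha}^{J} \le C(\alpha + \alpha^{2}\|J\|_{L^{2}}^{2})\,|V_{N}|$. An exponential Chebyshev bound together with a union bound over the finite family, optimized in $\alpha$, gives a uniform-in-$(N,k)$ bound on the expectation of the truncated supremum; passing to the limits $N \to \infty$ and $k \to \infty$ concludes $E_{Q^{*}}[\Psi_{i}] < \infty$, and the integration-by-parts identity in the lemma is the definition of the weak derivative $\partial_{i}\rho$.

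The main obstacle is the quadratic-in-$J$ control on $\lambda_{N,\alpha}^{J}$ via the Dirichlet form. Unlike the $\mathbb{Z}^{d}$ case, the discrete gradients in $F_{N}^{J}$ live along the lattice edge directions ${\bf v}(e)$, not along coordinate directions, so converting back to the coordinate derivatives $\partial_{i}J$ on the torus must be done through the harmonic embedding $\Phi$ and the averaging identity $\mathbb{D} = \frac{1}{4|V_{0}|}\sum_{e \in E_{0}}(v_{i}(e)v_{j}(e))_{i,j}$ which precisely encodes this inversion. As in the one- and two-blocks estimates, the local inhomogeneity of the crystal lattice forces one to perform these averages over fundamental domains under the $\Gamma$-action before invoking Cauchy--Schwarz, and verifying the resulting geometric constants are uniform in $N$ is where the crystal-lattice structure enters in an essential way.
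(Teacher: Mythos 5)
Your overall architecture matches the paper's: bound the discrete analogue of $\int_0^T\!\int(\partial_i J)\,\rho$ linearly in $\|J\|_{L^2}$ via the entropy inequality, the Feynman--Kac formula, the variational formula for the largest eigenvalue of $N^2L_N+\alpha F$, and a Cauchy--Schwarz estimate against the Dirichlet form; then take a supremum over a countable dense family of $J$'s and conclude by the Riesz representation theorem. That skeleton is correct and is exactly what the paper does.

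The genuine gap is in your discrete integration by parts. You propose to realize $\frac{1}{|V_N|}\sum_x(\partial_i J)(\Phi_N(x))\eta_x$ as a functional built from gradients $N(\eta_{te}-\eta_{oe})$ along actual edges of $X_N$, recovering $\partial_i$ from the directional derivatives $\nabla_{{\bf v}(e)}$ ``through the harmonic embedding and the averaging identity for $\mathbb{D}$''. Carrying out the summation by parts, the coefficient of $\eta_x$ in any such edge functional is, to leading order, $\sum_{e\in E_x}a_i(e)\nabla_{{\bf v}(e)}J(\Phi_N(x))$ for whatever weights $a_i(e)$ you choose; to produce $\partial_i J(\Phi_N(x))$ you therefore need, for \emph{each} vertex class of $V_0$ separately, weights with $\sum_{e\in E_x}a_i(e){\bf v}(e)$ equal to the $i$-th standard basis vector. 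This is impossible whenever the star $\{{\bf v}(e)\ |\ e\in E_x\}$ fails to span $\mathbb{R}^d$ --- for instance any degree-two vertex when $d=2$ (subdivide an edge of the hexagonal lattice: harmonicity places the new vertex at the midpoint, so its two edge vectors are antipodal and span only a line). Harmonicity gives $\sum_{e\in E_x}{\bf v}(e)=0$, which is a constraint rather than a spanning statement, and $\mathbb{D}$ is a sum over all of $E_0$, so it cannot supply the required per-vertex inversion; it governs the second-order term of the equation, not the first-order reconstruction. Averaging over fundamental domains does not repair this, because the weight attached to an edge at $x$ multiplies the difference $\eta_x-\eta_{te}$ at that particular vertex, not a block-averaged density.

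The paper's resolution is precisely the crystal-specific step your plan omits: it differences $\eta$ along the $\Gamma$-orbit directions, using the functional $F^{(i)}_{J,N}=\sum_{f\in E_N^S}\omega^{(i)}_f(\eta_{of}-\eta_{tf})$ built on the auxiliary lattice with $f=(x_0,\underline{\sigma_i}x_0)$. The displacement vectors $u_i=\psi(\sigma_i)$ form a basis of $\mathbb{R}^d$ by construction, so one obtains $\nabla_{u_i}\rho\in L^2$ for a full basis of directions and hence every coordinate partial derivative. The price is that $(x_0,\sigma_i x_0)$ is generally not an edge of $X$, so the exchange operator $\pi_{x_0,\sigma_i x_0}$ must be controlled by telescoping along a graph path; this is Lemma \ref{path} (with Lemma \ref{quasi-iso}), and it is there that the Dirichlet form re-enters with a constant depending only on $X_0$. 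If you replace your single-edge gradients by these orbit-direction gradients and insert the path estimate, the remainder of your argument goes through essentially as you describe.
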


\begin{proof}

For fixed $x_{0} \in V_{N}$, we define a lattice $X_{N}^{S}=(V_{N}^{S},E_{N}^{S})$ whose vertex set $V_{N}^{S}$ is the subset of $V_{N}$ in the following:
$V_{N}^{S}$ is the orbit of $x_{0}$ by $\Gamma_{N}$, i.e.,
$\Gamma_{N} x_{0} \subset V_{N}$.
Define $E_{N}^{S}$ the set of oriented edges $f$ such that $f=(x_{0}, \underline{\sigma}x_{0})$ for some $\underline{\sigma} \in \{\underline{\sigma}_{i},-\underline{\sigma}_{i} \ | \ 1 \le i \le d\}$.
Then $\Gamma_{N}$ acts on $X_{N}^{S}$ naturally.
A configuration $\eta$ on $X_{N}$ induces the one on $X_{N}^{S}$ by restriction.
We use the same symbol $\eta$ for this restriction.
For $J \in C^{1,2}([0,T]\times \mathbb{T}^{d})$ and for $i \in \{1,\dots,d\}$, we define
$\omega^{(i)}:E_{N}^{S} \to C^{1}([0,T],\mathbb{R})$,
$$
\omega^{(i)}_{f}:=
\begin{cases}
J(of) & \text{if there exists $\underline{\sigma} \in \Gamma_{N}$ such that $f=\underline{\sigma}(x_{0},\underline{\sigma_{i}}x_{0})$} \\
0	& \text{otherwise},
\end{cases}
$$
and 
$$
F_{J,N}^{(i)}(\eta):=\sum_{f \in E_{N}^{S}}\omega^{(i)}_{f}(\eta_{of}-\eta_{tf}).
$$
For any $\mu \in \mathcal{P}(Z_{N})$, we have that
\begin{align*}
\int_{Z_{N}}F_{J,N}^{(i)}d\mu
&= \int_{Z_{N}}\sum_{f \in E_{N}^{S}}\omega^{(i)}_{f}(\eta_{of}-\eta_{tf})\left(\sqrt{\frac{d\mu}{d\nu^{N}}}\right)^{2}(\eta)d\nu^{N} \\
&=- \int_{Z_{N}}\sum_{f \in E_{N}^{S}}\omega^{(i)}_{f}\eta_{of} \cdot \pi_{f}\left(\sqrt{\frac{d\mu}{d\nu^{N}}}\right)^{2}(\eta)d\nu^{N} \\
&=- \int_{Z_{N}}\sum_{f \in E_{N}^{S}}\omega^{(i)}_{f}\eta_{of} \cdot \left(\pi_{f} \sqrt{\frac{d\mu}{d\nu^{N}}}(\eta)\right)\cdot \left(\sqrt{\frac{d\mu}{d\nu^{N}}}(\eta^{f}) + \sqrt{\frac{d\mu}{d\nu^{N}}}(\eta)\right)d\nu^{N} \\
&=- \int_{Z_{N}}\sum_{f \in E_{N}^{S}}\omega^{(i)}_{f}\eta_{tf} \cdot \left(-\pi_{f} \sqrt{\frac{d\mu}{d\nu^{N}}}(\eta)\right)\cdot \sqrt{\frac{d\mu}{d\nu^{N}}}(\eta)d\nu^{N} \\
&\ \ \ \ \ \ \ \ \ \ \ \ \ \ \ \ \ \ \ \ \ \ \ \ \ \ \ \ \ \ \ \ - \int_{Z_{N}}\sum_{f \in E_{N}^{S}}\omega^{(i)}_{f}\eta_{of} \cdot \left(\pi_{f} \sqrt{\frac{d\mu}{d\nu^{N}}}(\eta)\right)\cdot \sqrt{\frac{d\mu}{d\nu^{N}}}(\eta)d\nu^{N} \\
&=- \int_{Z_{N}}\sum_{f \in E_{N}^{S}}\omega^{(i)}_{f}(\eta_{of}-\eta_{tf}) \cdot \left(\pi_{f} \sqrt{\frac{d\mu}{d\nu^{N}}}(\eta)\right)\cdot \sqrt{\frac{d\mu}{d\nu^{N}}}(\eta)d\nu^{N}  \\
&\le \sqrt{\int_{Z_{N}}\sum_{f \in E_{N}^{S}}(\omega_{f}^{(i)}(\eta_{of}-\eta_{tf}))^{2}\frac{d\mu}{d\nu^{N}}d\nu^{N}}\cdot\sqrt{\int_{Z_{N}}\sum_{f \in E_{N}^{S}}\left(\pi_{f}\sqrt{\frac{d\mu}{d\nu^{N}}}\right)^{2}d\nu^{N}}.
\end{align*}
We use the Cauchy-Schwarz inequality in the last inequality.
By using the argument in the proof of Lemma\ref{path}, there exists a constant $C$ such that
$$
\int_{Z_{N}}\sum_{f \in E_{N}^{S}}\left(\pi_{f}\sqrt{\frac{d\mu}{d\nu^{N}}}\right)^{2}d\nu^{N} \le C\int_{Z_{N}}\sum_{e \in E_{N}}\left(\pi_{e}\sqrt{\frac{d\mu}{d\nu^{N}}}\right)^{2}d\nu^{N}.
$$

Since 
$I_{N}(\mu)=\int_{Z_{N}}\sum_{e \in E_{N}}\left(\pi_{e}\sqrt{d\mu/d\nu^{N}}\right)^{2}d\nu^{N}$
and 
$\sum_{f \in E_{N}^{S}}(\omega^{(i)}_{f}(\eta_{of}-\eta_{tf}))^{2} \le 2\sum_{\underline{\sigma} \in \Gamma_{N}}J(\underline{\sigma}x_{0})^{2}$,
we get
$$
\int_{Z_{N}}F_{J,N}^{(i)}d\mu \le \sqrt{\sum_{\underline{\sigma} \in \Gamma_{N}}J(\underline{\sigma}x_{0})^{2}}\cdot \sqrt{16CI_{N}(\mu)}.
$$
Note that 
$\sum_{\underline{\sigma} \in \Gamma_{N}}J(\underline{\sigma}x_{0})^{2} \le 2 |\Gamma_{N}| \cdot \|J\|_{L^{2}(\mathbb{T}^{d})}^{2}$ 
for large enough $N$.
Consider for $a >0$ the self-adjoint operator 
$$
N^{2}L_{N} + a F_{N}^{(i)}: L^{2}(Z_{N},\nu^{N}) \to L^{2}(Z_{N},\nu^{N}),
$$
and suppose $\lambda_{N}^{(i)}(a)$ to be the largest eigenvalue of this operator.
By the variational formula
\begin{align*}
\lambda_{N}^{(i)}(a)&=\sup_{\mu \in \mathcal{P}(Z_{N})}\left\{a \int_{Z_{N}}F_{J,N}^{(i)}d\mu - N^{2}I_{N}(\mu)\right\} \\
&\le \sup_{\mu \in \mathcal{P}(Z_{N})}\left\{2a\sqrt{|\Gamma_{N}|\cdot \|J\|_{L^{2}(\mathbb{T}^{d})}^{2}}\cdot \sqrt{16CI_{N}(\mu)} - N^{2}I_{N}(\mu)\right\}.
\end{align*}

By the simple inequality for $p,q \ge 0$, $2pq - q^{2} \le p^{2}$,
the last formula is bounded by 
$a^{2}|\Gamma_{N}|\cdot \|J\|_{L^{2}(\mathbb{T}^{d})}^{2} \cdot (16C/N^{2})$.
We put $a=N$.
On the other hand,
$$
\frac{1}{|\Gamma_{N}|}NF_{J, N}^{(i)}=-\sum_{\underline{\sigma} \in \Gamma_{N}}\nabla_{u_{i}}J(o\underline{\sigma}(x_{0},\underline{\sigma_{i}}x_{0}))\cdot \eta_{o\underline{\sigma}(x_{0},\underline{\sigma_{i}}x_{0})} + o_{N},
$$
where $\nabla_{u_{i}}$ is the directional derivative along $u_{i}$ and $u_{i}=\psi(\sigma_{i}) \in \mathbb{R}^{d}$.

By the entropy inequality,
$$
\mathbb{E}_{N}\int_{0}^{T}NF_{J,N}^{(i)}dt \le \log \mathbb{E}_{N}^{eq}\exp\left\{N\int_{0}^{T}F_{J,N}^{(i)}dt\right\} +H(\mathbb{P}_{N} | \mathbb{P}_{N}^{eq}).
$$
Since $H(\mathbb{P}_{N} | \mathbb{P}_{N}^{eq}) \le |V_{N}|C'$ for some constant $C'$,
by the Feynman-Kac formula,
we obtain
$$
\limsup_{N \to \infty}\frac{1}{|\Gamma_{N}|}\mathbb{E}_{N}\int_{0}^{T}NF_{J,N}^{(i)}dt \le 16CT \|J\|_{L^{2}([0,T] \times \mathbb{T}^{d})}^{2} +|V_{0}|C'.
$$
For a limit point of $\{Q_{N}\}_{N\ge 1}$, $Q^{*}$, we get
$$
\mathbb{E}_{Q^{*}}\left[-\int_{0}^{T}\int_{\mathbb{T}^{d}}\nabla_{u_{i}}J\cdot \rho d\mu\right] \le 16CT \|J\|_{L^{2}([0,T] \times \mathbb{T}^{d})}^{2}+|V_{0}|C'.
$$
Denote a countable dense subset of $C^{0,1}([0,T]\times \mathbb{T}^{d})$ by $\mathcal{J}$,
we also get the following estimate:
$$
\mathbb{E}_{Q^{*}}\left[\sup_{J \in \mathcal{J}}\int_{0}^{T}\int_{\mathbb{T}^{d}}(-\nabla_{u_{i}}J)\rho d\mu dt -16CT\int_{0}^{T}\int_{\mathbb{T}^{d}}J^{2}d\mu dt \right] \le |V_{0}|C'.
$$
See $\cite{KL}$ pp.107, Section 5.7 for details.
Therefore for almost all $\rho$, there exists $B(\rho)$ such that for every $J \in C^{0,1}([0,T]\times \mathbb{T}^{d})$,
$$
\int_{0}^{T}\int_{\mathbb{T}^{d}}(-\nabla_{u_{i}}J)\rho d\mu dt -16CT\int_{0}^{T}\int_{\mathbb{T}^{d}}J^{2}d\mu dt \le B(\rho),
$$
that is,
$$
\Biggl| \int_{0}^{T}\int_{\mathbb{T}^{d}}(-\nabla_{u_{i}}J)\rho d\mu dt \Biggr| \le 2\sqrt{16CT\int_{\mathbb{T}^{d}}J^{2}d\mu dt}\cdot \sqrt{B(\rho)}.
$$
This implies the linear functional $l_{\rho}:C^{0,1}([0,T]\times \mathbb{T}^{d}) \to \mathbb{R}$ defined by
$l_{\rho}(J):=\int_{0}^{T}\int_{\mathbb{T}^{d}}(-\nabla_{u_{i}}J)\rho d\mu dt$ is extended on $L^{2}([0,T]\times\mathbb{T}^{d})$.
By the Riesz representation theorem, 
there exists $\nabla_{u_{i}}\rho \in L^{2}([0,T]\times\mathbb{T}^{d})$ such that
$$
\int_{0}^{T}\int_{\mathbb{T}^{d}}(-\nabla_{u_{i}}J)\rho d\mu dt=\int_{0}^{T}\int_{\mathbb{T}^{d}}J\nabla_{u_{i}}\rho d\mu dt
$$
for every $J \in C^{0,1}([0,T]\times \mathbb{T}^{d})$ and every $i=1, \dots, d$.
This yields Lemma\ref{energy}.
\end{proof}

\subsection{The uniqueness result}

We state the uniqueness result used in Section\ref{application}.
The following lemma follows from the argument by using the Grownwall inequality.

\begin{lem}\label{uniqueness}
For any $H \in C^{1,2}([0,T]\times \mathbb{T}^{d})$, a weak solution of the quasi-linear partial differential equation
$$
\frac{\partial}{\partial t}\rho =\nabla \mathbb{D} \nabla \rho -\frac{1}{2|V_{0}|}\sum_{e \in E_{0}}\nabla_{{\bf v}(e)}(\rho(1-\rho)\nabla_{{\bf v}(e)}H)
$$
with the measurable initial value $\rho_{0}:\mathbb{T}^{d} \to [0,1]$, of bounded energy, i.e, 
$$
\int_{0}^{T}\int_{\mathbb{T}^{d}}\|\nabla \rho\|^{2}d\mu dt < \infty
$$
is unique.
\end{lem}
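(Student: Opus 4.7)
The plan is a standard energy-estimate argument for linear-in-difference problems with a Lipschitz nonlinearity. Suppose $\rho_1,\rho_2$ are two weak solutions of bounded energy sharing the initial datum $\rho_0$, and set $w:=\rho_1-\rho_2$. Using $\rho_1(1-\rho_1)-\rho_2(1-\rho_2)=w\bigl(1-(\rho_1+\rho_2)\bigr)$, subtracting the two weak formulations yields, for any sufficiently regular test function $J$,
\begin{equation*}
\int_{\mathbb T^d} J(t)\,w(t)\,d\mu=\int_0^t\!\int_{\mathbb T^d}\Bigl(\partial_s J\cdot w+\nabla\mathbb D\nabla J\cdot w-\frac{1}{2|V_0|}\sum_{e\in E_0}\nabla_{\mathbf v(e)}J\cdot w(1-\rho_1-\rho_2)\nabla_{\mathbf v(e)}H\Bigr)d\mu\,ds.
\end{equation*}

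Next I would take $w$ itself as the test function. Since a priori $w\in L^2([0,T]\times\mathbb T^d)$ with $\nabla w\in L^2$ but no time regularity, I would justify this by a standard Steklov-averaging (or mollification in time) argument, as is routine in linear parabolic theory; the bounded-energy hypothesis is what makes this admissible. This yields the identity
\begin{equation*}
\frac{1}{2}\|w(t)\|_{L^2}^2+\int_0^t\!\int_{\mathbb T^d}\nabla w\cdot\mathbb D\nabla w\,d\mu\,ds=\frac{1}{2|V_0|}\sum_{e\in E_0}\int_0^t\!\int_{\mathbb T^d}\nabla_{\mathbf v(e)}w\cdot w(1-\rho_1-\rho_2)\nabla_{\mathbf v(e)}H\,d\mu\,ds,
\end{equation*}
where I used $w(0)=0$ and integration by parts on the drift term.

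Now I use that $\mathbb D$ is symmetric positive definite, so $\int\nabla w\cdot\mathbb D\nabla w\,d\mu\ge c_0\|\nabla w\|_{L^2}^2$ for some $c_0>0$. On the right-hand side, $|1-\rho_1-\rho_2|\le 3$ and $\nabla_{\mathbf v(e)}H$ is bounded uniformly since $H\in C^{1,2}([0,T]\times\mathbb T^d)$; applying Cauchy--Schwarz followed by Young's inequality $ab\le \frac{c_0}{2}a^2+\frac{1}{2c_0}b^2$ to split the product of $\nabla_{\mathbf v(e)}w$ and $w$ allows me to absorb the gradient term into the coercive diffusion contribution on the left. What remains is a bound
\begin{equation*}
\|w(t)\|_{L^2}^2\le C(H,\mathbb D,X_0)\int_0^t\|w(s)\|_{L^2}^2\,ds,
\end{equation*}
and Grönwall's inequality forces $w\equiv 0$.

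The main obstacle I expect is not the algebra but the functional-analytic justification of using $w$ as a test function: weak solutions in the sense of Lemma \ref{energy} have $\nabla\rho\in L^2$ but no prescribed time regularity for $\partial_t\rho$ beyond that dictated by the equation. The cleanest fix is the Steklov average $w_h(t)=h^{-1}\int_t^{t+h}w(s)\,ds$, which is differentiable in time and converges to $w$ in $L^2$ with $\nabla w_h\to\nabla w$; one tests against $w_h$, derives the identity above with an extra vanishing error, and passes to the limit $h\downarrow 0$. Once this is in place, everything else is routine and the Grönwall step closes the argument.
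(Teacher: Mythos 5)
Your argument is correct and is exactly the route the paper indicates: the paper gives no written proof of Lemma~\ref{uniqueness}, saying only that it ``follows from the argument by using the Gr\"onwall inequality,'' and your energy estimate --- subtracting the weak formulations, testing against $w=\rho_1-\rho_2$ via Steklov averaging, absorbing the drift term with Young's inequality into the coercive $\mathbb{D}$-term, and closing with Gr\"onwall --- is the standard realization of that claim. The only blemish is a harmless sign on the drift term after integration by parts on the torus (it should be $+$ rather than $-$), which is immaterial since you estimate that term in absolute value.
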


\textbf{Acknowledgement.}

First, the author would like to thank Professor Tsuyoshi Kato for his constant encouragement and helpful suggestions.
He wishes to express his gratitude to Professors Motoko Kotani, Nobuaki Sugimine, Satoshi Ishiwata and Makiko Sasada for their valuable advice and helpful discussions. 
Second, he would like to thank Professor Yukio Nagahata for his helpful comments on this subject and the background of the hydrodynamic limit.
Third, he would like to thank Doctors Sei-ichiro Kusuoka and Makoto Nakashima for their advices on the presentation of this paper.
Fourth, he would like to thank Professor Kazumasa Kuwada for his valuable comments on the earlier version of the manuscript and his encouragement. 
The author could not write up this paper without a great deal of his advice.
Fifth,  the author partially carried out this work at RIKEN Center for Developmental Biology in Kobe.
He would like to thank Professors Hiroki R. Ueda and Yohei Koyama for their interest in this topic and their encouragement.
Sixth, the author partially carried out this work at Max Planck Institute for Mathematics in the Sciences in Leipzig.
He would like to thank Professors J\"{u}rgen Jost and Nihat Ay for helpful discussions.
The author is supported by the Research Fellowships of the Japan Society for the Promotion of Science for Young Scientists.

\end{document}